\newcommand\bbR{\mathbb{R}}
\newcommand\bbS{\mathbb{S}}
\newcommand\bbM{\mathbb{M}}
\newcommand\cA{\mathcal{A}}
\newcommand\cC{\mathcal{C}}
\newcommand\cF{\mathcal{F}}
\newcommand\cH{\mathcal{H}}
\newcommand\cP{\mathcal{P}}
\newcommand\cT{\mathcal{T}}
\newcommand\cV{\mathcal{V}}
\newcommand\cW{\mathcal{W}}
\newcommand\cX{\mathcal{X}}
\newcommand\bn{\boldsymbol{n}}
\newcommand\bF{\boldsymbol{f}}
\newcommand\bu{\boldsymbol{u}}
\newcommand\bv{\boldsymbol{v}}
\newcommand\bw{\boldsymbol{w}}
\newcommand\bd{\boldsymbol{d}}
\newcommand\be{\boldsymbol{e}}
\newcommand\bq{\boldsymbol{q}}
\newcommand\bg{\boldsymbol{g}}
\newcommand\bI{\boldsymbol{I}}
\newcommand\beps{\boldsymbol{\varepsilon}}
\newcommand\bsig{\boldsymbol{\sigma}}
\newcommand\btau{\boldsymbol{\tau}}
\newcommand\bnabla{\boldsymbol{\nabla}}
\newcommand\bet{\boldsymbol{\eta}}
\newcommand\bchi{\boldsymbol{\chi}}
\newcommand\bpi{\boldsymbol{\pi}}
\newcommand\ubu{\underline{\boldsymbol{u}}}
\newcommand\ubv{\underline{\boldsymbol{v}}}
\newcommand\ube{\underline{\boldsymbol{e}}}
\newcommand\ubq{\underline{\boldsymbol{q}}}
\newcommand\mt{\mathtt{t}}
\newcommand\tD{\mathtt{D}}
\newcommand{\tr}{\operatorname{tr}}
\renewcommand{\div}{\operatorname{div}}
\newcommand{\bdiv}{\operatorname{\mathbf{div}}}
\newcommand{\jump}[1]{{\llbracket{#1}\rrbracket}}
\newcommand{\mmean}[1]{\left\{\kern-1.ex\left\{ #1 \right\}\kern-1.ex\right\}}	
\newcommand{\mean}[1]{\left\{ #1 \right\}}
\DeclarePairedDelimiterX\norm[1]\lVert\rVert{
   \ifblank{#1}{\:\cdot\:}{#1}
}
\DeclarePairedDelimiterX\abs[1]\lvert\rvert{
   \ifblank{#1}{\:\cdot\:}{#1}
}
\DeclarePairedDelimiter{\inner}{(}{)}
\DeclarePairedDelimiter{\set}{\{}{\}}
\DeclarePairedDelimiter{\dual}{\langle}{\rangle}
\DeclareMathOperator*{\esssup}{ess\,sup}
\titleformat{\section}[block]{\filcenter\bfseries}{\thesection.}{1em}{}
\titleformat{\subsection}[block]{\bfseries}{\thesubsection.}{1em}{}
\newtheoremstyle{paper}{}{}{\itshape}{}{\scshape}{.}{.5em}{}
\theoremstyle{paper}
\newtheorem{theorem}{Theorem}
\newtheorem{proposition}{Proposition}
\newtheorem{lemma}{Lemma}
\newtheorem{remark}{Remark}
\begin{document}
\title{\huge An $hp$ Error Analysis of HDG for Linear Fluid-Structure Interaction}
\author{Salim Meddahi
\thanks{This research was  supported by Ministerio de Ciencia e Innovación, Spain, Project PID2020-116287GB-I00.}}



\maketitle

\begin{abstract}

\noindent 
A variational formulation based on velocity and stress is developed for linear fluid-structure interaction (FSI) problems. The well-posedness and energy stability of this formulation are established. To discretize the problem, a hybridizable discontinuous Galerkin method is employed. An $hp$-convergence analysis is performed for the resulting semi-discrete scheme. The temporal discretization is achieved via the Crank-Nicolson method, and the convergence properties of the fully discrete scheme are examined. Numerical experiments validate the theoretical results, confirming the effectiveness and accuracy of the proposed method.

\end{abstract}
\bigskip

\noindent
\textbf{Mathematics Subject Classification.} 65N30, 65N12, 76S05, 76D07
\bigskip

\noindent
\textbf{Keywords.}
Fluid-structure interaction, hybridizable discontinuous Galerkin, $hp$ error estimates


\section{Introduction}\label{s:introduction}

Fluid-structure interaction (FSI) phenomena are prevalent across various scientific and engineering disciplines, including blood flow in arteries, aerodynamic analyses, and structural stability assessments \cite{chabannes2013, quarteroni2019, bazilevs2011, zhang2013}. Numerical solutions for FSI problems pose a significant challenge due to the inherent coupling between fluid and solid domains, nonlinearities in the governing equations, and the necessity for accurate treatment at the fluid-structure interface.

Numerical strategies for addressing FSI problems include monolithic and partitioned schemes \cite{richter2017}. Monolithic schemes \cite{bazilevs2008,gee2011, gerbeau2003, hron2006, nobile2001} solve the coupled fluid-structure system of equations simultaneously, offering improved robustness but requiring extensive computational resources \cite{heil2008, badia2008}. Partitioned approaches  solve the fluid and structure equations independently with explicit enforcement of interface conditions. These schemes can be computationally efficient but may face stability problems \cite{badia2009,banks2014, bukac2016, fernandez2013, nobile2008}.

This study adopts a monolithic approach for solving the FSI problem. To isolate the core contribution of this work and avoid complexities associated with nonlinear FSI models, we restrict our analysis to a simplified model involving  an incompressible, viscous fluid interacting with a linearly elastic solid. The governing equations are the Stokes equations for the fluid and the linear elastodynamics equations for the solid, with appropriate interface conditions to govern the interaction. 

Traditionally, FSI formulations are written in terms of the displacement in solids and the velocity pressure variables in fluids. This study introduces a  velocity-stress based formulation for linear FSI problems involving thick structures. This formulation employs global energy spaces throughout the entire domain, which simplifies treatment at the interface conditions on the discrete level. Notably, previous works \cite{selgas2018,bean2019} have also considered stress as a primary variable in FSI formulations, laying groundwork for this approach.

To manage the potential increased computational demand of this monolithic, tensorial-based variational formulation, we apply a hybridizable discontinuous Galerkin (HDG) method \cite{Cockburn2009, duSayasBook}. The hybridization technique reduces global degrees of freedom, facilitating efficient implementation through static condensation and enabling effective parallel processing, which makes it attractive for computationally demanding problems. The HDG method is also amenable to  $hp$-adaptivity and flexible mesh designs. It has shown promise in various applications such as elastodynamics and Stokes flow \cite{nguyen2011, cockburnFu2018, duSayas2020, nguyenPeraire2010, cockburnSayas2014}, and its adaptation to the FSI velocity-pressure-displacement formulation has seen recent developments \cite{sheldon2016,Fu2022}. 

The presented HDG discretization method is specifically designed for the velocity-stress formulation of the linear FSI problem, utilizing symmetric tensors with piecewise polynomial entries of arbitrary degree $k \geq 0$ to approximate each stress component in both 2D and 3D. Additionally, the discrete velocity field and the discrete trace variable defined on the mesh skeleton are piecewise polynomials of degree $k+1$. The stability and convergence of the semi-discrete scheme are proven, alongside obtaining $hp$ error estimates for stress and velocity in the corresponding $L^2$-norms. Convergence rates are shown to be quasi-optimal with respect to mesh size, while only suboptimal by half a power concerning polynomial degree. Moreover, the fully discrete scheme based on the Crank-Nicolson method is demonstrated to be stable and convergent.

This paper is structured as follows. We first complement this section with notations and definitions of functional spaces. Section~\ref{sec:model} introduces the linear FSI model problem and its corresponding stress/velocity weak formulation.  Existence and uniqueness of a solution are established in Section~\ref{sec:wellposedness}.  Section~\ref{sec:FE} reviews essential $hp$-requirements for our analysis. Section~\ref{sec:semi-discrete} introduces the semi-discrete HDG method and proves its well-posedness.  Section~\ref{sec:convergence} details the $hp$ convergence analysis of the HDG method.  The fully discrete scheme is addressed in Section~\ref{sec:fully-discrete}. Finally, numerical results validating expected convergence rates in both two and three dimensions are presented in Section~\ref{sec:numresults}.

\subsection{Notations and Sobolev spaces}

Let $\mathbb{M}$ represent the space of real matrices with dimensions $d\times d$. We denote by $\bbS:= \set{\btau\in \bbM;\ \btau = \btau^{\mt}}$ the subspace of symmetric matrices, where $\btau^{\mt}:=(\tau_{ji})$ is the transpose of $\btau = (\tau_{ij})$. The component-wise inner product of two matrices $\bsig, \,\btau \in\bbM$ is given by $\bsig:\btau:= \sum_{i,j}\sigma_{ij}\tau_{ij}$. Additionally, we introduce the deviatoric part $\btau^{\tD}:=\btau-\frac{1}{d}\left(\tr\btau\right) \bI$ of a tensor $\btau$, where $\tr\btau:=\sum_{i=1}^d\tau_{ii}$ and $\bI$ represents the identity in $\bbM$.

Let $D$ be a polyhedral Lipschitz bounded domain of $\bbR^d$ $(d=2,3)$, with boundary $\partial D$. Throughout this work, we apply all differential operators row-wise.  Hence, given a tensorial function $\bsig:D\to \bbM$ and a vector field $\bu:D\to \bbR^d$, we set the divergence $\bdiv \bsig:D \to \bbR^d$, the   gradient $\bnabla \bu:D \to \bbM$, and the linearized strain tensor $\beps(\bu) : D \to \bbS$ as
\[
(\bdiv \bsig)_i := \sum_j   \partial_j \sigma_{ij} \,, \quad (\bnabla \bu)_{ij} := \partial_j u_i\,,
\quad\hbox{and}\quad \beps(\bu) := \frac{1}{2}\left[\bnabla\bu+(\bnabla\bu)^{\mt}\right].
\]
 For $s\in \bbR$, $H^s(D, E)$ stands for the usual Hilbertian Sobolev space of functions with domain $D$ and values in $E$, where $E$ is either $\bbR$, $\bbR^d$ or $\bbS$. In the case $E=\bbR$ we simply write $H^s(D)$. The norm of $H^s(D,E)$ is denoted $\norm{\cdot}_{s,D}$ and the corresponding semi-norm $|\cdot|_{s,D}$, indistinctly for $E=\bbR,\bbR^d,\bbS$. We use the convention  $H^0(D, E):=L^2(D,E)$ and let $(\cdot, \cdot)_D$ be the inner product in $L^2(D, E)$, for $E\in \set{\bbR,\bbR^d,\bbS}$; namely,
\begin{equation*}
	(\bu, \bv)_D:=\int_D \bu\cdot\bv,\ \forall \bu,\bv\in L^2(D,\bbR^d),\quad  (\bsig, \btau)_D:=\int_D \bsig:\btau,\ \forall \bsig, \btau\in L^2(D,\bbS).
\end{equation*}
The space of tensors in $L^2(D, \bbS)$ with divergence in $L^2(D, \bbR^d)$ is denoted $H(\bdiv, D, \bbS)$. The corresponding norm is given by $\norm{\btau}^2_{H(\div,D)}:=\norm{\btau}_{0,D}^2+\norm{\bdiv\btau}^2_{0,D}$. Let $\bn$ be the outward unit normal vector to $\partial D$, the Green formula
\begin{equation}\label{green}
	(\btau, \beps(\bv))_D + (\bdiv \btau, \bv)_D = \int_{\partial D} \btau\bn\cdot \bv\qquad  \forall \bv \in H^1(D,\bbR^d),
\end{equation}
can be used to extend the normal trace operator $\btau \to (\btau|_{\partial D})\bn$ to a linear continuous mapping $(\cdot|_{\partial D})\bn:\, H(\bdiv, D, \bbS) \to H^{-\frac{1}{2}}(\partial D, \bbR^d)$, where $H^{-\frac{1}{2}}(\partial D, \bbR^d)$ is the dual of $H^{\frac{1}{2}}(\partial D, \bbR^d)$.

Since we are dealing with an evolution problem, besides the Sobolev spaces defined above, we need to introduce function spaces defined over a bounded time interval $(0, T)$ and taking values in a separable Banach space $V$, whose norm is denoted $\norm{\cdot}_{V}$.  For $1 \leq p \leq\infty$, $L^p_{[0,T]}(V)$ represents the space of B\"ochner-measurable functions $f: (0,T) \to V$ with finite norms, given by
\[
\norm{f}^p_{L^p_{[0,T]}(V)}:= \int_0^T\norm{f(t)}_{V}^p\, \text{d}t \quad\hbox{for $1\leq p < \infty$ and} \quad \norm{f}_{L_{[0,T]}^\infty(V)}:= \esssup_{[0, T]} \norm{f(t)}_V.
\]

We denote the Banach space of continuous functions $f: [0,T] \to V$ by $\mathcal{C}^0_{[0,T]}(V)$. Moreover, for any $k \in \mathbb{N}$, $\mathcal{C}^k_{[0,T]}(V)$ represents the subspace of $\mathcal{C}^0_{[0,T]}(V)$ consisting of functions $f$ with strong derivatives $\frac{d^j f}{dt^j}$ in $\mathcal{C}^0_{[0,T]}(V)$ for all $1 \leq j \leq k$. In what follows, we will interchangeably use the notations $\dot{f}:= \frac{d f}{dt}$, $\ddot{f}:= \frac{d^2 f}{dt^2} $, and $\dddot{f}:= \frac{d^3 f}{dt^3} $ to denote the first, second, and third derivatives with respect to $t$. 

Finally, we introduce the Sobolev space
\[
\begin{array}{c}
W_{[0,T]}^{1, p}(V):= \left\{f: \ \exists g\in L_{[0,T]}^p( V)
\ \text{and}\ \exists f_0\in V\ \text{such that} \ f(t) = f_0 + \int_0^t g(s)\, \text{d}s\quad \forall t\in [0,T]\right\},
\end{array}
\]
and denote $H^1_{[0,T]}(V):= W_{[0,T]}^{1,2}(V)$. The space $W_{[0,T]}^{k, p}(V)$ is defined recursively  for all $k\in\mathbb{N}$.

Throughout the rest of this paper, we shall use the letter $C$  to denote generic positive constants independent of the mesh size  $h$ and the polynomial degree $k$ and the time step $\Delta t$. These constants may stand for different values at different occurrences. Moreover, given any positive expressions $X$ and $Y$ depending on $h$ and $k$, the notation $X \,\lesssim\, Y$  means that $X \,\le\, C\, Y$.

\section{Velocity/stress formulation of a linear FSI problem}\label{sec:model}

This section presents a simplified model for a fluid-structure interaction problem, focusing on a global stress tensor description. We neglect convective terms in the fluid and assume small deformations, enabling the consideration of fully linear models within time-independent fluid and solid subdomains. We consider a polygonal/polyhedral domain $\Omega\subset \bbR^d$, $d=2,3$, divided into two polygonal/polyhedral subdomains $\Omega_f$ and $\Omega_s$ representing the fluid and solid regions respectively. In this way, the global domain $\Omega = \Omega_f \cup \Sigma\cup \Omega_s$ comprises three disjoint components, where $\Sigma:= \overline{\Omega}_f \cap  \overline{\Omega}_s$ is the interface connecting the fluid to the structure.

We assume that $\Omega_s$ represents an elastic body with mass density $\rho_s>0$, governed by the equation of motion:
\begin{equation}\label{motion}
	\rho_s\ddot{\bd}-\bdiv\bsig_s =\bF_s \quad \text{in $\Omega_s\times (0, T]$},
\end{equation}
where $\bF_s:\Omega_s\times[0, T] \to \bbR^d$ is a body force acting over the time interval $[0,T]$. The vector field  $\bd:\Omega\times[0, T] \to \bbR^d$ is the displacement and $\bsig_s:\Omega_s\times[0,T]\to \bbS$ is the stress tensor. The linearized strain tensor $\beps(\bd)$ is related to the stress through Hooke's law
\begin{equation}\label{Constitutive:solid}
	\bsig_s  = \cC_s \beps(\bd)  \quad \text{in $\Omega_s\times (0, T]$},
\end{equation}
where $\cC_s$ is a symmetric and positive definite constant tensor of order 4.

We consider the partition $\partial \Omega_s = \Gamma_s \cup \Sigma$ of the solid boundary. To simplify the exposition, we assume that the elastic body is clamped ($\bd = \mathbf 0$) at $\Gamma_s  \times (0, T]$,  where the boundary subset $\Gamma_s$ is of positive surface measure.  Finally, we impose the initial conditions 
\begin{equation}\label{inits}
	\bd(0) = \bd^0 \quad\text{in $\Omega_s$} \quad \text{and} \quad 
 \dot{\bd}(0) = \bd^1 \quad\text{in $\Omega_s$}.
\end{equation}

In the subdomain $\Omega_f$ a fluid with mass density $\rho_f>0$ and velocity field $\bu_f:\Omega_f \to \bbR^d$ is governed by the time-dependent Stokes equations, 
\begin{equation}\label{Stokes}
	\begin{split}
		\rho_{f}\dot{\bu}_f - \bdiv\bsig_f &=\bF_f \quad \text{in $\Omega_f\times (0, T]$},
		\\
		\div \bu_f &= 0 \quad \text{in $\Omega_f\times (0, T]$},
	\end{split}
\end{equation}
where $\bF_f:\Omega_f\to \bbR^d$ is the body force and the stress tensor $\bsig_{\!f}$ is expressed in terms $\bu_f$ and the pressure $p:\Omega_f\to \bbR$ as
\begin{equation}\label{Constitutive:fluid}
	\bsig_f = 2\mu_f \beps(\bu_f) - p \bI \quad \text{in $\Omega_f\times (0, T]$}.
\end{equation}
Here, $\mu_f>0$ is the dynamic viscosity of the fluid. We partition the fluid boundary $\partial \Omega_f$ into $\Gamma_f \cup \Sigma$  and  enforce the no-slip boundary condition $\mathbf{u}_f = \mathbf{0}$ on $\Gamma_f \times (0, T]$. To complete the Stokes system, we incorporate the initial condition:
\begin{equation}\label{initf}
	\bu_f(0) = \bu_f^0 \quad\text{in $\Omega_f$}.
\end{equation}

In what follows, we consider a penalty formulation for the Stokes problem \eqref{Stokes}-\eqref{Constitutive:fluid}. Namely, the incompressibility constraint is enforced by penalization through the equation $\div \bu_f = -\frac{1}{\lambda_f} p$, with a sufficiently large coefficient $\lambda_f>0$ that plays the role of a penalty parameter. This amounts to consider the constitutive relation $\bsig_f = \cC_f \beps(\bu_f)$, where $\cC_f\beps(\bu_f)  := 2\mu_f \beps(\bu_f) + \lambda_f\tr \beps(\bu_f) \bI$. In what follows, for brevity, we continue to denote the solutions obtained through penalization as $\bu_f$ and $p$.

The formulation of the fluid-structure interaction problem involves coupling the fluid and solid problems through two transmission conditions enforced on the shared interface $\Sigma$. The first condition, known as the kinematic condition, ensures the continuity of velocities between the fluid and solid particles across the interface. It is expressed as $\bu_f = \dot\bd$ on $\Sigma\times (0, T]$. The second condition, referred to as the dynamic condition, imposes the continuity of normal components of stress tensors between the fluid and solid across the interface. This condition is mathematically represented as $\bsig_f\bn_f + \bsig_s\bn_s = \mathbf 0$ on $\Sigma\times (0, T]$, where $\bn_s$ and $\bn_f$ stand for the unit outward normal vectors to the boundaries $\partial\Omega_s$ and $\partial\Omega_f$, respectively.
 
Our objective is to treat the fluid and solid stress tensors as primary unknowns.  Introducing the notation $\cA_s:= \cC_s^{-1}$, the elasticity problem can be formulated as  
 \begin{align}\label{stressFormulation:solid}
 \begin{split}
 \rho_s\dot{\bu}_s-\bdiv \bsig_s &=\bF_s  \quad\text{in $\Omega_s\times (0, T]$}, 
 \\
 \cA_s \dot{\bsig}_s &= \beps(\bu_s)  \quad \text{in $\Omega_s\times (0, T]$}, 
 \\
 \bu_s &= \mathbf 0
 \quad  \text{on $\Gamma_s\times (0, T]$},
\end{split}
\end{align}
in terms of the velocity $\bu_s = \dot\bd$, and $\bsig_s$. 

In the fluid domain, we can combine \eqref{Constitutive:fluid} and the approximate incompressibility condition to eliminate the pressure $p$ and end up with the boundary value problem
\begin{align}\label{stressFormulation:fluid}
	\begin{split}
	\rho_f \dot\bu_f -  \bdiv \bsig_f &= \bF_f \quad \text{in $\Omega_f\times (0, T]$},
	\\
		\cA_f \bsig_f &=  \beps(\bu_f)\quad \quad \text{in $\Omega_f\times (0, T]$},
	\\
	\bu_f &= \mathbf{0} \quad \quad \text{on $\Gamma_f\times (0, T]$},
	\end{split}	
\end{align}
where $\cA_f:= \cC_f^{-1}$ is given by $\cA_f\btau  = \frac{1}{2\mu_f} \left( \btau - \frac{\lambda_f}{d \lambda_f + 2 \mu_f} \tr \btau \bI\right)$. The transmission conditions  are expressed as
\begin{align}\label{transmission}
	\begin{split}
		\bsig_s\bn_s + \bsig_f\bn_f &= \mathbf 0\quad \text{on $\Sigma\times (0, T]$}, 
	\\
	\bu_s &= \bu_f \quad \text{in $\Sigma\times (0, T]$}.
	\end{split}
\end{align}

We introduce a global stress variable $\bsig:\Omega \to \bbS$, which is defined in terms of the elastic and fluid components of the stress as  $\bsig|_{\Omega_s} :=\bsig_s$ and $\bsig|_{\Omega_f}:= \bsig_f$, respectively. We extend this convention to arbitrary elements $\btau \in L^2(\Omega,\bbS)$  and let $\btau_s := \btau|_{\Omega_s}$ and $\btau_f:= \btau|_{\Omega_f}$. The subscripts $s$ and $f$ will be omitted when the context is clear. 

We introduce the space $\cH := L^2(\Omega,\bbS)$ equipped with the inner product 
\[
	\inner{\bsig, \btau}_\cH := \inner{\cA_s\bsig, \btau}_{\Omega_s} + \inner{ \cA_f \bsig, \btau}_{\Omega_f},
\]
and denote $\norm{\btau}_{\cH}^2:= \norm{\cA_s^{\sfrac12}\btau}^2_{0,\Omega_s} + \norm{\cA_f^{\sfrac12}\btau}^2_{0,\Omega_f}$ the corresponding norm, where $\norm{\cA_s^{\sfrac12}\btau}^2_{0,\Omega_s}:= \inner{\cA_s\btau, \btau}_{\Omega_s}$ and $\norm{\cA_f^{\sfrac12}\btau}^2_{0,\Omega_f}:=\inner{ \cA_f \btau, \btau}_{\Omega_f}$.  We point out that, according to our assumption on the tensor $\cC_s$,   there exist  constants $a_s^+>0$ and $a_s^->0$ such that
\begin{equation}\label{normHs}
	a_s^+ \norm{\btau}^2_{0, \Omega_s}  \leq \norm{\cA_s^{\sfrac12}\btau}^2_{0,\Omega_s} \leq a_s^- \norm{\btau}^2_{0, \Omega_s}  \quad \forall \btau \in L^2(\Omega_s,\bbS).
\end{equation}
Moreover, taking into account that 
\(
\inner{\cA_f \bsig,\btau}_{\Omega_{f}} = \frac{1}{2\mu_f} \inner{\bsig^\tD,\btau^\tD}_{\Omega_{f}} + \frac{1}{d \lambda_f + 2 \mu_f} \inner{\tr \bsig, \tr \btau}_{\Omega_{f}},
\)
and using the identity $\norm{\btau}^2_{0,\Omega_f} = \norm{\btau^\tD}^2_{0,\Omega_f} + \tfrac{1}{d} \norm{\tr \btau}^2_{0,\Omega_f}$ we obtain the bounds 
\begin{equation}\label{normHf}
	a_f^- \norm{\btau}^2_{0, \Omega_f}  \leq \norm{\cA_f^{\sfrac12}\btau}^2_{0,\Omega_f} \leq a_f^+ \norm{\btau}^2_{0, \Omega_f}  \quad \forall \btau \in L^2(\Omega_f, \bbS),
\end{equation}
with $a_f^+:= \frac{d}{2\mu_f}$ and $a_f^-:= \max\{ \frac{1}{2\mu_f}, \frac{d}{d\lambda_f + 2\mu_f} \}$.

Owing to the dynamic transmission condition, the global stress $\bsig(t)\colon \Omega \to \bbS$ belongs to the energy space $H(\bdiv,\Omega, \bbS)$. Alongside the tensorial variable $\bsig$, we define $\bu$ as the global velocity field, with $\bu|_{\Omega_s}:= \bu_s$ and $\bu|_{\Omega_f}:= \bu_f$. Additionally, we introduce the coefficient $\rho:\Omega\to \mathbb{R}$ and the function $\bF:\Omega\times [0, T] \to \mathbb{R}^d$, where their restrictions to $\Omega_s$ and $\Omega_f$ coincide with the mass density and the body force of each respective domain. With these definitions at hand, we deduce from the first equations of \eqref{stressFormulation:solid} and \eqref{stressFormulation:fluid} that
\begin{equation}\label{global1}
	\rho\dot{\bu} =  \bdiv \bsig  + \bF  \quad\text{in $\Omega\times (0, T]$}.
\end{equation}

Let us now consider an arbitrary element $\btau\in H(\bdiv,\Omega, \bbS)$. Testing the second equation in \eqref{stressFormulation:solid} with $\btau_s=\btau|_{\Omega_s}$ and using Green's formula \eqref{green} we  obtain
\begin{equation}\label{var:solid}
	\inner{\cA_s\dot\bsig, \btau}_{\Omega_s}  = \inner*{\beps(\dot{\bu}_s), \btau }_{\Omega_s} 
		= - \inner*{\bu_s, \bdiv\btau  }_{\Omega_s} + \dual*{\bu_s, \btau\bn_s}_{\partial\Omega_s},
\end{equation}
where $\dual{\cdot, \cdot}_{\partial\Omega_s}$ stands for the duality pairing between $H^{-\frac{1}{2}}(\partial\Omega_s, \bbR^d)$ and $H^{\frac{1}{2}}(\partial\Omega_s, \bbR^d)$.
Next, testing the second equation in \eqref{stressFormulation:fluid} with $\btau_f= \btau|_{\Omega_f}$ and integrating by parts yield 
\begin{equation}\label{var:fluid}
	\inner{\cA_f\bsig, \btau}_{\Omega_f} = \inner{\beps(\bu_f), \btau }_{\Omega_f} = - \inner*{\bu_f, \bdiv \btau}_{\Omega_f}+ \dual{\bu_f, \btau\bn_f}_{\partial\Omega_f}.
\end{equation}
Incorporating the boundary conditions on $\Gamma_s$ and $\Gamma_f$, along with the transmission condition $\mathbf{u}_s = \mathbf{u}_f$ on $\Sigma$, and combining equations \eqref{var:solid} and \eqref{var:fluid}, yields
\begin{equation}\label{global2}
	\inner{\cA_s\dot\bsig, \btau}_{\Omega_s} + \inner{\cA_f\bsig, \btau}_{\Omega_f}  + \inner{\bu , \bdiv\btau }_\Omega = 0,\quad \forall \btau\in H(\bdiv,\Omega, \bbS).
\end{equation}

We introduce the function $\bu^0\colon \Omega \to \bbR^d$ defined by $\bu^0|_{\Omega_s} := \bd^1$ and $\bu^0|_{\Omega_f} := \bu_f^0$. We also let $\bsig_s^0:=\cC_s\beps(\bd^0)$, $\bsig_f^0:=\cC_f\beps(\bu_f^0)$, and $\bsig_s^1:=\cC_s\beps(\bd^1)$ and introduce the global tensor $\bsig^0\colon \Omega \to \bbS$ given by  $\bsig^0|_{\Omega_s} = \bsig_s^0$ and $\bsig^0|_{\Omega_f} = \bsig_f^0$. In what follows, we will need the following regularity assumptions on the data:
\begin{align}\label{dataregularity}
	\begin{split}
		\bF &\in W^{1,1}_{[0,T]}(L^2(\Omega, \bbR^d)), \quad  \bu^0 \in L^2(\Omega, \bbR^d), \quad \bsig_s^1 \in L^2(\Omega_s,\bbS),
		\\
		 &  \quad\text{and}\quad   \bsig^0 \in H(\bdiv,\Omega,\bbS)\cap H^r(\Omega_s\cup \Omega_f,\bbS), \quad   r> 1/2.
	\end{split}
\end{align}

We deduce from \eqref{global1} and \eqref{global2} the following  velocity/stress variational formulation of the fluid-solid problem: find  $\bu\in W^{1,\infty}_{[0,T]}(L^2(\Omega,\bbR^d))\cap L^2_{[0,T]}(H^1_0(\Omega,\bbR^d))$ and $\bsig \in H^1_{[0,T]}(\cH) \cap L^2_{[0,T]} ( H(\bdiv,\Omega, \bbS))$ such that 
\begin{align}\label{var:golbal}
	\begin{split}
		\inner{\rho\dot\bu, \bv}_{\Omega} - \inner{\bdiv \bsig, \bv}_{\Omega} &= \inner{\bF, \bv}_{\Omega}  \quad \forall \bv\in L^2(\Omega, \bbR^d),
		\\
		\inner{\cA_s\dot\bsig, \btau}_{\Omega_s} + \inner{\cA_f\bsig, \btau}_{\Omega_f}  + \inner{\bu , \bdiv\btau }_\Omega &= 0,\quad \forall \btau\in H(\bdiv,\Omega, \bbS),
	\end{split}
\end{align}
with the initial conditions 
\begin{equation}\label{init0}
	\bu(0) = \bu^0\quad \text{and} \quad \bsig_s(0)=  \bsig_s^0. 
\end{equation}

\begin{remark}
	For simplicity, we have employed homogeneous Dirichlet boundary conditions on all boundaries. It is important to acknowledge that practical applications often involve non-homogeneous or Neumann boundary conditions for the fluid and solid boundary domains. These more general scenarios are considered in the numerical results presented in Section~\ref{sec:numresults}.
\end{remark}
\section{Well-posedness of the  weak formulation}\label{sec:wellposedness}

Let $\set{\cT^c_h}$ denote a family of conforming and shape-regular partitions of $\overline\Omega$ into tetrahedra if $d = 3$ and into triangles if $d = 2$. We assume that the partitions $\cT^c_h$ are aligned with the interface $\Sigma$ separating the fluid and solid domains. For all $K\in \cT_h^c$, we let $\cP_m(K, E)$ be the space of polynomials of degree at most $m \geq 0$ on $K$ with values in $E\in \set{\bbS, \bbR^d}$. The couple of finite-dimensional spaces   
\[
\cX^{AW}_K := \set{\btau \in \cP_{d+1}(K,\bbS);\ \bdiv \btau \in \cP_1(T,\bbR^d)} \quad \text{and} \quad \cP_{1}(K,\bbR^d)
\]
represent the lowest order finite element introduced in \cite{Arnold2002, Arnold2008} for the mixed formulation of the elasticity system. This finite element method is only needed as a theoretical tool to establish the well-posedness of the fluid-solid stress formulation \eqref{var:golbal}-\eqref{init0}. Detailed information regarding the degrees of freedom associated with $\cW^{AW}_K$ and the construction of the corresponding interpolation operator $\Pi^{AW}_K$ can be found in  \cite{Arnold2002, Arnold2008}. We define the global finite element spaces 
\begin{align*}
	\cX^{AW}_h &:= \set{\btau \in H(\bdiv, \Omega, \bbS);\ \btau|_K \in \cW^{AW}_K \quad \forall K\in \cT^c_h}
	\\
	\cP_1(\cT^c_h, \bbR^d) &:= \set{\bv \in L^2(\Omega, \bbR^d);\ \bv|_K \in \cP_1(K, \bbR^d) \quad \forall K\in \cT^c_h}.
\end{align*}
It is shown in \cite{Arnold2002, Arnold2008} that the global interpolation operator $\Pi^{AW}_h\colon H(\bdiv,\Omega,\bbS)\cap H^{r}(\Omega,\bbS)\to \cX^{AW}_h$, defined for $r> 1/2$ by $(\Pi^{AW}_h\btau)|_{K} := \Pi^{AW}_K (\btau|_K)$, $\forall K\in \cT^c_h$, satisfies the commutativity property  
\begin{equation}\label{commPi}
	\bdiv \Pi^{AW}_h\btau = Q_h \bdiv \btau \quad \forall \btau\in H(\bdiv,\Omega,\bbS)\cap H^{r}(\Omega,\bbS),\quad r> 1/2,
\end{equation}
and the stability property 
\begin{equation}\label{stabPi}
	\norm{\Pi^{AW}_h \btau }_{0,\Omega} \leq C \norm{\btau}_{r,\Omega} \quad \forall \btau\in H(\bdiv,\Omega,\bbS)\cap H^{r}(\Omega,\bbS),\quad r> 1/2,
\end{equation}
with $C>0$ independent of $h$. Here, $Q_h\colon L^2(\Omega,\bbR^d)\to \cP_1(\cT_h^c, \bbR^d)$ represents the $L^2$-orthogonal projection. Additionally, for all $\btau\in H(\bdiv,\Omega,\bbS)\cap H^{r}(\Omega,\bbS)$, it holds that
\[
	\lim_{h\to 0} \norm{\btau - \Pi^{AW}_h\btau}_{H(\bdiv,\Omega)} = 0.
\]
This approximation property, combined with the density of smooth functions in $H(\bdiv,\Omega,\bbS)$, implies that the set $\set{\cX^{AW}_h}_h$ is dense in $H(\bdiv,\Omega,\bbS)$. Furthermore, it is well-known that $\set{\cP_1(\cT_h^c, \bbR^d}_h$ is also dense in $L^2(\Omega,\bbR^d)$.

We consider the problem of finding $\bu_h\in \cC^1(\cP_1(\cT_h^c, \bbR^d))$ and $\bsig_h\in \cC^1_{[0,T]}(\cX^{AW}_h)$ such that
\begin{equation}\label{var:golbal_h}
	\begin{split}
		\inner{\rho\dot\bu_h, \bv}_{\Omega} - \inner{\bdiv \bsig_h, \bv}_{\Omega} &= \inner{\bF, \bv}_{\Omega}  \quad \forall \bv\in \cP_1(\cT_h^c, \bbR^d),
		\\
		\inner{\cA_s\dot\bsig_h, \btau}_{\Omega_s} + \inner{\cA_f\bsig_h, \btau}_{\Omega_f}  + \inner{\bu_h , \bdiv\btau }_\Omega &= 0,\quad \forall \btau\in \cX^{AW}_h,
	\end{split}
\end{equation}
with the initial conditions 
\begin{equation}\label{init0_h}
	\bu_h(0) = Q_h\bu^0\quad \text{and} \quad \bsig_{h}|_{\Omega_s}(0)= (\Pi^{AW}_h \bsig^0)|_{\Omega_s}.
\end{equation}

\begin{proposition}
	Problem \eqref{var:golbal_h}-\eqref{init0_h} has a unique solution.
\end{proposition}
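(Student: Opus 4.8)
The plan is to exploit that \eqref{var:golbal_h}--\eqref{init0_h} is a finite-dimensional, linear, constant-coefficient system and to recast it as an explicit system of ordinary differential equations. The only obstruction to doing this directly is that no time derivative of $\bsig_h$ appears on the fluid region: the operator $\btau\mapsto\inner{\cA_s\dot\bsig_h,\btau}_{\Omega_s}$ acting on the stress is only positive \emph{semi}-definite, so \eqref{var:golbal_h} is in fact a differential--algebraic system in which the fluid part of $\bsig_h$ is algebraically slaved to the remaining unknowns. To separate the genuinely dynamic stress from the algebraic one, I would introduce the subspace $\cX^0_h:=\set{\btau\in\cX^{AW}_h;\ \btau|_{\Omega_s}=\mathbf 0}$ together with its $\cH$-orthogonal complement $\cX^s_h$, so that $\cX^{AW}_h=\cX^s_h\oplus\cX^0_h$, and write $\bsig_h=\bsig_h^s+\bsig_h^0$ accordingly.

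First I would resolve the algebraic part. Testing the second equation of \eqref{var:golbal_h} with $\btau\in\cX^0_h$ annihilates the $\dot\bsig_h$ term (since such $\btau$ vanish on $\Omega_s$) and leaves $\inner{\cA_f\bsig_h^0,\btau}_{\Omega_f}=-\inner{\cA_f\bsig_h^s,\btau}_{\Omega_f}-\inner{\bu_h,\bdiv\btau}_\Omega$ for all $\btau\in\cX^0_h$. Because elements of $\cX^0_h$ are supported in the fluid, the left-hand bilinear form coincides with the $\cH$-inner product restricted to $\cX^0_h$, which by \eqref{normHf} is symmetric and positive definite; hence this relation determines $\bsig_h^0$ uniquely as a bounded linear function $\bsig_h^0=\Phi(\bsig_h^s,\bu_h)$ of the other unknowns.

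Next I would close the system in the dynamic unknowns $(\bu_h,\bsig_h^s)$. Since $\bsig_h^0$ vanishes on $\Omega_s$ for all $t$, one has $\inner{\cA_s\dot\bsig_h,\btau}_{\Omega_s}=\inner{\cA_s\dot\bsig_h^s,\btau}_{\Omega_s}$, so testing the second equation with $\btau\in\cX^s_h$ gives $\inner{\cA_s\dot\bsig_h^s,\btau}_{\Omega_s}=-\inner{\cA_f\bsig_h,\btau}_{\Omega_f}-\inner{\bu_h,\bdiv\btau}_\Omega$, with $\bsig_h=\bsig_h^s+\Phi(\bsig_h^s,\bu_h)$ substituted. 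The key point is that $\inner{\cA_s\cdot,\cdot}_{\Omega_s}$ is an inner product on $\cX^s_h$: if $\inner{\cA_s\btau,\btau}_{\Omega_s}=0$ for $\btau\in\cX^s_h$, then $\btau|_{\Omega_s}=\mathbf 0$ by \eqref{normHs}, i.e. $\btau\in\cX^s_h\cap\cX^0_h=\set{\mathbf 0}$. Together with the fact that $\inner{\rho\,\cdot,\cdot}_\Omega$ is symmetric positive definite on $\cP_1(\cT_h^c,\bbR^d)$ (as $\rho>0$), and noting that neither right-hand side contains a time derivative, the first equation of \eqref{var:golbal_h} and the identity above can be solved separately for $\dot\bu_h$ and $\dot\bsig_h^s$, yielding an explicit linear ODE $\tfrac{d}{dt}(\bu_h,\bsig_h^s)=\mathbb A\,(\bu_h,\bsig_h^s)+(\bg(t),\mathbf 0)$ with constant matrix $\mathbb A$ and forcing $\bg$ built from $\bF$.

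Finally I would invoke standard linear ODE theory. Since $\bF\in W^{1,1}_{[0,T]}(L^2(\Omega,\bbR^d))\hookrightarrow\cC^0_{[0,T]}(L^2(\Omega,\bbR^d))$, the forcing $\bg$ is continuous on $[0,T]$, so the ODE has a unique $\cC^1_{[0,T]}$ solution for any initial value. The datum $\bu_h(0)=Q_h\bu^0$ is prescribed, while $\bsig_h^s(0)$ is recovered from $\bsig_h|_{\Omega_s}(0)=(\Pi^{AW}_h\bsig^0)|_{\Omega_s}$ using that $\btau\mapsto\btau|_{\Omega_s}$ is injective on $\cX^s_h$ (its kernel being $\cX^s_h\cap\cX^0_h=\set{\mathbf 0}$); concretely $\bsig_h^s(0)$ is the $\cX^s_h$-component of $\Pi^{AW}_h\bsig^0$, whose solid restriction matches the prescribed one. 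Setting $\bsig_h:=\bsig_h^s+\Phi(\bsig_h^s,\bu_h)$ then produces a $\cC^1_{[0,T]}$ solution of \eqref{var:golbal_h}--\eqref{init0_h}, and since every solution decomposes in this way and induces a solution of the same ODE with the same data, uniqueness follows from uniqueness for the ODE. The main obstacle throughout is precisely the semi-definiteness noted above; once the $\cH$-orthogonal splitting isolates the slaved fluid stress, the positivity bounds \eqref{normHs}--\eqref{normHf} do all the remaining work.
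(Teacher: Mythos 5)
Your proposal is correct, and it follows the same high-level strategy as the paper: view \eqref{var:golbal_h} as a differential--algebraic system, split the discrete stress into a part supported in the fluid (algebraically slaved) and a complementary dynamic part, eliminate the slaved part through a symmetric positive-definite linear system, and reduce to a constant-coefficient linear ODE in $(\bu_h,\bsig_h^{s})$. Where you genuinely diverge is in the construction of the splitting, and your route is markedly more elementary. The paper's algebraic subspace $\cX^{AW}_{0,h}=(I-E_h)\cX^{AW}_h$ is in fact the same space as your $\cX^0_h$ (any $\btau_h\in\cX^{AW}_h$ vanishing on $\Omega_s$ has vanishing normal trace on $\Sigma$ by $H(\bdiv)$-conformity, so the auxiliary problem defining $E_h\btau_h$ has zero data and $E_h\btau_h=\mathbf 0$), but the paper builds the complement as the range of an extension operator $E_h$, obtained by composing an elastic lifting in $\Omega_f$ with the Arnold--Winther interpolant $\Pi^{AW}_h$; this forces the author to solve an auxiliary Neumann problem for $\bw_f$, invoke elliptic regularity in polygonal/polyhedral domains to get $\bw_f\in H^{1+r_f}(\Omega_f,\bbR^d)$, $r_f>\sfrac12$, so that $\Pi^{AW}_h$ is applicable, and use the $\cX^{AW}_K$-invariance together with the commuting property \eqref{commPi}. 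You instead take the $\cH$-orthogonal complement $\cX^s_h$ of $\cX^0_h$, which needs no auxiliary PDE, no regularity theory, and no interpolation operator. This suffices because the only property of the complement that the argument actually uses --- injectivity of $\btau\mapsto\btau|_{\Omega_s}$ on it, which makes $\inner{\cA_s\cdot,\cdot}_{\Omega_s}$ an inner product there and pins down $\bsig_h^s(0)$ from the prescribed $(\Pi^{AW}_h\bsig^0)|_{\Omega_s}$ --- holds automatically for \emph{any} direct complement of the kernel $\cX^0_h$, exactly as you observe. What the paper's heavier construction buys is that its complement consists of tensors that are divergence-free in $\Omega_f$, so the coupling term $\inner{\bu_h,\bdiv\btau_h^\bot}_\Omega$ in the reduced system \eqref{var:golbal_h2} localizes to $\Omega_s$; this is a cosmetic simplification of the ODE, not something existence and uniqueness require. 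A further small point in your favor: you justify continuity in time of the ODE forcing via the embedding $W^{1,1}_{[0,T]}(L^2(\Omega,\bbR^d))\hookrightarrow\cC^0_{[0,T]}(L^2(\Omega,\bbR^d))$, which the paper leaves implicit at this stage of its argument.
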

\begin{proof}
	Given $\btau_h\in \cX^{AW}_h$, let $\bet \in H(\bdiv, \Omega, \bbS)$ be defined by $\bet|_{\Omega_s} := \btau_h|_{\Omega_s}$ and $\bet|_{\Omega_f} := \beps (\bw_f)$ where $\bw_f\in H^1(\Omega_f,\bbR^d)$ is the unique solution of the elliptic problem
	\begin{align}
		\begin{split}
			\bdiv \beps(\bw_f)   &= \mathbf 0\quad \text{in $\Omega_f$},
			\\
			\bw_f &= \mathbf 0 \quad \text{on $\Gamma_f$},
			\\
			\beps(\bw_f)\bn_f &= \btau_h\bn_f \quad \text{on $\Sigma$}.
		\end{split}
	\end{align}
	The problem above is well-posed. Moreover, as the Neumann data $\btau_h\bn_f$ on $\Sigma$  belongs to $H^\delta(\Sigma,\bbR^d)$ for all $0 \leq \delta <1/2$,  classical regularity results \cite{Grisvard1986,Dauge1988} for the elasticity equations in polygonal/polyhedral domains ensure the existence of $r_f\in (1/2, 1)$, which depends on the geometry of $\Omega_f$, such that $\bw_f \in H^{1+r_f}(\Omega_f,\bbR^d)$. It follows that the operator $E_h\colon \cX^{AW}_h \to \cX^{AW}_h$ defined by $E_h \btau_h = \Pi_h^{AW} \bet$
	is well-defined. Moreover, the $\cX^{AW}_K$-invariance of $\Pi_K^{AW}$ ensures that  $E_h\btau_h|_{\Omega_s} = \btau_h|_{\Omega_s}$ and the diagram commuting property \eqref{commPi} implies that   $\bdiv (E_h\btau_h)|_{\Omega_f} = \mathbf 0$. The linear operator $E_h$ induces the direct splitting $\cX^{AW}_h = \cX_{0,h}^{AW} \oplus \cX^{AW}_{\bot, h}$ where
	\[
		\cX_{0,h}^{AW} := (I - E_h)\cX^{AW}_h
		\quad \text{and}
		\quad
		\cX^{AW}_{\bot, h} := E_h(\cX^{AW}_h).
	\]
	The elements $\btau_h\in \cX^{AW}_h$ can then be uniquely split as $\btau_h = \btau_h^0 + \btau_h^\bot$, with $\btau_h^0:= \btau_h - E_h\btau_h\in \cX^{AW}_{0, h}$ and $\btau_h^\bot:= E_h \btau_h \in \cX^{AW}_{\bot, h}$.  It follows that problem \eqref{var:golbal_h} can be equivalently rewritten as follows: find $\bu_h\in \cC^1(\cP_1(\cT_h^c, \bbR^d))$, $\bsig_h^\bot\in \cC^1(\cX^{AW}_{\bot, h})$ and $\bsig_h^0\in \cC^1(\cX_{0,h}^{AW})$ such that $\bsig_h(t) = \bsig_h^\bot + \bsig_h^0$ and
	\begin{equation}\label{splitvar:golbal_h}
		\begin{split}
			\inner{\rho\dot\bu_h, \bv}_{\Omega} - \inner{\bdiv \bsig_h, \bv}_{\Omega} &= \inner{\bF, \bv}_{\Omega}  \quad \forall \bv\in \cP_1(\cT_h^c, \bbR^d),
			\\
			\inner{\cA_s\dot\bsig_h, \btau_h^\bot}_{\Omega_s} + \inner{\cA_f\bsig_h, \btau_h^\bot}_{\Omega_f}  + \inner{\bu_h , \bdiv\btau_h^\bot }_\Omega &= 0,\quad \forall \btau_h^\bot\in \cX^{AW}_{\bot, h}
			\\
			\inner{\cA_s\dot\bsig_h, \btau_h^0}_{\Omega_s} + \inner{\cA_f\bsig_h, \btau_h^0}_{\Omega_f}  + \inner{\bu_h , \bdiv\btau_h^0 }_\Omega &= 0,\quad \forall \btau_h^0\in \cX_{0,h}^{AW}
		\end{split}
	\end{equation}
	
	We observe that $\btau\mapsto \norm{\cA_f^{\sfrac12}\btau^\bot}_{0,\Omega_f}$ is a norm on $\cX_{0,h}^{AW}$, while $\btau\mapsto \norm{\cA_s^{\sfrac12}\btau^\bot}_{0,\Omega_s}$ induces a norm on $\cX_h^\bot$. The latter assertion is justified by  construction, as if $\btau\in \cX^{AW}_{\bot, h}$ satisfies $\btau|_{\Omega_s} = \mathbf 0$, then it follows that $\btau = \mathbf 0$ in $\Omega$.
	
	Keeping in mind that $\btau_h^0|_{\Omega_s} = \mathbf 0$ for all $\btau_h^0\in \cX_{0,h}^{AW}$, we deduce from the last equation of \eqref{splitvar:golbal_h} that
	\begin{equation}\label{var:subgolbal_h0}
		\inner{\cA_f\bsig_h, \btau_h^0}_{\Omega_f}  + \inner{\bu_h , \bdiv\btau_h^0 }_\Omega = 0,\quad \forall \btau_h^0\in \cX_{0,h}^{AW}.
	\end{equation}
	This induces us to consider the operator $L_h: \cP_1(\cT_h^c, \bbR^d)\times \cX^{AW}_{\bot, h} \to \cX_{0,h}^{AW}$, where $L_h(\bv_h,\btau_h^\bot) := \btau_h^0$ is the unique solution of the linear system of equations
	\begin{equation*}
		\inner{\cA_f\tau_h^0, \bet_h^0}_{\Omega_f} =   - \inner{\bv_h , \bdiv\bet_h^0 }_{\Omega_f} - \inner{\cA_f\btau_h^\bot, \bet_h^0}_{\Omega_f},\quad \forall \bet_h^0\in \cX_{0,h}^{AW}.
	\end{equation*}
	From \eqref{var:subgolbal_h0}, it is clear that the $\bsig_h^0$ component of $\bsig_h$ is determined by $\bsig_h^0 = L_h(\bu_h, \bsig_h^\bot)$. Consequently, according to \eqref{splitvar:golbal_h}, the solution components $\bu_h\in \mathcal{P}_1(\mathcal{T}h^c, \mathbb{R}^d)$ and $\bsig_h^\bot\in \mathcal{X}^{AW}_{\bot, h}$ of problem \eqref{splitvar:golbal_h} satisfy the first-order system of ordinary differential equations
	\begin{align}\label{var:golbal_h2}
		\begin{split}
			\inner{\rho\dot\bu_h, \bv}_{\Omega} &=  \inner{\bdiv \bsig_h, \bv}_{\Omega} + \inner{\bF, \bv}_{\Omega}  \quad \forall \bv\in \cP_1(\cT_h^c, \bbR^d),
			\\
			\inner{\cA_s\dot\bsig_h^\bot, \btau_h^\bot}_{\Omega_s} &= - \inner{\cA_f\bsig_h^\bot, \btau_h^\bot}_{\Omega_f}  - \inner{\bu_h , \bdiv\btau_h^\bot}_{\Omega_s}  - \inner{\cA_f L_h(\bu_h, \bsig_h^\bot), \btau_h^\bot}_{\Omega_f},\quad \forall \btau_h^\bot\in \cX^{AW}_{\bot, h},
		\end{split}
	\end{align}	
	with the initial conditions $\bu_h(0) = Q_h\bu^0$ and $\bsig_h^\bot|_{\Omega_s}(0
	) = (\Pi^{AW}_h \bsig^0)|_{\Omega_s}$.
	Using that the bilinear form $  \inner{\rho\bu, \bv}_{\Omega} + \inner{\cA_s\bsig, \btau}_{\Omega_s}$ induces a norm on $\cP_1(\cT_h^c, \bbR^d)\times \cX^{AW}_{\bot, h}$, we deduce from the standard theory for constant coefficient systems of linear  differential equations that \eqref{var:golbal_h2} has a unique solution of class $\cC^1$ on $[0, T]$.  This result immediately yields the existence of the Galerkin approximate solutions $(\bu_h(t),\bsig_h(t))$, with $\bsig_h(t) = \bsig_h^\bot(t) + L_h(\bu_h(t), \bsig_h^\bot(t))$,  satisfying \eqref{var:golbal_h}-\eqref{init0_h}.
\end{proof}
 We proceed to derive a priori estimates for the set $\set{(\bu_h(t),\bsig_h(t))}_h$.
\begin{lemma}
	Under conditions \eqref{dataregularity}, there exists a constant $C>0$ independent of $h$ such that
	\begin{align}\label{apriori}
		\begin{split}
			&\norm{\bu_h}^2_{W^{1,\infty}_{[0,T]}(L^2(\Omega,\bbR^d))} + \norm{\bsig_h}_{H^{1}_{[0,T]}(\cH)}^2 
			+ \norm{\bdiv\bsig_h}_{L^2_{[0,T]}(L^2(\Omega,\bbR^d)) }^2 
			\\
			&\qquad \leq C \left( \norm{\bF}_{W^{1,1}_{[0,T]}(L^2(\Omega, \bbR^d))}^2 + \norm{\bu^0}_{0,\Omega} +  \norm{\bsig^0}_{H(\div,\Omega)} + \norm{\bsig_s^1}_{0,\Omega_s}\right).
		\end{split}
	\end{align}
\end{lemma}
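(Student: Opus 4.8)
\section*{Proof plan}

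The plan is to run the classical energy method twice---once on the system \eqref{var:golbal_h} and once on its time derivative---and then recover the remaining norms algebraically. First I would test the first equation of \eqref{var:golbal_h} with $\bv=\bu_h$ and the second with $\btau=\bsig_h$ and add them. The terms $-\inner{\bdiv\bsig_h,\bu_h}_\Omega$ and $\inner{\bu_h,\bdiv\bsig_h}_\Omega$ cancel, leaving the energy identity
\[
\tfrac12\tfrac{d}{dt}\big(\norm{\rho^{\sfrac12}\bu_h}_{0,\Omega}^2 + \norm{\cA_s^{\sfrac12}\bsig_h}_{0,\Omega_s}^2\big) + \norm{\cA_f^{\sfrac12}\bsig_h}_{0,\Omega_f}^2 = \inner{\bF,\bu_h}_\Omega,
\]
in which the fluid term is a nonnegative dissipation (the structural reason the scheme is stable). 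Integrating in time, bounding $\inner{\bF,\bu_h}_\Omega$ by Cauchy--Schwarz, and invoking a Gr\"onwall argument (of the type absorbing $\int_0^t\norm{\bF}_{0,\Omega}\,(\,\cdot\,)^{\sfrac12}$ from the energy) yields control of $\bu_h$ in $L^\infty_{[0,T]}(L^2)$, of $\bsig_h|_{\Omega_s}$ in $L^\infty_{[0,T]}(L^2)$, and of $\bsig_h|_{\Omega_f}$ in $L^2_{[0,T]}(L^2)$; together these give $\bsig_h\in L^2_{[0,T]}(\cH)$.

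Because \eqref{var:golbal_h} has time-independent coefficients, the pair $(\dot\bu_h,\dot\bsig_h)$ solves the same system with $\bF$ replaced by $\dot\bF$. I would therefore repeat the energy identity on the differentiated system, testing with $(\dot\bu_h,\dot\bsig_h)$, to bound $\dot\bu_h$ in $L^\infty_{[0,T]}(L^2)$ and $\dot\bsig_h$ in $L^2_{[0,T]}(\cH)$; combined with the first step this delivers the $W^{1,\infty}_{[0,T]}(L^2)$ bound on $\bu_h$ and the $H^1_{[0,T]}(\cH)$ bound on $\bsig_h$. The integrated identity now requires the initial energy $\norm{\rho^{\sfrac12}\dot\bu_h(0)}_{0,\Omega}^2+\norm{\cA_s^{\sfrac12}\dot\bsig_h(0)}_{0,\Omega_s}^2$ to be controlled by the data. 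The velocity part is immediate: evaluating the first equation at $t=0$ with $\bv=\dot\bu_h(0)$ gives $\norm{\dot\bu_h(0)}_{0,\Omega}\lesssim\norm{\bdiv\bsig_h(0)}_{0,\Omega}+\norm{\bF(0)}_{0,\Omega}$, where $\bdiv\bsig_h(0)$ is handled through the commuting property \eqref{commPi} and the stability \eqref{stabPi} of $\Pi^{AW}_h$, and $\norm{\bF(0)}_{0,\Omega}\lesssim\norm{\bF}_{W^{1,1}_{[0,T]}(L^2(\Omega,\bbR^d))}$ by the embedding $W^{1,1}_{[0,T]}(L^2)\hookrightarrow\cC^0_{[0,T]}(L^2)$.

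The delicate point---and the reason $\bsig_s^1$ appears on the right-hand side---is the control of $\dot\bsig_h(0)|_{\Omega_s}$. I would evaluate the second equation of \eqref{var:golbal_h} at $t=0$, replace $Q_h\bu^0$ by $\bu^0$ inside $\inner{\cdot,\bdiv\btau}_\Omega$ (legitimate since $\bdiv\btau\in\cP_1(\cT_h^c,\bbR^d)$ and $Q_h$ is the $L^2$-projection onto it), and integrate by parts via \eqref{green} on each subdomain. Using $\bu^0|_{\Omega_s}=\bd^1$, $\bsig_s^1=\cC_s\beps(\bd^1)$ and $\bsig_f^0=\cC_f\beps(\bu_f^0)$, the volume terms reproduce $\inner{\cA_s\bsig_s^1,\btau}_{\Omega_s}$ and $\inner{\cA_f\bsig_f^0,\btau}_{\Omega_f}$, while the boundary contributions collapse: they vanish on $\Gamma_s$ and $\Gamma_f$ through the clamped and no-slip conditions (so $\bd^1=\mathbf 0$, $\bu_f^0=\mathbf 0$ there) and cancel on $\Sigma$ because $\bd^1=\bu_f^0$ while $\btau\bn_s+\btau\bn_f=\mathbf 0$ for every $\btau\in H(\bdiv,\Omega,\bbS)$. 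This identifies $\dot\bsig_h(0)|_{\Omega_s}$ weakly with $\bsig_s^1$ up to the fluid discrepancy $\bsig_f^0-\bsig_h(0)$, so that testing with $\btau=\dot\bsig_h(0)$ bounds $\norm{\cA_s^{\sfrac12}\dot\bsig_h(0)}_{0,\Omega_s}$ by $\norm{\bsig_s^1}_{0,\Omega_s}$ plus the $L^2$-stable initial fluid data. I expect this cancellation of the interface and boundary duality pairings to be the crux of the argument, since it is what converts the abstract initial derivative into the concrete datum $\bsig_s^1$.

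Finally I would recover $\bdiv\bsig_h$ in $L^2_{[0,T]}(L^2)$. Since each $\bsig_h|_K$ lies in $\cX^{AW}_K$, its divergence is in $\cP_1(K,\bbR^d)$, so $\bdiv\bsig_h$ is itself an admissible test function $\bv$ in the first equation; choosing $\bv=\bdiv\bsig_h$ gives $\norm{\bdiv\bsig_h}_{0,\Omega}\lesssim\norm{\dot\bu_h}_{0,\Omega}+\norm{\bF}_{0,\Omega}$ pointwise in $t$, and integrating over $[0,T]$ uses the already-established $L^\infty_{[0,T]}(L^2)$ bound on $\dot\bu_h$ together with $\bF\in L^2_{[0,T]}(L^2)$. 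Collecting the two energy estimates and this last bound, and tracking the data norms through the Gr\"onwall constants, yields \eqref{apriori} with $C$ independent of $h$.
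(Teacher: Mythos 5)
Your overall architecture (energy estimate, then a second energy estimate for the time derivatives, then algebraic recovery of $\bdiv\bsig_h$ from the velocity equation) is plausible, and your first and last steps are fine; but the step you yourself call the crux --- the bound on $\dot\bsig_h(0)|_{\Omega_s}$ --- does not close as written, and this is a genuine gap. After your compatibility manipulations, the $t=0$ identity reads
\[
\inner{\cA_s(\dot\bsig_h(0)-\bsig_s^1),\btau}_{\Omega_s}
=\inner{\cA_f(\bsig_f^0-\bsig_h(0)),\btau}_{\Omega_f}
\qquad\forall\,\btau\in\cX^{AW}_h .
\]
Taking $\btau=\dot\bsig_h(0)$ leaves on the right the term $\inner{\cA_f(\bsig_f^0-\bsig_h(0)),\dot\bsig_h(0)}_{\Omega_f}$, which involves the \emph{fluid} restriction of $\dot\bsig_h(0)$. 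This is not ``initial fluid data'': it is an unknown of the differential-algebraic system, and it is never controlled pointwise in time by either of your energy estimates (in both of them $\cA_f^{\sfrac12}\dot\bsig_h$ appears only inside a dissipation integral, so its value at $t=0$ is not bounded). There is nothing on the left-hand side to absorb it into, and you cannot sidestep it by localizing the test function to the solid, because $\cX^{AW}_h\subset H(\bdiv,\Omega,\bbS)$ enforces normal continuity across $\Sigma$: a test tensor vanishing in $\Omega_f$ cannot have an arbitrary restriction to $\Omega_s$. Closing this step would require a discrete extension/splitting of $\cX^{AW}_h$ that is stable uniformly in $h$ (the operator $E_h$ of the existence proof is used only at fixed $h$ and is never shown to be uniformly bounded). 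A second, lesser, problem: your interface integration by parts needs $\bd^1,\bu_f^0\in H^1$ with matching traces on $\Sigma$ and homogeneous boundary values, whereas \eqref{dataregularity} only assumes $\bu^0\in L^2(\Omega,\bbR^d)$.

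The paper avoids the issue by a different differentiation strategy: it differentiates only the stress equation and then eliminates $\dot\bu_h$ using the velocity equation in the form $\dot\bu_h=Q_h(\tfrac1\rho\bdiv\bsig_h+\tfrac1\rho\bF)$; since $\bdiv(\cX^{AW}_h)\subset\cP_1(\cT_h^c,\bbR^d)$, this yields the pure-stress second-order system \eqref{2dt}, whose energy already contains $\norm{\rho^{-\sfrac12}\bdiv\bsig_h}^2_{0,\Omega}$ (so no separate recovery step is needed), and whose required initial values are only $\bdiv\bsig_h(0)$ --- handled through the commuting property \eqref{commPi}, $\bdiv\Pi^{AW}_h\bsig^0=Q_h\bdiv\bsig^0$ --- and $\dot\bsig_{s,h}(0)$, which is \emph{assigned} to equal $X_h^s\bsig_s^1$ and bounded by $L^2$-projection stability. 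That is how $\norm{\bsig_s^1}_{0,\Omega_s}$ and $\norm{\bsig^0}_{H(\div,\Omega)}$ enter the right-hand side of \eqref{apriori} without any interface integration by parts or compatibility hypotheses. If you wish to keep your route of differentiating the full system, you must at minimum import this treatment of the initial values (or construct the uniform extension machinery), because the testing argument you propose cannot produce the needed bound.
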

\begin{proof}
	Taking $\bv = \bu_h$ and $\btau = \bsig_h$ in \eqref{var:golbal_h} and adding the resulting equations, we obtain
	\begin{equation}\label{energy}
		\frac{1}{2}\frac{\text{d}}{\text{d}t}\norm{\rho^{\sfrac12}\bu_h(t)}^2_{0,\Omega} + \frac{1}{2}\frac{\text{d}}{\text{d}t}\norm{\cA_s^{\sfrac12}\bsig_h(t)}^2_{0,\Omega_s} +  \norm{\cA_f^{\sfrac12}\bsig_h(t)}^2_{0,\Omega_f} = \inner{\bF, \bu_h}_{\Omega}.
	\end{equation}	
	Integrating in time we obtain
	\begin{equation}\label{energy1}
		\begin{split}
			&\frac{1}{2}\max_{[0,T]}\norm{\rho^{\sfrac12}\bu_h(t)}^2_{0,\Omega} + \frac{1}{2}\max_{[0,T]}\norm{\cA_s^{\sfrac12}\bsig_h(t)}^2_{0,\Omega_s} +  \int_{0}^T \norm{\cA_f^{\sfrac12}\bsig_h(t)}^2_{0,\Omega_f} \text{d}t 
			\\
			&\qquad = \int_{0}^T \inner{\bF(t), \bu_h}_{\Omega}\ \text{d}t+ \frac{1}{2}\norm{\rho^{\sfrac12}\bu_h(0)}^2_{0,\Omega} + \frac{1}{2}\norm{\cA_s^{\sfrac12}\bsig_h(0)}^2_{0,\Omega_s}.
		\end{split}
	\end{equation}
	Applying the Cauchy-Schwartz inequality to the first term on the right-hand side and taking into account \eqref{normHs} and the initial conditions \eqref{init0_h}, we deduce that  
	\begin{equation}\label{energy2}
		\begin{split}
			&\frac{\rho^-}{2}\max_{[0,T]}\norm{\bu_h(t)}^2_{0,\Omega} + \frac{1}{2}\max_{[0,T]}\norm{\cA_s^{\sfrac12}\bsig_h(t)}^2_{0,\Omega_s} +  \int_{0}^T \norm{\cA_f^{\sfrac12}\bsig_h(t)}^2_{0,\Omega_f} \text{d}t 
			\\
			&\qquad \leq  \norm{\bF}_{L^1_{[0,T]}(L^2(\Omega,\bbR^d))} \max_{[0,T]}\norm{\bu_h(t)}_{0,\Omega}+ \frac{\rho^+}{2}\norm{Q_h\bu^0}^2_{0,\Omega} + \frac{a^+_s}{2}\norm{\Pi^{AW}_h\bsig^0}^2_{0,\Omega_s},
		\end{split}
	\end{equation}
	where $\rho^+ = \max\{\rho_s, \rho_f\}$ and $\rho^- = \min\{\rho_s, \rho_f\}$. Using the stability of the projector $Q_h$ and \eqref{stabPi} we readily deduce from Young's inequality  the estimate 
	\begin{align}\label{energyEstimate}
		\begin{split}
			&\max_{[0,T]}\norm{\bu_h(t)}^2_{0,\Omega} + \frac{1}{2}\max_{[0,T]}\norm{\cA_s^{\sfrac12}\bsig_h(t)}^2_{0,\Omega_s} +  \int_{0}^T \norm{\cA_f^{\sfrac12}\bsig_h(t)}^2_{0,\Omega_f} \text{d}t 
			\\
			&\qquad \qquad \leq C_0 \left( \norm{\bF}^2_{L^1_{[0,T]}(L^2(\Omega,\bbR^d))} + \norm{\bu^0}^2_{0,\Omega} + \norm{\bsig^0}^2_{s,\Omega_s}\right),
		\end{split}
	\end{align}
	with a constant $C_0>0$ independent of $h$. 
	
	To establish stability estimates for the divergence term, we rely on the regularity assumptions specified in \eqref{dataregularity}. We recall that the  Sobolev embedding $W^{1,1}_{[0,1]}(L^2(\Omega,\bbR^d)) \hookrightarrow \cC^0_{[0,T]}(L^2(\Omega,\bbR^d))$ holds true, as established in \cite[Lemma 7.1]{roubivcek2013}. In other words, there exists a constant $C_1>0$  such that
	\begin{equation}\label{embedding}
		\max_{[0,T]}\norm{\bg(t)}_{L^2(\Omega,\bbR^d)} \leq C_1 \norm{\bg}_{W^{1,1}_{[0,T]}(L^2(\Omega,\bbR^d))}\quad \forall \bg \in W^{1,1}_{[0,T]}(L^2(\Omega,\bbR^d)).
	\end{equation}
	
	Taking the derivative of the second equation in \eqref{var:golbal_h} yields 
	\begin{equation}\label{deriv}
		\inner{\cA_s\ddot\bsig_h, \btau}_{\Omega_s} + \inner{\cA_f\dot\bsig_h, \btau}_{\Omega_f}  + \inner{\dot\bu_h , \bdiv\btau }_\Omega = 0,\quad \forall \btau\in \cX^{AW}_h.
	\end{equation}
	Next, we notice that the first equation in \eqref{var:golbal_h} can be written 
	\begin{equation}\label{udot}
		\dot\bu_h =   Q_h(\tfrac{1}{\rho}\bdiv \bsig_h + \tfrac{1}{\rho} \bF).
	\end{equation}
	Substituting back this expression for $\dot\bu_h$ in \eqref{deriv} and using that $\bdiv (\cX^{AW}_h) \subset \cP_1(\cT_h, \bbR^d)$ yield
	\begin{equation}\label{2dt}
		\inner{\cA_s\ddot\bsig_h, \btau}_{\Omega_s} + \inner{\cA_f\dot\bsig_h, \btau}_{\Omega_f} + \inner{\tfrac{1}{\rho}\bdiv\bsig_h, \bdiv\btau_h}_{\Omega} = - \inner{\tfrac{1}{\rho}\bF, \bdiv\btau_h}_\Omega \quad \forall \btau \in \cX^{AW}_h.
	\end{equation}
	We assign to this equation the initial conditions 
	\begin{equation}\label{init0_h2}
		\bsig_h(0)=  \Pi^{AW}_h \bsig^0,\quad \text{and} \quad \dot\bsig_{s,h}(0)=  X^s_h \bsig_s^1,
	\end{equation}
	where  $X^s_h\colon L^2(\Omega_s,\bbS) \to \set{\btau|_{\Omega_s},\ \btau \in \cX^{AW}_h}$ is the $L^2$-orthogonal projection.
	
	Taking $\btau = \dot\bsig_h$ in \eqref{2dt} gives 
	\[
		\frac{1}{2}\frac{\text{d}}{\text{d}t}\norm{\cA_s^{\sfrac12}\dot\bsig_h}^2_{0,\Omega_s} + \norm{\cA_f^{\sfrac12}\dot\bsig_h}^2_{0,\Omega_f} + \frac{1}{2}\frac{\text{d}}{\text{d}t}\norm{\tfrac{1}{\rho^{\sfrac12}}\bdiv\bsig_h}^2_{0,\Omega} = - \inner{\tfrac{1}{\rho}\bF, \bdiv\dot\bsig_h}_\Omega.
	\]
Integrating this identity over $[0, t]$, $t\leq T$,  and applying integration by parts on the right-hand side, we obtain
\begin{align*}
	\begin{split}
			&\frac{1}{2}\norm{\cA_s^{\sfrac12}\dot\bsig_h(t)}^2_{0,\Omega_s} + \frac{1}{2} \int_0^t \norm{\cA_f^{\sfrac12}\dot\bsig_h(s)}^2_{0,\Omega_f} \text{d}s + \frac{1}{2}\norm{\tfrac{1}{\rho^{\sfrac12}}\bdiv\bsig_h(t)}^2_{\Omega} 
			\\
			& \qquad =  \int_0^t \inner{\tfrac{1}{\rho}\dot\bF(s), \bdiv\bsig_h}_\Omega \text{d}s + \inner{\tfrac{1}{\rho}\bF(0), \bdiv\bsig_h(0)}_\Omega - \inner{\tfrac{1}{\rho}\bF(t), \bdiv\bsig_h}_\Omega
			\\
			&\qquad \qquad + \frac{1}{2}\norm{\cA_s^{\sfrac12}\dot\bsig_h(0)}^2_{0,\Omega_s} + \frac{1}{2}\norm{\tfrac{1}{\rho^{\sfrac12}}\bdiv\bsig_h(0)}^2_{\Omega}.
	\end{split}
\end{align*}
Using the Cauchy-Schwartz inequality to the inner products involving  the source term and taking into account \eqref{normHs}, the initial conditions \eqref{init0_h2}, and the Sobolev embedding \eqref{embedding}, we deduce that  
\begin{align*}
	\begin{split}
		&\frac{1}{2}\max_{[0,T]}\norm{\cA_s^{\sfrac12}\dot\bsig_h(t)}^2_{0,\Omega_s} + \frac{1}{2} \int_0^T \norm{\cA_f^{\sfrac12}\dot\bsig_h(t)}^2_{0,\Omega_f} \text{d}t + \frac{1}{2\rho^+}\max_{[0,T]}\norm{\bdiv\bsig_h(t)}^2_{0,\Omega} 
			\\
			& \qquad  \leq  \frac{\max\set{1,C_0}}{\rho^-} \norm{\dot\bF}_{W^{1,1}_{[0,T]}(L^2(\Omega,\bbR^d))} \left( \max_{[0,T]}\norm{\bdiv\bsig_h(t)}_{0,\Omega} + \norm{Q_h\bdiv\bsig^0}_{0,\Omega} \right) 
			\\
			&\qquad \qquad + \frac{a_s^+}{2}\norm{X_h^s\bsig^1}^2_{0,\Omega_s} + \frac{1}{2\rho^-}\norm{Q_h\bdiv\bsig^0}^2_{0,\Omega}.
	\end{split}
\end{align*}
The stability of the $L^2$-projection operators $X_h^s$ and $Q_h$ and a straightforward application of Young's inequality that there exists a constant $C_3>0$ independent of $h$ such that
	\begin{align}\label{energyEstimate2}
		\begin{split}
			&\max_{[0,T]}\norm{\cA_s^{\sfrac12}\dot\bsig_h(t)}^2_{0,\Omega_s} +  \int_0^T \norm{\cA_f^{\sfrac12}\dot\bsig_h(t)}^2_{0,\Omega_f} \text{d}t + \max_{[0,T]}\norm{\bdiv\bsig_h(t)}^2_{0,\Omega} 
			\\
			& \qquad \leq C_3  \left( \norm{\bF}_{W^{1,1}_{[0,T]}(L^2(\Omega, \bbR^2))}^2 + \norm{\bdiv \bsig^0}^2_{0,\Omega} +  \norm{\bsig_s^1}^2_{0,\Omega_s}\right).
		\end{split}
	\end{align}
	Finally, it follows from \eqref{udot} that 
	\begin{equation}\label{energyEstimate3}
		\max_{[0,T]} \norm{\dot\bu_h}^2_{0,\Omega} \leq C_4 \left( \max_{[0,T]} \norm{\bF}_{0,\Omega}^2 + \max_{[0,T]} \norm{\bdiv\bsig_h}_{0,\Omega}^2\right), 
	\end{equation}
	with $C_4>0$ independent of $h$. Summing up, it follows from  \eqref{energyEstimate},  \eqref{energyEstimate2}, and  \eqref{energyEstimate3} that 
	\begin{align*}
		\begin{split}
			&\norm{\bu_h}^2_{W^{1,\infty}_{[0,T]}(L^2(\Omega,\bbR^d))} +   \norm{\bsig_h}_{H^1_{[0,T]}(\cH)}^2
			+ \norm{\bdiv\bsig_h}_{L^\infty_{[0,T]}(L^2(\Omega, \bbR^d)) }^2 
			\\ 
			&\qquad \leq C_5 \left(  \norm{\bF}_{W^{1, 1}_{[0,T]}(L^2(\Omega, \bbR^d))}^2 + 
			\norm{\bu^0}^2_{0,\Omega} + \norm{\bsig^0}^2_{s,\Omega_s} + \norm{\bdiv \bsig^0}^2_{0,\Omega} +  \norm{\bsig_s^1}^2_{0,\Omega_s}
			\right),
		\end{split}
	\end{align*}	
	with $C_5>0$ independent of $h$, and the result follows.
\end{proof}

We will now pass to the limit in the Galerkin approximations \eqref{var:golbal_h}-\eqref{init0_h} to prove the existence of a solution $(\bu, \bsig)$ to problem \eqref{var:golbal}-\eqref{init0}. 

\begin{theorem}
	Assume that $\bF$, $\bu^0$, $\bsig^0$, and $\bsig_s^1$ satisfy conditions \eqref{dataregularity}. Then, there exists a unique solution $(\bu, \bsig)$ of problem \eqref{var:golbal}-\eqref{init0} satisfying
	\begin{align}\label{aprioriCont}
		\begin{split}
			&\norm{\bu}^2_{W^{1,\infty}_{[0,T]}(L^2(\Omega,\bbR^d))} + \norm{\bsig}_{H^{1}_{[0,T]}(\cH)}^2 
			+ \norm{\bdiv \bsig}_{L^2_{[0,T]}(L^2(\Omega, \bbR^d)) }^2 
			\\
			&\qquad \leq C \left( \norm{\bF}_{W^{1,1}_{[0,T]}(L^2(\Omega, \bbR^d))}^2 + \norm{\bu^0}_{0,\Omega} +  \norm{\bsig^0}_{H(\div,\Omega)} + \norm{\bsig_s^1}_{0,\Omega_s}\right),
		\end{split}
	\end{align}
	with a constant $C>0$ independent of $h$.
\end{theorem}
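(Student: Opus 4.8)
The plan is to follow the Faedo--Galerkin strategy announced just before the statement: extract weak limits of the discrete solutions $(\bu_h,\bsig_h)$ produced in the Proposition, using the $h$-uniform bounds of the previous Lemma, and identify the limit as the unique solution of \eqref{var:golbal}--\eqref{init0}. Estimate \eqref{apriori} shows that $\set{\bu_h}$ is bounded in $W^{1,\infty}_{[0,T]}(L^2(\Omega,\bbR^d))$, that $\set{\bsig_h}$ is bounded in $H^1_{[0,T]}(\cH)$, and that $\set{\bdiv\bsig_h}$ is bounded in $L^2_{[0,T]}(L^2(\Omega,\bbR^d))$. Since $L^\infty_{[0,T]}(L^2)$ is the dual of the separable space $L^1_{[0,T]}(L^2)$ and $H^1_{[0,T]}(\cH)$, $L^2_{[0,T]}(L^2)$ are reflexive, I would pass to a subsequence (not relabeled) and obtain $\bu\in W^{1,\infty}_{[0,T]}(L^2)$, $\bsig\in H^1_{[0,T]}(\cH)$ and $\bchi\in L^2_{[0,T]}(L^2)$ with $\bu_h\rightharpoonup^*\bu$, $\dot\bu_h\rightharpoonup^*\dot\bu$ weak-$*$, $\bsig_h\rightharpoonup\bsig$, $\dot\bsig_h\rightharpoonup\dot\bsig$ weakly, and $\bdiv\bsig_h\rightharpoonup\bchi$ weakly. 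Testing against $\bv\in C_c^\infty(\Omega\times(0,T),\bbR^d)$ and integrating by parts in space identifies $\bchi=\bdiv\bsig$, whence $\bsig\in L^2_{[0,T]}(H(\bdiv,\Omega,\bbS))$. Because the problem is \emph{linear}, weak convergence alone suffices and no Aubin--Lions compactness is needed.

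Next I would pass to the limit in \eqref{var:golbal_h}. By density it suffices to take $\btau$ smooth, for which $\Pi^{AW}_h\btau\to\btau$ in $H(\bdiv,\Omega)$, and to recall that $Q_h\bv\to\bv$ in $L^2(\Omega)$ for every $\bv$. I would fix $\bv$, $\btau$ and $\phi\in C_c^\infty(0,T)$, test the two discrete equations with the admissible functions $Q_h\bv$ and $\Pi^{AW}_h\btau$ multiplied by $\phi(t)$, and integrate over $[0,T]$. Each product of a weak-$*$ (resp.\ weak) limit with the strongly convergent test sequence passes to the limit termwise; in the coupling term I would write $\inner{\bu_h,\bdiv\Pi^{AW}_h\btau}_\Omega=\inner{\bu_h,\bdiv\btau}_\Omega+\inner{\bu_h,\bdiv(\Pi^{AW}_h\btau-\btau)}_\Omega$ and bound the remainder by the uniform $L^\infty_{[0,T]}(L^2)$-bound of $\bu_h$ times $\norm{\bdiv(\Pi^{AW}_h\btau-\btau)}_{0,\Omega}\to0$. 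This yields \eqref{var:golbal} tested against $\phi$, hence for a.e.\ $t$. The initial conditions follow by choosing $\phi\in C^1([0,T])$ with $\phi(T)=0$, $\phi(0)=1$, integrating the time-derivative terms by parts and comparing with the same computation on the limit equations; using $\bu_h(0)=Q_h\bu^0\to\bu^0$ and $\bsig_h|_{\Omega_s}(0)=(\Pi^{AW}_h\bsig^0)|_{\Omega_s}\to\bsig_s^0$ gives $\bu(0)=\bu^0$ and $\bsig_s(0)=\bsig_s^0$, both meaningful since $W^{1,\infty}_{[0,T]}(L^2)\hookrightarrow\cC^0_{[0,T]}(L^2)$ and $H^1_{[0,T]}(\cH)\hookrightarrow\cC^0_{[0,T]}(\cH)$.

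The genuinely delicate point --- and the step I expect to be the main obstacle --- is recovering the spatial regularity $\bu\in L^2_{[0,T]}(H^1_0(\Omega,\bbR^d))$ demanded by \eqref{var:golbal}, since the discrete velocities lie in the \emph{discontinuous} space $\cP_1(\cT_h^c,\bbR^d)$ and carry no uniform $H^1$-bound; it must be extracted from the limit itself. Testing the second limit equation with $\btau\in C_c^\infty(\Omega_s,\bbS)$ (extended by $\mathbf 0$) gives $\beps(\bu_s)=\cA_s\dot\bsig_s$ distributionally, and likewise $\beps(\bu_f)=\cA_f\bsig_f$; since the right-hand sides lie in $L^2_{[0,T]}(L^2)$, Korn's second inequality yields $\bu_s\in L^2_{[0,T]}(H^1(\Omega_s))$ and $\bu_f\in L^2_{[0,T]}(H^1(\Omega_f))$. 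Reinserting these identities into the second equation for a general $\btau\in H(\bdiv,\Omega,\bbS)$ and integrating by parts subdomain-wise cancels the volume terms and leaves $\dual{\bu_s,\btau\bn_s}_{\partial\Omega_s}+\dual{\bu_f,\btau\bn_f}_{\partial\Omega_f}=0$; separating the contributions on $\Gamma_s$, $\Gamma_f$ and $\Sigma$ (where $\bn_s=-\bn_f$) and invoking surjectivity of the normal-trace map forces $\bu_s=\mathbf 0$ on $\Gamma_s$, $\bu_f=\mathbf 0$ on $\Gamma_f$, and $\bu_s=\bu_f$ on $\Sigma$, i.e.\ $\bu\in H^1_0(\Omega,\bbR^d)$ with the kinematic transmission condition. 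The estimate \eqref{aprioriCont} is then inherited from \eqref{apriori} by weak and weak-$*$ lower semicontinuity of the norms.

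Finally, uniqueness follows from the energy identity. For two solutions I would set $\bu:=\bu_1-\bu_2$, $\bsig:=\bsig_1-\bsig_2$, which solve the homogeneous problem with zero initial data; testing the first equation with $\bv=\bu(t)$ and the second with $\btau=\bsig(t)$ and adding, the coupling terms $-\inner{\bdiv\bsig,\bu}_\Omega+\inner{\bu,\bdiv\bsig}_\Omega$ cancel, leaving
\begin{equation*}
\tfrac12\tfrac{\text{d}}{\text{d}t}\norm{\rho^{\sfrac12}\bu}^2_{0,\Omega}+\tfrac12\tfrac{\text{d}}{\text{d}t}\norm{\cA_s^{\sfrac12}\bsig}^2_{0,\Omega_s}+\norm{\cA_f^{\sfrac12}\bsig}^2_{0,\Omega_f}=0.
\end{equation*}
Integrating in time from $0$ with vanishing initial data gives $\bu\equiv\mathbf 0$, $\bsig_s\equiv\mathbf 0$ and $\bsig_f=\mathbf 0$ a.e., hence $\bu=\mathbf 0$ and $\bsig=\mathbf 0$. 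The differentiation of the squared norms is legitimate because $\bu\in H^1_{[0,T]}(L^2)$ and $\bsig\in H^1_{[0,T]}(\cH)$.
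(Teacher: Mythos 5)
Your proposal is correct and follows the same overall Faedo--Galerkin architecture as the paper: extract weak/weak-$*$ limits from the a priori bounds \eqref{apriori}, pass to the limit in \eqref{var:golbal_h}, recover the velocity regularity, and prove uniqueness via the energy identity with homogeneous data; the estimate \eqref{aprioriCont} then follows by weak lower semicontinuity in both treatments. Two steps are executed differently, both legitimately. First, in the limit passage the paper freezes a test pair in $\cP_1(\cT_{h_0}^c,\bbR^d)\times\cX^{AW}_{h_0}$, lets $h\le h_0$ tend to zero, and only then invokes density of $\set{\cX^{AW}_h}_h$ in $H(\bdiv,\Omega,\bbS)$; you instead test with the moving functions $Q_h\bv$ and $\Pi^{AW}_h\btau$ and absorb the projection errors, which avoids any implicit nestedness of the meshes at the price of using the strong convergence $\Pi^{AW}_h\btau\to\btau$ in $H(\bdiv,\Omega)$. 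Second, and more substantively, for the step you rightly flag as delicate --- $\bu\in L^2_{[0,T]}(H^1_0(\Omega,\bbR^d))$ --- the paper tests the second limit equation with $\btau\in C_c^\infty(\Omega,\bbS)$ whose support may cross $\Sigma$: this identifies $\beps(\bu)$ with an $L^2$ field as a distribution on all of $\Omega$ (so no interface jump can occur), after which a single application of Korn's inequality on $\Omega$ and Green's formula yield global $H^1$ regularity and the boundary condition at once. You work subdomain-wise: Korn on $\Omega_s$ and $\Omega_f$ separately, then recover $\bu_s=\bu_f$ on $\Sigma$ and the vanishing traces on $\Gamma_s,\Gamma_f$ from the residual boundary terms together with surjectivity of the normal-trace map. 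Your route requires this extra trace-matching argument (with its duality machinery on boundary parts, kept rigorous by testing with smooth tensors supported near the relative interior of each boundary piece), whereas the paper's globally supported test functions make that information automatic; in exchange, your version exhibits the kinematic transmission condition explicitly. The remaining steps --- initial conditions (by integration by parts in time, versus the paper's weak continuity of the evaluation at $t=0$) and the energy-identity uniqueness --- coincide with the paper's in substance.
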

\begin{proof}
Let $\set{(\bu_h, \bsig_h)}_h$ be the sequence of Galerkin approximations obtained as a solution of \eqref{var:golbal_h}-\eqref{init0_h}. By virtue of the energy estimate \eqref{apriori}, we can extract a subsequence of $\set{(\bu_h, \bsig_h)}_h$, still denoted by $\set{(\bu_h, \bsig_h)}_h$, such that the weak star$^*$ and weak  convergences 
\begin{align*}
	\begin{split}
		&\bu_h \overset{*}{\rightharpoonup} \bu \quad \text{in $W^{1,\infty}_{[0,T]}(L^2(\Omega,\bbR^d))$ \qquad  $\bsig_h \rightharpoonup \bsig$ in $H^{1}_{[0,T]}(\cH)$},
		\\
		&\text{and} \qquad \bdiv \bsig_h \rightharpoonup \bdiv \bsig \quad \text{in $L^2_{[0,T]}(L^2(\Omega,\bbR^d))$},
	\end{split}
\end{align*} 
hold true. Passing to the limit in \eqref{apriori} we deduce that the limit pair $(\bu, \bsig)$ satisfies \eqref{aprioriCont}. In addition, by virtue of the Sobolev embeddings $W^{1,\infty}_{[0,T]}(L^2(\Omega,\bbR^d)) \hookrightarrow \cC^0_{[0,T]}(L^2(\Omega,\bbR^d))$ and $H^{1}_{[0,T]}(L^2(\Omega,\bbS)) \hookrightarrow \cC^0_{[0,T]}(L^2(\Omega,\bbS))$ (see \cite[Chapter XVIII]{dautray2000}), we have that $(\bu_h(0), \bsig_h|_{\Omega_s}(0))$ also converges weakly to $(\bu(0), \bsig|_{\Omega_s}(0))$ in $L^2(\Omega,\bbR^d) \times L^2(\Omega,\bbS)$. Moreover, since $\bu_h(0)= Q_h\bu^0$ converges to $\bu^0$ in $L^2(\Omega,\bbR^d)$ and $\bsig_h(0)|_{\Omega_s} = (\Pi^{AW}_h \bsig^0)|_{\Omega_s})$ converges to $\bsig^0_s$ in $L^2(\Omega_s,\bbS)$, we deduce that the initial conditions $\bu(0)= \bu^0$ and $\bsig(0)|_{\Omega_s} = \bsig^0_s$ are verified. 

To prove that the limit pair $(\bu, \bsig)$ satisfies \eqref{var:golbal}, we use the usual limiting process. We fix $h_0>0$ and test \eqref{var:golbal_h} with $\varphi(t)\bv$ and $\psi(t)\btau$, where $\bv \in \cP_1(\cT_{h_0}^c, \bbR^d)$,  $\btau\in \cX^{AW}_{h_0}$, and  $\varphi, \psi \in \cC^0([0,T])$. Integrating over $[0, T]$ we deduce that, that for each $h\leq h_0$
\begin{align*}
	\begin{split}
		\int_{0}^{T} \Big[ \inner{\rho\dot\bu_h(t), \bv}_{\Omega} - \inner{\bdiv \bsig_h(t), \bv}_{\Omega} - \inner{\bF(t), \bv}_{\Omega} \Big]\varphi(t)\text{d}t  &= 0,	
		\\
		\int_{0}^{T} \Big[ \inner{\cA_s\dot\bsig_h(t), \btau}_{\Omega_s} + \inner{\cA_f\bsig_h(t), \btau}_{\Omega_f} +  \inner{\bu_h(t) , \bdiv\btau }_\Omega  \Big]\psi(t)\text{d}t & = 0.
	\end{split}
\end{align*}
By passing to the limit as $h\to 0$, we find
\begin{align*}
	\begin{split}
		\int_{0}^{T} \Big[ \inner{\rho\dot\bu(t), \bv}_{\Omega} - \inner{\bdiv \bsig(t), \bv}_{\Omega} - \inner{\bF(t), \bv}_{\Omega} \Big]\varphi(t)\text{d}t  &= 0,	
		\\
		\int_{0}^{T} \Big[ \inner{\cA_s\dot\bsig(t), \btau}_{\Omega_s} + \inner{\cA_f\bsig(t), \btau}_{\Omega_f} +  \inner{\bu(t) , \bdiv\btau }_\Omega  \Big]\psi(t)\text{d}t & = 0.
	\end{split}
\end{align*}
The density of $\set{\cP_1(\cT_{h}^c, \bbR^d)}_h$ in $L^2(\Omega,\bbR^d)$ and $\set{\cX^{AW}_{h}}_h$ in $H(\bdiv,\Omega,\bbS)$ imply that $(\bu, \bsig)$ satisfies \eqref{var:golbal}.

Let us show now that $\bu \in L^2_{[0,T]}(H^1_0(\Omega,\bbR^d))$. To this end,  we test the second equation in \eqref{var:golbal} with an indefinitely differentiable function $\btau$ having compact support contained in $\Omega$, then integrate by parts to deduce that $\beps(\bu)|_{\Omega_s} = \cA_s \dot\bsig_s(t)$ and $\beps(\bu)|_{\Omega_f} = \cA_f \bsig_f(t)$, which implies that $\beps(\bu) \in L^2_{[0,T]}(\cH)$. Using Korn's inequality, we further conclude that $\bu\in L^2_{[0,T]}(H^1(\Omega,\bbR^d))$. Next, applying Green's formula \eqref{green} to the second equation in \eqref{var:golbal} proves that  $\bu\in L^2_{[0,T]}(H_0^1(\Omega,\bbR^d))$.

To establish uniqueness, let us consider a solution $(\bu, \bsig)$  to \eqref{var:golbal}-\eqref{init0} with homogeneous data, namely $\bF = \mathbf 0$, $\bu^0 = \mathbf 0$, and $\bsig^0_s = \mathbf 0$. By applying to the system \eqref{var:golbal} the same steps that led to \eqref{energy1} we immediately deduce that $\bu(t) = \mathbf 0$ and $\bsig(t) = \mathbf 0$ for all $t\in [0, T]$, and the uniqueness follows. 
\end{proof}
 
\begin{remark}
	The $H(\bdiv)$-conforming and symmetry-preserving mixed finite elements discussed in this section, initially introduced by Arnold and Winther \cite{Arnold2002} in 2D and further developed by Arnold, Awanou, and Winther \cite{Arnold2008} in 3D, encounter computational complexity issues due to the significant number of required degrees of freedom. In Section~\ref{sec:FE}, a more efficient and easier-to-implement alternative for high-order approximations is proposed.	
\end{remark}

\begin{remark}
	We emphasize that equation \eqref{2dt} offers a pure-stress formulation of the FSI problem. However, in this study, we chose to utilize the velocity/stress formulation \eqref{var:golbal}-\eqref{init0} instead, as it aligns better with the requirements of the HDG discretization method. 
\end{remark}

\section{Finite element spaces and approximation properties}\label{sec:FE}

This section revisits well-established $hp$ approximation properties, adapting them to our specific notation and future requirements.

Let $\cT_h$ be a shape regular triangulation of the domain $\bar \Omega$ into tetrahedra and/or parallelepipeds if $d=3$ and into triangles and/or quadrilaterals if $d=2$. We allow $\cT_h$ to have hanging nodes and assume that it is aligned with the interface $\Sigma$ between $\Omega_s$ and $\Omega_f$. Specifically, $\bar{\Omega}_s$ and $\bar{\Omega}_f$ coincide with the unions of the elements of the sets $\cT_h^s := \set{K\in \cT_h;\ K\subset \bar{\Omega}_s}$ and $\cT_h^f := \set{K\in \cT_h;\ K\subset \bar{\Omega}_f}$, respectively.  As a consequence, $\cA_s$ and $\cA_f$ are constant tensors and $\rho$ is a constant function within each element of $\cT_h$. We denote by $h_K$ the diameter of $K$ and let the parameter $h:= \max_{K\in \cT_h} \{h_K\}$ represent the size of the mesh $\cT_h$.

We define a closed subset $F\subset \overline{\Omega}$ to be an interior edge/face if it has a positive $(d-1)$-dimensional measure and can be expressed as the intersection of the closures of two distinct elements $K$ and $K'$, i.e., $F =\bar K\cap \bar K'$. On the other hand, a closed subset $F\subset \overline{\Omega}$ is a boundary edge/face if there exists $K\in \cT_h$ such that $F$ is an edge/face of $K$ and $F =  \bar K\cap \partial \Omega$. We consider the set $\cF_h^0$ of interior edges/faces and the set $\cF_h^\partial$ of boundary edges/faces and let $\cF_h = \cF_h^0\cup \cF_h^\partial$. We denote by $h_F$ the diameter of an edge/face $F\in\cF_h$ and assume that $\cT_h$ is locally quasi-uniform with constant $\gamma>0$. This means that, for all $h$ and all $K\in \cT_h$, we have that   
\begin{equation}\label{reguT}
	  h_F \leq h_K\leq \gamma h_F\quad \forall F\in \cF(K), 
\end{equation}
where $\cF(K)$ represents the set  of edges/faces composing the element $K\in \cT_h$. This condition implies that neighboring elements have similar sizes.  

For all $s\geq 0$, the broken Sobolev space with respect to the partition $\cT_h$ of $\bar \Omega$ is defined as
\[
 H^s(\cT_h,E):=
 \set{\bv \in L^2(\Omega, E): \quad \bv|_K\in H^s(K, E)\quad \forall K\in \cT_h},\quad \text{for $E \in \set{ \bbR, \bbR^d, \bbS}$}. 
\]
Following the convention mentioned earlier, we write $H^0(\cT_h,E) = L^2(\cT_h,E)$ and $H^s(\cT_h,\bbR) = H^s(\cT_h)$. We introduce the inner product 
\[
 \inner{\psi, \varphi}_{\cT_h} := \sum_{K\in \cT_h} \inner{\psi, \varphi}_{ K}\quad \forall  \psi, \varphi \in L^2(\cT_h, E),\quad E\in \set{\bbR, \bbR^d, \bbS}
\]
and write $\norm{\psi}^2_{0,\cT_h}:= \inner{\psi, \psi}_{\cT_h}$. Accordingly, we let $\partial \cT_h:=\set{\partial K;\ K\in \cT_h}$ be the set of all element boundaries and define $L^2(\partial \cT_h,\bbR^d)$ as the space of vector-valued functions which are square-integrable on each $\partial K\in \partial \cT_h$. We define 
\[
  \dual{\bu, \bv}_{\partial \cT_h} := \sum_{K\in \cT_h} \int_{\partial K} \bu\cdot \bv, 
  \quad \text{and} \quad  
  \norm{\bv}^2_{0, \partial \cT_h}:= \dual{\bv, \bv}_{\partial \cT_h}
  \quad
  \forall  \bu, \bv\in L^2(\partial \cT_h,\bbR^d),
\]
where  $\dual{\bu, \bv}_{\partial K} := \sum_{F\in \cF(K)} \int_F\bu\cdot \bv$.  Besides, we equip the space $L^2(\cF_h,\bbR^d):= \prod_{F\in \mathcal{F}_h} L^2(F, \bbR^d)$ with the inner product  
\[
(\bu, \bv)_{\cF_h} := \sum_{F\in \cF_h} \int_F\bu\cdot \bv \quad \forall \bu,\bv\in L^2(\cF_h,\bbR^d),
\]
and denote the corresponding norm $\norm*{\bv}^2_{0,\cF_h}:= (\bv,\bv)_{\cF_h}$. 

Hereafter,  $\cP_m(D)$ is the space of polynomials of degree at most $m\geq 0$ on $D$ if $D$ is a triangle/tetrahedron, and the space of polynomials of degree at most $m$ in each variable in $D$ if $D$ is a quadrilateral/parallelepiped.  The space of $E$-valued functions with components in $\cP_m(D)$ is denoted $\cP_m(D, E)$ where $E$ is either $\bbR^d$,  or $\bbS$. We introduce the space of piecewise-polynomial functions   
\[
 \cP_m(\cT_h) := 
 \set{ v\in L^2(\cT_h): \ v|_K \in \cP_m(K),\ \forall K\in \cT_h }
 \]
with respect to the partition $\cT_h$ and  the space of piecewise-polynomial functions 
 \[
 \cP_m(\cF_h) := 
 \set{ \hat\bv\in L^2(\cF_h): \ \hat\bv|_F \in \cP_m(F),\ \forall F\in \cF_h }
 \]
 with respect to the partition $\cF_h$. The subspace of $L^2(\cT_h, E)$ with components in $\cP_m(\cT_h)$ is denoted $\cP_m(\cT_h, E)$ for $E\in \set{\bbR^d, \bbS}$. Likewise, $\cP_m(\cF_h, \bbR^d)$ stands for the subspace of $L^2(\cF_h, \bbR^d)$ with components in $ \cP_m(\cF_h)$.  We finally consider 
  \[
  \cP_m(\partial \cT_h, \bbR^d) := \set*{\bv\in L^2(\partial \cT_h, \bbR^d);\ \bv|_{\partial K}\in  \cP_m(\partial K, \bbR^d),\ \forall K\in \cT_h},
\]
where  $\cP_m(\partial K, \bbR^d):= \prod_{F\in \cF(K)} \cP_m(F, \bbR^d)$. It is important to keep in mind that, by definition,  the functions in $L^2(\partial \cT_h,\bbR^d)$ and $\cP_m(\partial \cT_h, \bbR^d)$ are multi-valued on every interior face $F$, whereas the functions in $L^2(\cF_h,\bbR^d)$ and $\cP_m(\cF_h, \bbR^d)$ are single-valued on each face $F$.

We consider  $\bn\in \cP_0(\partial \cT_h, \bbR^d)$, where $\bn|_{\partial K}=\bn_K $ is the unit normal vector to $\partial K$ oriented towards the exterior of $K$. Obviously, if $F = K\cap K'$ is an interior edge/face of $\cF_h$, then $\bn_K = -\bn_{K'}$ on $F$. If  $\bv\in H^s(\cT_h,\bbR^d)$ and $\btau\in H^s(\cT_h,\bbS)$, with $s>\sfrac{1}{2}$, the functions $\bv|_{\partial \cT_h}\in L^2(\partial \cT_h,\bbR^d)$ and $(\btau|_{\partial \cT_h}) \bn\in L^2(\partial \cT_h,\bbR^d)$ are meaningful by virtue of the trace theorem. For the same reason, if $\bv \in H^1(\Omega, \bbR^d)$ then $\hat\bv:= \bv|_{\cF_h}$ is well-defined in $L^2(\cF_h, \bbR^d)$.

For $k\geq 0$, we introduce the finite dimensional subspace of $\mathcal H$ given by 
\[
  \mathcal H_{h} := \cP_{k}(\cT_h,\bbS),
\]
and consider the following discrete trace inequality. 

\begin{lemma}\label{TraceDG}
There exists a constant $C>0$ independent of $h$ and $k$ such that 	
\begin{equation}\label{discTrace}
  \norm{\tfrac{h^{\sfrac{1}{2}}_{\cF}}{k+1} \btau\bn}_{0,\partial \cT_h}\leq C \norm*{ \btau}_{\cH}\quad \forall  \btau\in \cH_h. 
 \end{equation}
\end{lemma}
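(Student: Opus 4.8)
The plan is to localise the weighted skeleton norm on the left-hand side of \eqref{discTrace}, to control each face contribution by the corresponding element $L^2$-norm through a sharp $hp$ polynomial trace inequality, and then to return to the energy norm $\norm{\cdot}_\cH$ by means of the spectral equivalences \eqref{normHs}--\eqref{normHf}. The single nontrivial ingredient is the scaled trace inequality for polynomials: on a shape-regular element $K$ there is a constant $C>0$, depending only on the shape-regularity parameter and on $d$ but independent of $h_K$ and $k$, such that
\[
  \norm{v}_{0,\partial K}^2 \,\le\, C\,\frac{(k+1)^2}{h_K}\,\norm{v}_{0,K}^2 \qquad \forall v\in\cP_k(K).
\]
This is the only place where the precise $(k+1)^2$ growth enters, and the weight $\tfrac{h_{\cF}^{\sfrac12}}{k+1}$ in \eqref{discTrace} is calibrated exactly to cancel it; the remaining steps are elementary bookkeeping.

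Expanding the skeleton norm by its definition gives
\[
  \norm{\tfrac{h_{\cF}^{\sfrac12}}{k+1}\btau\bn}_{0,\partial\cT_h}^2
  = \sum_{K\in\cT_h}\sum_{F\in\cF(K)} \frac{h_F}{(k+1)^2}\,\norm{\btau\bn_K}_{0,F}^2.
\]
I fix $K\in\cT_h$ and $F\in\cF(K)$. Since $\bn_K$ is constant on $F$, the field $\btau\bn_K$ is the trace on $F$ of a vector-valued polynomial of degree at most $k$ on $K$. Applying the trace inequality componentwise, and using the pointwise bound $|\btau\bn_K|\le|\btau|$ (valid because $|\bn_K|=1$), I obtain
\[
  \norm{\btau\bn_K}_{0,F}^2 \,\le\, \norm{\btau\bn_K}_{0,\partial K}^2 \,\le\, C\,\frac{(k+1)^2}{h_K}\,\norm{\btau}_{0,K}^2.
\]

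Multiplying by the weight $h_F/(k+1)^2$, the factor $(k+1)^2$ cancels and the local quasi-uniformity \eqref{reguT} yields $h_F/h_K\le 1$, so each face contributes at most $C\norm{\btau}_{0,K}^2$. Because the number of faces $\#\cF(K)$ is bounded uniformly, summing over $F\in\cF(K)$ and then over $K\in\cT_h$ gives
\[
  \norm{\tfrac{h_{\cF}^{\sfrac12}}{k+1}\btau\bn}_{0,\partial\cT_h}^2
  \,\le\, C\sum_{K\in\cT_h}\norm{\btau}_{0,K}^2 \,=\, C\,\norm{\btau}_{0,\Omega}^2,
\]
and the lower bounds in \eqref{normHs} and \eqref{normHf} finally give $\norm{\btau}_{0,\Omega}^2 \le \max\{1/a_s^+,\,1/a_f^-\}\,\norm{\btau}_\cH^2$, which is \eqref{discTrace} with a constant independent of $h$ and $k$. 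As for the main obstacle, there is no deep difficulty once the sharp $hp$ trace inequality is invoked; the one point requiring care is to use that inequality with its \emph{optimal} $k$-dependence, since a cruder trace constant with a higher power of $k+1$ would leave an uncancelled factor of $k$ and thereby destroy the claimed $k$-independence of $C$.
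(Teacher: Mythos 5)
Your proof is correct, and it is in fact more informative than what the paper offers: the paper's entire ``proof'' of Lemma~\ref{TraceDG} is a citation to \cite[Lemma 3.2]{meddahi2023hp}, so your argument supplies the content the paper outsources, and it follows the standard route that almost certainly underlies the cited result --- elementwise sharp $hp$ trace inequality, cancellation of the $(k+1)^2$ growth by the skeleton weight, local quasi-uniformity \eqref{reguT}, and the spectral lower bounds in \eqref{normHs}--\eqref{normHf}. Three details are worth tightening, none of which is a genuine gap. (i) The mesh $\cT_h$ may contain quadrilaterals/parallelepipeds as well as simplices, so the key inequality must be invoked for both families: for simplices it is the Warburton--Hesthaven bound $\norm{v}_{0,\partial K}^2 \le \frac{(k+1)(k+d)}{d}\frac{|\partial K|}{|K|}\norm{v}_{0,K}^2$, and for tensor-product elements it follows from the one-dimensional Legendre estimate $|v(\pm 1)|^2 \le \frac{(k+1)^2}{2}\norm{v}_{0,(-1,1)}^2$; both give the $(k+1)^2/h_K$ scaling you use. (ii) Since hanging nodes are allowed, the number of mesh faces in $\cF(K)$ can exceed the number of geometric faces of $K$, so its uniform boundedness is not automatic; it does follow from \eqref{reguT} together with shape regularity, because each $F \in \cF(K)$ has $(d-1)$-measure bounded below by a multiple of $(h_K/\gamma)^{d-1}$ while the measure of $\partial K$ is bounded above by a multiple of $h_K^{d-1}$, and this should be said. (iii) Cosmetically, in the chain $\norm{\btau\bn_K}_{0,F} \le \norm{\btau\bn_K}_{0,\partial K}$ the polynomial being traced should be $\bx \mapsto \btau(\bx)\bn_F$ with $\bn_F$ the frozen constant value of the normal on $F$, since $\bn_K$ is only piecewise constant on $\partial K$ (equivalently, apply the face-wise form of the trace inequality directly to $\btau\bn_F$). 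Finally, note that your constant $\max\{1/a_s^+, 1/a_f^-\}$ inherits a dependence on the material coefficients, degenerating as $\lambda_f \to \infty$; this is consistent with the statement, which claims independence only of $h$ and $k$.
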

\begin{proof} See \cite[Lemma 3.2]{meddahi2023hp}. 
\end{proof}

For any integer $m\geq 0$ and $K\in \cT_h$, we denote by $\Pi_K^m$ the $L^2(K)$-orthogonal projection onto $\cP_{m}(K)$. The global projection $\Pi^m_\cT$ in $L^2(\cT_h)$ onto $\cP_m(\cT_h)$ is then given by $(\Pi^m_\cT v)|_K = \Pi_K^m(v|_K)$ for all $K\in \cT_h$. Similarly, the global projection $\Pi_\cF^m$ in $L^2(\cF_h)$ onto $\cP_{m}(\cF_h)$ is given,  separately for all $F\in \cF_h$,  by $(\Pi^m_\cF \hat v)|_{F} = \Pi_F^m(\hat v|_F)$, where   $\Pi^m_F$ is the $L^2(F)$-orthogonal projection onto $\cP_m(F)$. In the following, we maintain the notation $\Pi_\cT^m$ to refer to the $L^2$-orthogonal projection onto $\mathcal{P}_m(\mathcal{T}_h,E)$, for $E\in {\mathbb{R}^d, \mathbb{S}}$. It is noteworthy that the tensorial version of $\Pi_\cT^m$ inherently preserves the symmetry of matrices, as it is derived by applying the scalar operator componentwise. Similarly, we will use $\Pi_\cF^m$ to denote either the $L^2$-orthogonal projection onto $\mathcal{P}_m(\mathcal{F}_h)$ or $\mathcal{P}_m(\mathcal{F}_h, \mathbb{R}^d)$.

In the remainder of this section, we provide approximation properties for the  projectors defined above. A detailed proof of these results can be found in \cite[Section 3]{meddahi2023hp} and the references therein.

\begin{lemma}\label{maintool}
There exists a constant $C>0$ independent of $h$ and $k$  such that 
\begin{equation}\label{tool1}
	\norm{\btau - \Pi^k_\cT \btau}_{\cH} + \norm{ \tfrac{h^{\sfrac{1}{2}}_{\cF}}{k+1}(\btau - \Pi_\cT^k\btau)\bn}_{0,\partial \cT_h} 
	 \leq 
	C \tfrac{h_K^{\min\{ r, k \}+1}}{(k+1)^{r+1}} \Big( \norm*{\btau_s}^2_{1+r,\Omega_s}+ \norm*{\btau_f}^2_{1+r,\Omega_s}  \Big)^{\sfrac{1}{2}},
\end{equation}
	for all $\btau \in \mathcal{H}$ such that $\btau\in  H^{1+r}(\Omega_s\cup\Omega_f,\bbS)$, with $r\geq 0$.
\end{lemma}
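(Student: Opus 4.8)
The plan is to reduce everything to one-element $hp$-approximation estimates for the scalar $L^2$-orthogonal projection and then reassemble. The starting observation is that, under the hypotheses of Section~\ref{sec:FE}, the tensors $\cA_s$ and $\cA_f$ are piecewise constant and symmetric positive definite, so the equivalences \eqref{normHs}--\eqref{normHf} show that $\norm{\cdot}_\cH$ and $\norm{\cdot}_{0,\Omega}$ are uniformly equivalent, with constants independent of $h$ and $k$. Hence it suffices to bound $\norm{\btau - \Pi_\cT^k\btau}_{0,\Omega}$ and the weighted trace term in the plain $L^2$-setting. Since $\Pi_\cT^k$ is the $L^2$-orthogonal projection acting element-by-element and component-by-component (preserving symmetry), both quantities split into sums over $K\in\cT_h$ of scalar contributions $v-\Pi_K^k v$, where $v$ is a component of $\btau|_K\in H^{1+r}(K)$.

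For the volumetric term I would invoke the classical $hp$-version estimate for the $L^2$-projection on a single element, $\norm{v-\Pi_K^k v}_{0,K}\lesssim \tfrac{h_K^{\min\{r,k\}+1}}{(k+1)^{r+1}}\norm{v}_{1+r,K}$, together with the companion seminorm bound $|v-\Pi_K^k v|_{1,K}\lesssim \tfrac{h_K^{\min\{r,k\}}}{(k+1)^{r}}\norm{v}_{1+r,K}$; both are the Babuška--Suri / Schwab-type results recorded in \cite[Section 3]{meddahi2023hp}. Summing the squares of the first bound over elements and components and using $\btau\in H^{1+r}(\Omega_s\cup\Omega_f,\bbS)$ already yields the required control of $\norm{\btau-\Pi_\cT^k\btau}_\cH$.

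For the trace term the plan is to avoid the discrete trace inequality of Lemma~\ref{TraceDG} (which applies only to polynomials) and instead use the continuous scaled trace inequality $\norm{w}_{0,\partial K}^2\lesssim h_K^{-1}\norm{w}_{0,K}^2+h_K|w|_{1,K}^2$, valid for $w\in H^1(K)$; this is legitimate since $1+r\ge 1$ guarantees $w:=v-\Pi_K^k v\in H^1(K)$. Applying it together with the local quasi-uniformity $h_F\simeq h_K$ from \eqref{reguT} gives $\tfrac{h_F}{(k+1)^2}\norm{w}_{0,\partial K}^2\lesssim \tfrac{1}{(k+1)^2}\norm{w}_{0,K}^2+\tfrac{h_K^2}{(k+1)^2}|w|_{1,K}^2$. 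Inserting the two projection estimates above, the gradient contribution becomes $\tfrac{h_K^{2\min\{r,k\}+2}}{(k+1)^{2r+2}}\norm{v}_{1+r,K}^2$, which is exactly the target rate, while the $L^2$ contribution carries an extra factor $(k+1)^{-2}$ and is therefore absorbed. Taking square roots and summing over $K$ and components closes the estimate.

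The delicate point is precisely this trace bound: one must match the half-power weight $h_F^{\sfrac12}$ and the factor $(k+1)^{-1}$ so that both pieces produced by the scaled trace inequality reproduce the quasi-optimal bulk rate $h_K^{\min\{r,k\}+1}(k+1)^{-(r+1)}$, which forces the use of the sharp $hp$-dependence of the $H^1$-seminorm projection error (and not merely its $h$-dependence). Everything else --- the norm equivalence, the reduction to a single element, and the final summation --- is routine. I would also note that, since the right-hand side of \eqref{tool1} is obtained element-wise, the stated global form is to be read after summation over $K\in\cT_h$ (using $h_K\le h$), the local Sobolev norms assembling into $\norm{\btau_s}_{1+r,\Omega_s}$ and $\norm{\btau_f}_{1+r,\Omega_f}$.
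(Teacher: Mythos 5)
Your overall architecture is fine --- the norm equivalence via \eqref{normHs}--\eqref{normHf}, the reduction to scalar, element-wise estimates, and the volumetric bound $\norm{v-\Pi_K^k v}_{0,K}\lesssim \tfrac{h_K^{\min\{r,k\}+1}}{(k+1)^{r+1}}\norm{v}_{1+r,K}$ are all correct (the paper itself simply defers to \cite[Lemma 3.3]{meddahi2023hp}). The genuine gap is exactly at the point you flagged as ``delicate'': your trace-term argument requires the seminorm estimate $\abs{v-\Pi_K^k v}_{1,K}\lesssim \tfrac{h_K^{\min\{r,k\}}}{(k+1)^{r}}\norm{v}_{1+r,K}$, i.e.\ \emph{optimal} $k$-dependence of the $H^1$-error of the $L^2$-orthogonal projection. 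This is not a Babu\v{s}ka--Suri/Schwab-type result: those authors construct a \emph{different} approximation operator enjoying simultaneous $L^2$/$H^1$ optimality, and no such statement for $\Pi_K^k$ is recorded in \cite{meddahi2023hp}. In fact the claim is false: the $L^2$-orthogonal projection is genuinely suboptimal in $H^1$ with respect to the polynomial degree; the sharp bound (Canuto--Quarteroni) is $\abs{v-\Pi_K^k v}_{1,K}\lesssim \tfrac{h_K^{\min\{r,k\}}}{(k+1)^{r-\sfrac{1}{2}}}\norm{v}_{1+r,K}$, losing half a power of $k$ (a full power if derived crudely via the polynomial inverse inequality). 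Feeding the true bound into your additive scaled trace inequality gives, for the gradient contribution, $\tfrac{h_K^2}{(k+1)^2}\abs{w}^2_{1,K}\lesssim \tfrac{h_K^{2\min\{r,k\}+2}}{(k+1)^{2r+1}}\norm{v}^2_{1+r,K}$, so after taking square roots your argument only delivers the rate $(k+1)^{-(r+\sfrac{1}{2})}$, half a power of $k$ short of \eqref{tool1}.

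The fix requires changing one of your two tools. Either keep purely ``continuous'' estimates but replace the additive trace inequality by the multiplicative one, $\norm{w}^2_{0,\partial K}\lesssim \norm{w}_{0,K}\big(h_K^{-1}\norm{w}_{0,K}+\abs{w}_{1,K}\big)$: since this takes a geometric mean of the $L^2$- and $H^1$-errors, even the crude bound $\abs{w}_{1,K}\lesssim \tfrac{h_K^{\min\{r,k\}}}{(k+1)^{r-1}}\norm{v}_{1+r,K}$ suffices, because $\tfrac{h_F}{(k+1)^2}\norm{w}_{0,K}\abs{w}_{1,K}\lesssim \tfrac{h_K}{(k+1)^2}\cdot\tfrac{h_K^{\min\{r,k\}+1}}{(k+1)^{r+1}}\cdot\tfrac{h_K^{\min\{r,k\}}}{(k+1)^{r-1}}=\tfrac{h_K^{2\min\{r,k\}+2}}{(k+1)^{2r+2}}\norm{v}^2_{1+r,K}$, exactly the target. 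Or follow the standard $hp$ route: insert a Babu\v{s}ka--Suri simultaneous approximant $\pi_k v$, use its trace bound $\norm{v-\pi_k v}_{0,\partial K}\lesssim \tfrac{h_K^{\min\{r,k\}+\sfrac{1}{2}}}{(k+1)^{r+\sfrac{1}{2}}}\norm{v}_{1+r,K}$, and control the polynomial difference $\pi_k v-\Pi_K^k v$ on $\partial K$ with the discrete trace inequality \eqref{discTrace} --- precisely the tool you set out to avoid. The weight $\tfrac{1}{k+1}$ in \eqref{tool1} is engineered to absorb the factor $k+1$ of that inequality, which is why the full rate $(k+1)^{-(r+1)}$ is attainable for the weighted trace term even though the unweighted trace error of $\Pi_K^k$ is not optimal in $k$.
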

\begin{proof}
	The result is a straightforward adaptation of \cite[Lemma 3.3]{meddahi2023hp}.
\end{proof}

In what follows, given $\ubv := (\bv, \hat \bv ) \in H^s( \cT_h,\bbR^d)\times L^2( \cF_h,\bbR^d)$, with $s>\sfrac{1}{2}$, we define $\jump{\ubv} \in L^2(\partial \cT_h,\bbR^d)$ by  
\[
\jump{\ubv}|_{\partial K} := \bv|_{\partial K} - \hat \bv|_{\partial K}\quad  \forall K\in \cT_h.
\] 
We introduce the space $\underline{\cV} := H^1_0(\cT_h,\bbR^d)\times L^2( \cF^0_h,\bbR^d)$, where 
\[
  L^2(\cF^0_h,\bbR^d):=\set{\bv\in L^2(\cF_h,\bbR^d);\ \bv|_F = \mathbf 0,\ \forall F\in \cF_h^\partial},
\]
and endowed it with the semi-norm   
\begin{equation}\label{norm:sym}
  \norm{\ubv}^2_{\underline{\cV}} = \norm{\nabla \bv}^2_{0,\cT_h} + \norm{\tfrac{k+1}{h_\cF^{\sfrac{1}{2}}}\jump{\ubv}}^2_{0, \partial \cT_h}  \quad \forall \ubv:= (\bv, \hat\bv)\in \underline{\cV},
\end{equation}
where $h_\cF\in \cP_0(\cF_h)$ is given by $h_\cF|_F := h_F$ for all $F \in \cF_h$.
For $k\geq 0$, we consider the finite-dimensional subspace $\underline{\cV}_{h} := \cP_{k+1}(\cT_h,\bbR^d) \times \cP_{k+1}(\cF^0_h, \bbR^d)$ of $\underline{\cV}$, where  
\[ 
\cP_{k+1}(\cF^0_h, \bbR^d) := \set{\hat \bv \in \cP_{k+1}(\cF_h, \bbR^d);\ \hat \bv|_F = \mathbf 0,\ \forall F\in \cF_h^\partial}.
\] 
We introduce the operator 
$\Xi_h^{k+1}:\, \underline{\cV} \to \underline{\cV}_h$ given by $\Xi_h^{k+1}\ubv = (\Pi_\cT^{k+1} \bv, \Pi^{k+1}_\cF \hat \bv)$, for all $\ubv =(\bv, \hat \bv)\in \underline{\cV}$. Moreover, we consider  the linear operator $\underline i:\, H^1_0(\Omega,\bbR^d) \to \underline{\cV} $ defined, for any $\bv \in H^1_0(\Omega, \bbR^d)$, by 
\[
  \underline i(\bv) :=(\bv, \hat\bv)\in \underline{\cV}, \quad \text{with}\ \hat\bv := \bv|_{\cF_h}.
\]
We point out that $\jump{\underline i(\bv)} = \mathbf 0$ for all $\bv \in H^1_0(\Omega,\bbR^d)$. 

\begin{lemma}\label{maintool2}
	There exists a constant $C>0$ independent of $h$ and $k$  such that 
	\begin{equation}\label{tool2}
	\norm{ \underline i(\bu) - \Xi_h^{k+1}\underline i(\bu)}_{\underline{\cV}}  
		 \leq 
		C \tfrac{h^{\min\{ r, k \}+1}}{(k+1)^{r+\sfrac{1}{2}}} \Big( \sum_{j=1}^J \norm{\bu}^2_{2+r,\Omega_j}\Big)^{\sfrac{1}{2}},
	\end{equation}
		for all  $\bu \in H_0^1(\Omega,\bbR^d)\cap H^{2+r}(\cup_j\Omega_j,\bbR^d)$ , $r \geq 0$.
	\end{lemma}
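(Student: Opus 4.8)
The plan is to expand the error explicitly and treat the two contributions to the $\underline{\cV}$-seminorm \eqref{norm:sym} separately. Since $\underline i(\bu) = (\bu, \bu|_{\cF_h})$, one has
\[
\underline i(\bu) - \Xi_h^{k+1}\underline i(\bu) = \bigl(\bu - \Pi^{k+1}_\cT \bu,\ \bu|_{\cF_h} - \Pi^{k+1}_\cF(\bu|_{\cF_h})\bigr),
\]
so it remains to bound $\norm{\nabla(\bu - \Pi^{k+1}_\cT\bu)}_{0,\cT_h}$ and the weighted jump. For the gradient term I would invoke the standard $hp$ approximation estimate for the $L^2$-orthogonal projection in the $H^1$-seminorm, namely $|\bu - \Pi^{k+1}_\cT\bu|_{1,K} \lesssim \tfrac{h_K^{\min\{r,k\}+1}}{(k+1)^{r+1}}\norm{\bu}_{2+r,K}$ on each $K$; squaring, summing over $\cT_h$, using $h_K\le h$ and $(k+1)^{r+1}\ge (k+1)^{r+\sfrac12}$, and regrouping the elementwise norms by the subdomains $\Omega_j$ already yields a contribution of the required order (in fact half a power of $k$ better than claimed).

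The jump term is the core of the argument. Using that $\jump{\underline i(\bu)} = \mathbf 0$ together with the linearity of the jump, on each face $F \in \cF(K)$ one has
\[
\jump{\underline i(\bu) - \Xi_h^{k+1}\underline i(\bu)}|_F = (\bu - \Pi^{k+1}_\cT\bu)|_F - (\bu - \Pi^{k+1}_F\bu)|_F.
\]
The essential observation is that $(\Pi^{k+1}_\cT\bu)|_F$, being the trace on $F$ of a polynomial of degree $k+1$, belongs to $\cP_{k+1}(F,\bbR^d)$; hence the best-approximation property of the $L^2(F)$-projection $\Pi^{k+1}_F$ gives $\norm{\bu - \Pi^{k+1}_F\bu}_{0,F} \le \norm{(\bu - \Pi^{k+1}_\cT\bu)|_F}_{0,F}$. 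By the triangle inequality the whole jump on $F$ is then controlled by $2\norm{(\bu - \Pi^{k+1}_\cT\bu)|_F}_{0,F}$, so the face projection drops out of the analysis and everything reduces to the trace on $\partial K$ of the single volume-projection error $\bu - \Pi^{k+1}_\cT\bu$.

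It then suffices to apply the sharp $hp$ trace-approximation estimate from \cite[Section 3]{meddahi2023hp}, namely $\norm{\bu - \Pi^{k+1}_\cT\bu}_{0,\partial K} \lesssim \tfrac{h_K^{\min\{r,k\}+\sfrac32}}{(k+1)^{r+\sfrac32}}\norm{\bu}_{2+r,K}$. Multiplying by the weight $\tfrac{k+1}{h_F^{\sfrac12}}$, invoking the local quasi-uniformity $h_F \sim h_K$ of \eqref{reguT}, squaring, and summing over all faces of all elements produces precisely the factor $\tfrac{h^{\min\{r,k\}+1}}{(k+1)^{r+\sfrac12}}$; regrouping the resulting $\norm{\bu}_{2+r,K}^2$ by the subdomains $\Omega_j$ and combining with the gradient estimate yields \eqref{tool2}.

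The main obstacle is the control of the power of $k$. The naive multiplicative trace inequality $\norm{\cdot}^2_{0,\partial K} \lesssim h_K^{-1}\norm{\cdot}^2_{0,K} + h_K|\cdot|^2_{1,K}$ combined with the $L^2$- and $H^1$-projection rates only delivers a denominator $(k+1)^{r}$ for the weighted jump, which falls short by half a power. Reaching $(k+1)^{r+\sfrac12}$ requires the $p$-explicit trace-approximation bound whose denominator scales like $p^{\,t-\sfrac12}$ rather than $p^{\,t-1}$; this is exactly the mechanism responsible for the half-power suboptimality in $k$ announced in the introduction.
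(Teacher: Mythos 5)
The paper gives no proof of this lemma at all --- it is quoted directly from \cite[Lemma 3.4]{meddahi2023hp} --- so your argument can only be judged on its own merits, and its skeleton is essentially the right one: split the seminorm \eqref{norm:sym} into the broken-gradient and weighted-jump contributions, eliminate the face projection $\Pi^{k+1}_\cF$ via the $L^2(F)$ best-approximation property (legitimate when $(\Pi^{k+1}_\cT\bu)|_F\in\cP_{k+1}(F,\bbR^d)$, which holds on simplicial faces and needs a word of justification for the quadrilateral/parallelepiped elements the paper also admits), and control the surviving element-boundary terms by a $p$-sharp trace estimate for the $L^2$ projection. Your diagnosis that the naive multiplicative trace inequality cannot reach $(k+1)^{r+\sfrac12}$, and that a Chernov-type optimal trace bound with denominator $p^{\,s-\sfrac12}$ is indispensable, is also correct. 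Be aware, though, that by taking that trace bound from \cite[Section 3]{meddahi2023hp} you are borrowing the deepest ingredient from the very reference the paper cites for the entire lemma, so the hard core of the proof remains outsourced --- acceptable, but worth acknowledging.

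There is one genuine error, although it is non-fatal. The estimate you call standard for the gradient term, $|\bu-\Pi^{k+1}_\cT\bu|_{1,K}\lesssim h_K^{\min\{r,k\}+1}(k+1)^{-(r+1)}\norm{\bu}_{2+r,K}$, is false for the $L^2$-orthogonal projection: best approximation attains the rate $(k+1)^{-(r+1)}$, but the $L^2$ projection is suboptimal in $H^1$-type norms by half a power of the degree (this is classical already in one dimension, going back to Canuto--Quarteroni), so only $(k+1)^{-(r+\sfrac12)}$ is available. You cannot dodge this by switching to a better operator, because the lemma concerns precisely $\Xi^{k+1}_h$, whose components must be $L^2$ projections for the orthogonality cancellations used in Lemma~\ref{stab_sd}. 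Since \eqref{tool2} only asks for $(k+1)^{-(r+\sfrac12)}$, your bound for the gradient term survives once corrected; but your parenthetical claim that this term is ``half a power of $k$ better than claimed'' is wrong, and so is your closing assertion that the trace mechanism alone is responsible for the $k^{\sfrac12}$ suboptimality of Remark~\ref{R1}. In fact both contributions saturate at $(k+1)^{-(r+\sfrac12)}$ for independent reasons: the volume term because of the intrinsic $H^1$-loss of the $L^2$ projection, and the jump term because the stabilization weight $\tfrac{k+1}{h_\cF^{\sfrac12}}$ consumes half a power even against the optimal trace rate. With the corrected gradient estimate, both terms land at the same rate and your proof closes.
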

	\begin{proof}
		See \cite[Lemma 3.4]{meddahi2023hp}. 
	\end{proof}
	
	\section{Introduction of the HDG semi-discrete method}\label{sec:semi-discrete}

	We propose the following HDG space-discretization method for problem \eqref{var:golbal}-\eqref{init0}:  find  $\bsig_h \in  \cC^1_{[0,T]}(\mathcal{H}_h)$ and  $\ubu_h := (\bu_h, \hat\bu_h) \in \cC^1_{[0,T]}(\underline{\cV}_h)$  satisfying  
	\begin{align}\label{sd}
	\begin{split}
		\inner{\rho \dot\bu_h, \bv}_{\cT_h} &+ \inner{\cA_s\dot{\bsig}_{h}, \btau }_{\Omega_s}  +  \inner*{ \cA_f\bsig_h, \btau}_{\Omega_f}  + B_h(\bsig_h, \ubv) - B_h(\btau, \ubu_h)
		\\
		& +\dual{ \tfrac{(k+1)^2}{h_\cF}\jump{\ubu_h}, \jump{\ubv}}_{\partial \cT_h} 
	= \inner*{\bF(t), \bv}_{\cT_h}\quad \forall \btau \in \cH_h,\ \forall \ubv = (\bv, \hat \bv)\in \underline{\cV}_h,
	\end{split}
	\end{align}
	where the bilinear form $B_h(\cdot,\cdot)$ is defined by:
	\[
	  B_h(\btau, \ubv) := \inner*{\btau, \beps(\bv)}_{\cT_h} - \dual*{ \btau\bn,  \jump{\ubv}}_{\partial \cT_h}.  
	\]  
	We start up problem \eqref{sd} with the initial conditions 
	\begin{equation}\label{initial-R1-R2-h*c}
		\ubu_h(0) = \Xi^{k+1}_\cT \underline i (\bu^0), \quad \bsig_{h}|_{\Omega_s}(0)= \Pi_\cT^k \bsig^0_s.
	\end{equation}
	
	We have the following boundedness property for the bilinear form $B_h(\cdot,\cdot)$.
	\begin{proposition}
		There exists a constant $C>0$ independent of $h$ and $k$ such that  
	\begin{equation}\label{Bhh}
		|B_h(\btau, \ubv)| \leq C  \norm{\btau}_\cH  \norm{ \ubv}_{\underline{\cV}}\quad \text{for all}\ \btau \in \mathcal H_h \ \text{and} \ \ubv = \underline{\cV}.
	\end{equation}
	\end{proposition}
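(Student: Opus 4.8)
The plan is to estimate the two pieces of $B_h(\btau,\ubv) = \inner{\btau,\beps(\bv)}_{\cT_h} - \dual{\btau\bn,\jump{\ubv}}_{\partial\cT_h}$ separately and pair each with the matching term of the seminorm \eqref{norm:sym}, so that the final bound comes out with a constant independent of both $h$ and $k$.

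For the volume contribution I would first exploit the symmetry of $\btau\in\cH_h$, writing $\btau:\beps(\bv)=\btau:\bnabla\bv$ pointwise (the skew part of $\bnabla\bv$ contracts to zero against the symmetric tensor $\btau$). A Cauchy--Schwarz inequality over $\cT_h$ then gives $|\inner{\btau,\beps(\bv)}_{\cT_h}|\le \norm{\btau}_{0,\Omega}\,\norm{\bnabla\bv}_{0,\cT_h}$. The norm equivalences \eqref{normHs}--\eqref{normHf} imply $\norm{\btau}_{0,\Omega}\lesssim\norm{\btau}_\cH$, while $\norm{\bnabla\bv}_{0,\cT_h}$ is exactly the first term of $\norm{\ubv}_{\underline{\cV}}$; hence this part is bounded by $C\norm{\btau}_\cH\,\norm{\ubv}_{\underline{\cV}}$.

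For the boundary contribution I would insert the scaling factor dictated by \eqref{norm:sym} and apply Cauchy--Schwarz on $\partial\cT_h$:
\[
|\dual{\btau\bn,\jump{\ubv}}_{\partial\cT_h}| \le \norm{\tfrac{h_\cF^{\sfrac{1}{2}}}{k+1}\btau\bn}_{0,\partial\cT_h}\,\norm{\tfrac{k+1}{h_\cF^{\sfrac{1}{2}}}\jump{\ubv}}_{0,\partial\cT_h}.
\]
The second factor is precisely the jump part of $\norm{\ubv}_{\underline{\cV}}$, and the first factor is controlled by the discrete trace inequality \eqref{discTrace} of Lemma~\ref{TraceDG}, which yields $\norm{\tfrac{h_\cF^{\sfrac{1}{2}}}{k+1}\btau\bn}_{0,\partial\cT_h}\le C\norm{\btau}_\cH$. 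Adding the two estimates produces \eqref{Bhh}.

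The only delicate point is the $k$-uniformity of the boundary term: a naive trace inequality would introduce a factor that deteriorates with the polynomial degree, and it is exactly the weighting $h_\cF^{\sfrac{1}{2}}/(k+1)$ combined with Lemma~\ref{TraceDG} that absorbs this dependence. Everything else reduces to Cauchy--Schwarz and the norm equivalences, so the argument is short once the right pairing of weights is chosen.
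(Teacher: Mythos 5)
Your proposal is correct and follows essentially the same route as the paper: Cauchy--Schwarz on each term of $B_h$ with the weights $h_\cF^{\sfrac{1}{2}}/(k+1)$ and $(k+1)/h_\cF^{\sfrac{1}{2}}$ on the boundary pairing, the norm equivalences \eqref{normHs}--\eqref{normHf} for the volume term, and the discrete trace inequality \eqref{discTrace} of Lemma~\ref{TraceDG} to absorb the factor $\norm{\tfrac{h_\cF^{\sfrac{1}{2}}}{k+1}\btau\bn}_{0,\partial\cT_h}$ into $\norm{\btau}_\cH$, which is exactly where the paper uses the fact that $\btau$ lies in the discrete space $\cH_h$. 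The only cosmetic differences are that you bound the two contributions separately rather than through a single combined Cauchy--Schwarz estimate as in \eqref{Bh}, and that you dispose of the symmetrization via $\btau:\beps(\bv)=\btau:\bnabla\bv$ instead of $\norm{\beps(\bv)}_{0,\cT_h}\le\norm{\bnabla\bv}_{0,\cT_h}$.
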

	\begin{proof}
		Applying the Cauchy-Schwarz inequality, \eqref{normHs}, and \eqref{normHf}, we deduce that 
	\begin{equation}\label{Bh}
		  |B_h(\btau, \ubv)| \lesssim ( \norm{\btau}^2_\cH + \norm{\tfrac{h_\cF^{\sfrac{1}{2}}}{k+1} \btau\bn}^2_{0,\partial \cT_h})^{\sfrac{1}{2}} \norm{ \ubv}_{\underline{\cV}},
	\end{equation}
	For all $\btau\in \cH\cap H^s(\cT_h, \bbS)$ with $s\geq \sfrac{1}{2}$, and for all $\ubv\in \underline{\cV}$. The result follows by applying the discrete trace inequality \eqref{discTrace}.
	\end{proof}	
	
	Like in \eqref{var:golbal_h}, the semi-discrete problem \eqref{sd} also exhibits a differential-algebraic structure. This necessitates further analysis to guarantee its well-posedness.

\begin{proposition}
	Problem~\eqref{sd} admits a unique solution.
\end{proposition}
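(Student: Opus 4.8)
The plan is to exploit the differential-algebraic structure already noted for \eqref{var:golbal_h}: in \eqref{sd} the time derivative acts only on $\bu_h$ and on the solid restriction $\bsig_h|_{\Omega_s}$, whereas $\bsig_h|_{\Omega_f}$ and the hybrid trace $\hat\bu_h$ enter purely algebraically. I would therefore treat $(\bu_h, \bsig_h|_{\Omega_s})$ as the differential unknowns and $(\bsig_h|_{\Omega_f}, \hat\bu_h)$ as the algebraic ones, eliminate the latter, and reduce \eqref{sd} to a linear system of ordinary differential equations in the former. Because $\cT_h$ is aligned with $\Sigma$ and $\cH_h = \cP_k(\cT_h, \bbS)$ is fully discontinuous, $\bsig_h|_{\Omega_f}$ can be tested independently against tensors supported in $\Omega_f$; this makes the elimination cleaner than in the $H(\bdiv)$-conforming setting of \eqref{var:golbal_h}, where the splitting operator $E_h$ was needed.

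First I would specialize the test functions in \eqref{sd}. Taking $\ubv = \mathbf 0$ with $\btau$ supported in $\Omega_s$, and $\btau = \mathbf 0$ with $\ubv = (\bv, \mathbf 0)$, yields the two differential equations carrying $\dot\bsig_h|_{\Omega_s}$ and $\dot\bu_h$. Taking instead $\ubv = \mathbf 0$ with $\btau$ supported in $\Omega_f$, and $\btau = \mathbf 0$ with $\ubv = (\mathbf 0, \hat\bv)$, yields the two algebraic constraints
\begin{align*}
	\inner*{\cA_f\bsig_h, \btau}_{\Omega_f} - B_h(\btau, \ubu_h) &= 0 \qquad \forall\, \btau\in \cP_k(\cT_h^f, \bbS),
	\\
	\dual*{\bsig_h\bn, \hat\bv}_{\partial\cT_h} - \dual*{\tfrac{(k+1)^2}{h_\cF}\jump{\ubu_h}, \hat\bv}_{\partial\cT_h} &= 0 \qquad \forall\, \hat\bv\in \cP_{k+1}(\cF^0_h, \bbR^d),
\end{align*}
which, for given $(\bu_h, \bsig_h|_{\Omega_s})$, constitute a square linear system for $(\bsig_h|_{\Omega_f}, \hat\bu_h)$.

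The hard part is to show this algebraic system is uniquely solvable, that is, that its homogeneous counterpart (obtained by setting $\bu_h = \mathbf 0$ and $\bsig_h|_{\Omega_s} = \mathbf 0$) has only the trivial solution. I would argue by energy: taking $\btau = \bsig_h|_{\Omega_f}$ in the first constraint and $\hat\bv = \hat\bu_h$ in the second, and using that $\jump{\ubu_h} = -\hat\bu_h$ and that $\bsig_h\bn$ reduces to $\bsig_h|_{\Omega_f}\bn$ on the fluid element boundaries once $\bsig_h|_{\Omega_s} = \mathbf 0$, the two identities add up to
\[
	\norm{\cA_f^{\sfrac12}\bsig_h|_{\Omega_f}}^2_{0,\Omega_f} + \dual*{\tfrac{(k+1)^2}{h_\cF}\hat\bu_h, \hat\bu_h}_{\partial\cT_h} = 0.
\]
Both summands are nonnegative, so the positive definiteness of $\cA_f$ in \eqref{normHf} forces $\bsig_h|_{\Omega_f} = \mathbf 0$, while the positivity of the stabilization forces $\hat\bu_h = \mathbf 0$ on every interior face, each of which contributes to $\partial\cT_h$. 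The delicate point is exactly that the stabilization is active on all of $\partial\cT_h$: this is what pins down $\hat\bu_h$ even on faces lying inside $\Omega_s$, where no fluid stress is present. With the kernel shown to be trivial, the square system is invertible and defines continuous linear maps $\bsig_h|_{\Omega_f} = S_h(\bu_h, \bsig_h|_{\Omega_s})$ and $\hat\bu_h = T_h(\bu_h, \bsig_h|_{\Omega_s})$.

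Finally, substituting $S_h$ and $T_h$ into the two differential equations closes them into a first-order system whose left-hand mass forms $\inner{\rho\, \cdot, \cdot}_{\cT_h}$ and $\inner{\cA_s\, \cdot, \cdot}_{\Omega_s}$ are positive definite on $\cP_{k+1}(\cT_h, \bbR^d)$ and on $\cP_k(\cT_h^s, \bbS)$, respectively (since $\rho>0$ and $\cA_s$ is positive definite, cf. \eqref{normHs}), hence invertible. This reduces \eqref{sd} to a constant-coefficient linear system $\dot\by = A\by + \bg(t)$ with $\by = (\bu_h, \bsig_h|_{\Omega_s})$ and forcing $\bg$ assembled from $\bF$. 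By the embedding \eqref{embedding} we have $\bF\in \cC^0_{[0,T]}(L^2(\Omega, \bbR^d))$, so $\bg$ is continuous and the standard theory of linear ordinary differential equations provides a unique $\cC^1$ solution $(\bu_h, \bsig_h|_{\Omega_s})$ on $[0, T]$ fixed by the initial values $\bu_h(0)$ and $\bsig_h|_{\Omega_s}(0)$ prescribed in \eqref{initial-R1-R2-h*c}. Since $S_h$ and $T_h$ are linear, the recovered $\bsig_h|_{\Omega_f}$ and $\hat\bu_h$ are $\cC^1$ as well, so $(\bsig_h, \ubu_h)$ lies in the required spaces, completing the argument.
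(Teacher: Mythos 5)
Your proof is correct and follows the paper's overall architecture: read \eqref{sd} as a differential-algebraic system, eliminate the algebraic unknowns $(\bsig_h|_{\Omega_f}, \hat\bu_h)$, reduce to a linear constant-coefficient ODE system in $(\bu_h, \bsig_h|_{\Omega_s})$ whose mass form $\inner{\rho\,\cdot,\cdot}_{\cT_h} + \inner{\cA_s\,\cdot,\cdot}_{\Omega_s}$ is positive definite, and invoke standard ODE theory; both proofs also prescribe the same initial data, namely only $\bu_h(0)$ and $\bsig_h|_{\Omega_s}(0)$ from \eqref{initial-R1-R2-h*c}, letting the constraints determine the remaining components at $t=0$. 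Where you genuinely differ is the mechanism of the elimination. The paper proceeds sequentially and explicitly: the $\hat\bv$-equation is a face-local (diagonal) solve thanks to the stabilization, yielding the closed formula $\hat\bu_h = \mmean{\bu_h} - \tfrac{h_\cF}{(k+1)^2}\mmean{\bsig_h\bn}$, after which the fluid stress is obtained by inverting a coercive operator $R_h$. You instead eliminate the pair $(\bsig_h|_{\Omega_f}, \hat\bu_h)$ in one shot, as a square linear system whose injectivity follows from an energy identity in which the cross terms $\pm\dual*{\bsig_h\bn,\hat\bu_h}_{\partial\cT_h}$ cancel — the same skew-symmetry of the $B_h$-coupling that powers the stability estimates of Lemma~\ref{stab_sd}. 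The paper's route buys explicitness: it displays the static-condensation structure and the local computability of the hybrid unknown. Your route buys brevity and robustness, since you never manipulate face averages; in particular your operator $S_h$ correctly retains the dependence of $\bsig_h|_{\Omega_f}$ on $\bsig_h|_{\Omega_s}$ through the interface faces on $\Sigma$, a coupling that the paper's $R_h$ (defined as a function of $\bu_h$ alone) quietly suppresses, because on $F\subset\Sigma$ the average $\mmean{\bsig_h\bn}$ also contains the solid stress.
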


\begin{proof}
	The differential-algebraic system of equations \eqref{sd} can be written as follows
	\begin{align}\label{ADE1}
		\begin{split}
			&\inner{\rho \dot\bu_h, \bv}_{\cT_h} + \inner{\cA_s\dot{\bsig}_{h}, \btau }_{\Omega_s}  +  \inner*{\cA_s  \bsig_h, \btau}_{\Omega_f}  +
		  \inner{\bsig_h, \beps(\bv)}_{\cT_h} - \inner{\btau, \beps(\bu_h)}_{\cT_h} 
	\\
	&\quad - \dual{\bsig_h\bn, \bv}_{\partial \cT_h} +\dual{ \jump{\ubu_h}, \tfrac{(k+1)^2}{h_\cF}\bv + \btau\bn}_{\partial \cT_h} 
= \inner*{\bF(t), \bv}_{\cT_h}\quad \forall \btau \in \cH_h,\ \forall \bv \in \cP_{k+1}(\cT_h,\bbR^d)
\\
&\qquad \qquad \qquad \qquad \dual{\tfrac{(k+1)^2}{h_\cF} \hat\bu_h, \hat\bv  }_{\partial \cT_h} = \dual{\tfrac{(k+1)^2}{h_\cF} \bu_h - \bsig_h\bn, \hat\bv }_{\partial \cT_h}\quad \forall \hat\bv \in \cP_{k+1}(\cF^0_h, \bbR^d).
\end{split}
	\end{align}  
It follows from the second equation of \eqref{ADE1}, that  for all $t\in (0,T]$, $\hat\bu_h(t)$ can be expressed as follows:
\[
\hat\bu_h(t) = \mmean{\bu_h}(t) - \tfrac{h_\cF}{(k+1)^2}\mmean{\bsig_h\bn}(t)
\] 
where $\mmean{\bu_h}(t)$ and $\mmean{\bsig_h\bn}(t)$ are defined in $\cP_{k+1}(\cF^0_h,\bbR^d)$ by $\mmean{\bu_h}|_F := \frac12 (\bu_h|_K + \bu_h|_{K'})|_F$ and $\mmean{\bsig_h\bn}|_F := \frac12( \bsig_h|_K \bn_K + \bsig_h|_{K'}\bn_{K'})|_F$, respectively, for all $F\in \cF^0_h$ with $F= K \cap K'$. We can then eliminate $\hat\bu_h$ and end up with the system 
 \begin{align}\label{ADE2}
		\begin{split}
			\inner{\rho \dot\bu_h, \bv}_{\cT_h} + \inner{\cA_s\dot{\bsig}_{h}, \btau }_{\Omega_s}  +  \inner*{\cA_f \bsig_h, \btau}_{\Omega_f}  +
		  \inner{\bsig_h, \beps(\bv)}_{\cT_h} &- \inner{\btau, \beps(\bu_h)}_{\cT_h}- \dual{\bsig_h\bn, \bv}_{\partial \cT_h}
	\\
	 +\dual{ \bu_h - (\mmean{\bu_h} - \tfrac{h_\cF}{(k+1)^2}\mmean{\bsig_h\bn}), \tfrac{(k+1)^2}{h_\cF}\bv + \btau\bn}_{\partial \cT_h} 
&= \inner*{\bF(t), \bv}_{\cT_h},
\end{split}
\end{align}
for all $\btau \in \cH_h$ and $\bv \in \cP_{k+1}(\cT_h,\bbR^d)$. 

On the other hand, the orthogonal decomposition $\cH_h = \cH_{s,h} \oplus \cH_{f,h}$, defined in terms of the subspaces  
\[
\cH_{s,h} := \set{\btau \in \cH_h;\ \btau|_{\Omega_f} = \mathbf 0}\quad  \text{and} \quad \cH_{f,h} := \set{\btau \in \cH_h;\ \btau|_{\Omega_s} = \mathbf 0},
\]
induces the splitting $\bsig_h(t) = \bsig_{s,h} + \bsig_{f,h}$, with $\bsig_{s,h}(t)\in \cH_{s,h}$ and $\bsig_{f,h}(t)\in \cH_{f,h}$. Choosing $\bv = 0$ and $\btau \in  \cH_{f,h}$ in \eqref{ADE2} yields
\begin{equation}\label{ADE22}
		\inner{\cA_f\bsig_{f,h}, \btau}_{\Omega_f} - \inner{\btau, \beps(\bu_h)}_{\cT_h}
		+\dual{ \bu_h - (\mmean{\bu_h} - \tfrac{h_\cF}{(k+1)^2}\mmean{\bsig_{f,h}\bn}),  \btau\bn}_{\partial \cT_h} = 0\quad \forall \btau \in \cH_{f,h}.
\end{equation}
Given $\bv_h\in \cP_{k+1}(\cT_h,\bbR^d)$, we define $R_h \bv_h\in \cH_{f,h}$ as the unique solution of the linear system of equations
\[
	\inner{\cA_f(R_h \bv_h), \btau}_{\Omega_f} + 2 \dual{ \tfrac{h_\cF}{(k+1)^2}\mmean{(R_h \bv_h)\bn}),  \mmean{\btau\bn}}_{ \cF_h} = \dual{ \mmean{\bv_h} - \bv_h   ,  \btau\bn}_{\partial \cT_h} 
	+ \inner{\btau, \beps(\bv_h)}_{\cT_h}\quad \forall \btau \in \cH_{f,h}.	
\]
It follows from \eqref{ADE22} that the component $\bsig_{f,h}(t)\in \cH_{f,h}$ of  the solution $\bsig_h(t)$ is given by $\bsig_{f,h}(t) = R_h (\bu_h(t))$.
Using this fact in \eqref{ADE2} we conclude that the component $\bsig_{s,h}(t)\in \cH_{s,h}$ of $\bsig_h(t)$  and $\bu_h\in \cC^1(\cP_{k+1}(\cT_h,\bbR^d))$ are the unique solutions of the system of ordinary differential equations
\begin{align}\label{ADE2R}
	\begin{split}
		&\inner{\rho \dot\bu_h, \bv}_{\cT_h} + \inner{\cA_s\dot{\bsig}_{s,h}, \btau }_{\Omega_s} =    -
		\inner{\bsig_{s,h} + R_h\bu_h, \beps(\bv)}_{\cT_h}   + \inner{\btau, \beps(\bu_h)}_{\cT_h}
\\
&\quad + \dual{ (\bsig_{s,h} + R_h\bu_h)\bn, \bv}_{\partial \cT_h}  - \dual{ \tfrac{h_\cF}{(k+1)^2}\mmean{(\bsig_{s,h} + R_h\bu_h)\bn}, \tfrac{(k+1)^2}{h_\cF}\bv + \btau\bn}_{\partial \cT_h}
\\
&\quad \quad  + \dual{ \mmean{\bu_h} - \bu_h, \tfrac{(k+1)^2}{h_\cF}\bv + \btau\bn}_{\partial \cT_h} + \inner*{\bF(t), \bv}_{\cT_h}, \quad \forall (\bv, \btau)\in \cP_{k+1}(\cT_h,\bbR^d) \times \cH_{s,h},
\end{split}
\end{align}
with the initial conditions \eqref{initial-R1-R2-h*c}.

As $\inner{\cA_s\cdot, \cdot}_{\Omega_s} + \inner{\rho \cdot, \cdot}_{\cT_h}$ is an inner product in the finite dimensional space $\cH_{s,h}\times \cP_{k+1}(\cT_h,\bbR^d)$, we can select a suitable set of basis  for this function space to represent the semidiscrete problem \eqref{ADE2}  as a conventional first-order system of differential equations. As a result, the existence of a unique  solution $(\bsig_{s,h}, \bu_h)$ to \eqref{ADE2R} is guaranteed. It follows that the unique solution to \eqref{sd}-\eqref{initial-R1-R2-h*c} is given by $\bsig_h = \bsig_{s,h} + R_h(\bu_h)$ and $\ubu_h = (\bu_h, \hat \bu_h)$, with $\hat\bu_h = \mmean{\bu_h} - \tfrac{h_\cF}{(k+1)^2}\mmean{\bsig_h\bn}$. 
\end{proof}

The next step is to prove the consistency of the HDG scheme \eqref{sd} with respect to the continuous problem \eqref{var:golbal}.

\begin{proposition}\label{consistency}
Let
\[
   \bsig\in H^1_{[0, T]}(\cH)\cap L^2_{[0,T]}(H(\bdiv,\Omega,\bbS))
  \quad \text{and} \quad 
  \bu\in W^{1,\infty}_{[0,T]}(L^2(\Omega,\bbR^d))\cap L^2_{[0,T]}(H^1_0(\Omega,\bbR^d))
\]
be the solutions of \eqref{var:golbal}-\eqref{init0} and assume that the stress tensor $\bsig$ belongs  to  $\cC_{[0,T]}^0(H^s(\cT_h, \bbS)\cap H(\bdiv, \Omega, \bbS))$, with $s>\sfrac{1}{2}$. Then, it holds true that  
 \begin{align}\label{consistent}
\begin{split}
	\inner{\rho \dot\bu, \bv}_{\cT_h} &+ \inner{\cA_s\dot{\bsig}, \btau }_{\Omega_s}  +  \inner*{ \cA_f\bsig, \btau}_{\Omega_f}  
	\\
	&+ B_h(\bsig, \ubv) - B_h(\btau, \underline i(\bu)) +\dual{\tfrac{(k+1)^2}{h_\cF}\jump{\underline i(\bu)}, \jump{\ubv}}_{\partial \cT_h} 
= \inner*{\bF(t), \bv}_{\cT_h} 
\end{split} 
\end{align}
for all $\btau \in \mathcal H_h$ and $\ubv = (\bv, \hat \bv)\in \underline{\cV}_h$. 
\end{proposition}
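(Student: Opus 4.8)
The plan is to exploit the linearity of \eqref{consistent} in the test pair $(\btau, \ubv)$ and to verify the identity separately for the two independent choices $\btau = \mathbf 0$ and $\ubv = \underline{\mathbf 0}$. The starting observation, which simplifies everything, is that since $\bu(t) \in H^1_0(\Omega,\bbR^d)$ the face trace $\hat\bu := \bu|_{\cF_h}$ is single-valued and coincides with $\bu|_{\partial\cT_h}$, so that $\jump{\underline{i}(\bu)} = \mathbf 0$. This annihilates the penalty contribution $\dual{\tfrac{(k+1)^2}{h_\cF}\jump{\underline{i}(\bu)}, \jump{\ubv}}_{\partial\cT_h}$ and reduces $B_h(\btau, \underline{i}(\bu))$ to $\inner{\btau, \beps(\bu)}_{\cT_h} = \inner{\btau, \beps(\bu)}_{\Omega}$.

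For the velocity component I would set $\btau = \mathbf 0$ and take an arbitrary $\ubv = (\bv, \hat\bv)\in \underline{\cV}_h$. Testing the first equation of \eqref{var:golbal} with the admissible function $\bv\in \cP_{k+1}(\cT_h,\bbR^d)\subset L^2(\Omega,\bbR^d)$, I rewrite $\inner{\rho\dot\bu,\bv}_{\cT_h}$ as $\inner{\bdiv\bsig,\bv}_{\Omega}+\inner{\bF,\bv}_{\cT_h}$, so the claim collapses to $\inner{\bdiv\bsig,\bv}_{\cT_h}+B_h(\bsig,\ubv)=0$. Applying the Green formula \eqref{green} element by element gives $\inner{\bdiv\bsig,\bv}_{\cT_h}+\inner{\bsig,\beps(\bv)}_{\cT_h}=\dual{\bsig\bn,\bv}_{\partial\cT_h}$; combining this with the definition of $B_h$ and the identity $\jump{\ubv}|_{\partial K}=\bv-\hat\bv$, the left-hand side reduces to $\dual{\bsig\bn,\hat\bv}_{\partial\cT_h}$. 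This term vanishes: on every interior face the two element contributions cancel because $\bsig\in H(\bdiv,\Omega,\bbS)$ has a continuous normal trace while $\hat\bv$ is single-valued, and on boundary faces $\hat\bv=\mathbf 0$.

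For the stress component I would set $\ubv=\underline{\mathbf 0}$ and take an arbitrary $\btau\in\cH_h$. The hard part is precisely here: since $\cH_h=\cP_k(\cT_h,\bbS)$ is \emph{not} contained in $H(\bdiv,\Omega,\bbS)$, the second equation of \eqref{var:golbal} cannot be tested with $\btau$ directly. I would circumvent this by invoking the strong constitutive relations $\cA_s\dot\bsig=\beps(\bu)$ in $\Omega_s$ and $\cA_f\bsig=\beps(\bu)$ in $\Omega_f$, which were established as pointwise $L^2$ identities in the proof of the well-posedness theorem. Because they hold against arbitrary $L^2$-tensor test functions, I obtain $\inner{\cA_s\dot\bsig,\btau}_{\Omega_s}+\inner{\cA_f\bsig,\btau}_{\Omega_f}=\inner{\beps(\bu),\btau}_{\Omega}=B_h(\btau,\underline{i}(\bu))$, which is exactly the identity required for this component.

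Adding the two verified identities and restoring the (vanishing) penalty term reproduces \eqref{consistent} for general $(\btau,\ubv)$ by linearity. The only genuine subtlety is the non-conformity of $\cH_h$ in $H(\bdiv,\Omega,\bbS)$, which forces the argument to lean on the strong form of the constitutive law rather than on \eqref{var:golbal} tested with discrete stresses; the regularity hypothesis $\bsig\in\cC^0_{[0,T]}(H^s(\cT_h,\bbS)\cap H(\bdiv,\Omega,\bbS))$ with $s>\sfrac12$ enters precisely to make the element-wise normal traces $\bsig\bn\in L^2(\partial\cT_h)$ meaningful in the Green-formula step and to justify their cancellation across interior faces.
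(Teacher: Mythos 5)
Your proof is correct, and its skeleton coincides with the paper's: both rest on (i) the observation $\jump{\underline i(\bu)} = \mathbf 0$, which removes the penalty term and reduces $B_h(\btau,\underline i(\bu))$ to $\inner{\btau,\beps(\bu)}_{\cT_h}$; (ii) elementwise integration by parts plus the single-valuedness of $\bsig\bn$ and of $\hat\bv$ (with $\hat\bv=\mathbf 0$ on $\cF_h^\partial$), combined with the momentum equation, to dispose of $B_h(\bsig,\ubv)$; and (iii) an extension of the stress equation to test tensors $\btau\in\cH_h\not\subset H(\bdiv,\Omega,\bbS)$. The only substantive divergence is in step (iii): the paper applies Green's formula \eqref{green} to the second equation of \eqref{var:golbal} (the boundary term vanishing because $\bu(t)\in H^1_0(\Omega,\bbR^d)$) and then uses the density of $H(\bdiv,\Omega,\bbS)$ in $\cH$ to obtain $\inner{\cA_s\dot\bsig,\btau}_{\Omega_s}+\inner{\cA_f\bsig,\btau}_{\Omega_f}=\inner{\beps(\bu),\btau}_{\Omega}$ for all $\btau\in\cH$, hence for $\btau\in\cH_h$; you instead invoke the pointwise constitutive identities $\cA_s\dot\bsig=\beps(\bu)$ in $\Omega_s$ and $\cA_f\bsig=\beps(\bu)$ in $\Omega_f$, which the paper does establish (for a.e.\ $t$) inside the proof of the well-posedness theorem, and integrate them against $\btau\in\cH_h\subset L^2(\Omega,\bbS)$. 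Both routes are legitimate and are in fact cousins --- the strong identities are themselves recovered in that earlier proof by a distributional argument of the same density flavor. Yours is marginally more direct and makes the role of the recovered strong form explicit, but it cites a fact proved (not stated) in an earlier theorem, so a self-contained write-up would need to repeat that short distributional step; the paper's version stays entirely at the level of the weak formulation. Your splitting of the test pair into $(\btau,\underline{\mathbf 0})$ and $(\mathbf 0,\ubv)$, and the reordering of the integration by parts in the velocity part, are cosmetic: the computations are the same.
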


\begin{proof}
The continuity of the normal components of $\bsig$ across the interelements of $\cT_h$ gives 
  \begin{equation*}
   B_h(\bsig,\ubv) = \inner{\bsig, \beps(\bv)}_{\cT_h} - \dual*{ \bsig\bn,  \jump{\ubv}}_{\partial \cT_h} = \inner{\bsig, \nabla \bv}_{\cT_h} - \dual*{ \bsig\bn,  \bv}_{\partial \cT_h}\quad \forall\ubv = (\bv, \hat \bv)\in \underline{\cV}_h.
  \end{equation*}
 Applying an elementwise integration by parts to the right-hand side  of the last identity followed by the substitution $\bdiv \bsig = \rho \dot\bu - \bF$ yields 
\begin{equation}\label{b2}
   B_h(\bsig,\ubv)  = - \inner{\bdiv\bsig, \bv}_{\cT_h} = -\inner{\rho\dot\bu, \bv}_{\cT_h} + \inner{\bF(t), \bv}_{\Omega}.
  \end{equation}
On the other hand, as $\jump{\underline i (\bu)} = \mathbf 0$, we have that 
\begin{equation}\label{b1}
  B_h(\btau, \underline i(\bu)) = \inner{ \btau, \beps(\bu) }_{\cT_h}\quad \forall \btau \in \cH_h.
\end{equation}
Substituting back \eqref{b2} and \eqref{b1} in \eqref{consistent}  gives  
\begin{align}\label{consistent0}
\nonumber
\inner{\rho \dot\bu, \bv}_{\cT_h} &+ \inner{\cA_s\dot{\bsig}, \btau }_{\Omega_s}  +  \inner*{ \cA_f\bsig, \btau}_{\Omega_f}  
+ B_h(\bsig, \ubv) - B_h(\btau, \underline i(\bu)) +\dual{\tfrac{(k+1)^2}{h_\cF}\jump{\underline i(\bu)}, \jump{\ubv}}_{\partial \cT_h} 
\\
&= \inner{\cA_s\dot{\bsig}, \btau }_{\Omega_s}  +  \inner*{ \cA_f\bsig, \btau}_{\Omega_f}  
	 - \inner{ \btau, \beps(\bu) }_{\cT_h} +  \inner{\bF, \bv}_{\cT_h},
\end{align}
for all $\btau\in \mathcal H_h$ and $\ubv = (\bv, \hat \bv)\in \underline{\cV}_h$. Moreover, applying Green's formula \eqref{green} in the second equation of \eqref{var:golbal} and keeping in mind the density of the embedding $H(\bdiv,\Omega,\bbS) \hookrightarrow \cH$ we obtain
\begin{equation*}
	\inner{\cA_s\dot{\bsig}, \btau }_{\Omega_s}  +  \inner*{ \cA_f\bsig, \btau}_{\Omega_f} = \inner*{\beps(\bu) , \btau }_\Omega 
\quad \forall \btau\in \cH.
 \end{equation*}
Using  this identity in \eqref{consistent0} gives the result.   
\end{proof}

\section{Convergence analysis of the HDG method}\label{sec:convergence}

The convergence analysis of the HDG method \eqref{sd} follows standard procedures. Using the stability of the HDG method and the consistency result \eqref{consistent}, we can estimate the projected errors $\bpi_h(t) :=\Pi_\cT^k\bsig(t) - \bsig_h(t)$ and $\ube_h(t):= \Xi^{k+1}_\cT \underline i (\bu(t)) - \ubu_h(t)$ in the stress and velocity variables, respectively, in terms of $\bchi(t) := \bsig(t) - \Pi\cT^k\bsig(t)$ and $\ubq(t) := \underline i (\bu(t)) - \Xi^{k+1}_\cT \underline i (\bu(t))$.
We notice that the components of $\ube_h:= (\be_h, \hat\be_h)\in \cC^1(\underline{\cV}_h)$ are given by
\begin{equation*}   
	 \be_h = \Pi^{k+1}_\cT \bu - \bu_h\quad \text{and}\quad   \hat\be_h = \Pi_\cF^{k+1}\hat\bu - \hat\bu_h,
\end{equation*}
where we recall that $\hat \bu = \bu|_{\cF_h}$. Similarly, $\ubq = (\bq, \hat\bq)$ with  
\begin{equation*}  
	 \bq = \bu - \Pi^{k+1}_\cT \bu  \quad \text{and}\quad   \hat\bq = \hat\bu - \Pi_\cF^{k+1}\hat\bu.
\end{equation*}

\begin{lemma}\label{stab_sd}
	Under the conditions of Proposition~\ref{consistency},  there exists a constant $C>0$ independent of $h$ and $k$ such that 
	\begin{equation}\label{stab}
	\int_{0}^{T}\norm{\bpi_h(t)}^2_\cH\, \mathrm{d}t  +  \max_{[0, T]}\norm{ \be_h(t)}^2_{0,\cT_h} + \int_0^T \norm{\tfrac{k+1}{h_\cF^{\sfrac{1}{2}}} \jump{\ube_h}  }^2_{0,\partial \cT_h}\mathrm{d}t
	 \leq C \int_0^T \Big(\norm{ \ubq}^2_{\underline{\cV}} + \norm{\tfrac{h_\cF^{\sfrac{1}{2}}}{k+1} \bchi\bn}^2_{0,\partial \cT_h}     \Big) \mathrm{d}t.
	\end{equation} 
	\end{lemma}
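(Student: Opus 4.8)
The plan is to run the standard HDG energy argument, organized around the \emph{error equation} obtained by subtracting the discrete scheme \eqref{sd} from the consistency identity \eqref{consistent} of Proposition~\ref{consistency}. Writing the total errors as $\bsig-\bsig_h = \bchi + \bpi_h$, $\underline i(\bu)-\ubu_h = \ubq + \ube_h$ and, for the velocity component, $\bu-\bu_h = \bq + \be_h$, the decisive simplifications come from the $L^2$-orthogonality of $\Pi_\cT^k$ and $\Pi_\cT^{k+1}$. Since $\rho$ is piecewise constant and $\cA_s,\cA_f$ are constant tensors mapping $\cH_h$ into itself, the approximation terms $\inner{\rho\dot\bq,\bv}_{\cT_h}$, $\inner{\cA_s\dot\bchi,\btau}_{\Omega_s}$ and $\inner{\cA_f\bchi,\btau}_{\Omega_f}$ vanish against discrete test functions; and because $\beps(\bv)\in\cH_h$ for $\bv\in\cP_{k+1}(\cT_h,\bbR^d)$, the volumetric part of $B_h(\bchi,\ubv)$ vanishes as well, so that $B_h(\bchi,\ubv)=-\dual{\bchi\bn,\jump{\ubv}}_{\partial\cT_h}$. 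One is then left with the error equation
\begin{equation*}
\begin{aligned}
&\inner{\rho\dot\be_h, \bv}_{\cT_h} + \inner{\cA_s\dot\bpi_h, \btau}_{\Omega_s} + \inner{\cA_f\bpi_h, \btau}_{\Omega_f} + B_h(\bpi_h, \ubv) - B_h(\btau, \ube_h) + \dual{\tfrac{(k+1)^2}{h_\cF}\jump{\ube_h}, \jump{\ubv}}_{\partial\cT_h}
\\
&\qquad = \dual{\bchi\bn, \jump{\ubv}}_{\partial\cT_h} + B_h(\btau, \ubq) - \dual{\tfrac{(k+1)^2}{h_\cF}\jump{\ubq}, \jump{\ubv}}_{\partial\cT_h},
\end{aligned}
\end{equation*}
valid for all $\btau\in\cH_h$ and $\ubv\in\underline{\cV}_h$.

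The next step is to test this identity with $\btau=\bpi_h$ and $\ubv=\ube_h$. The two $B_h$ terms then cancel, the time-derivative contributions assemble into $\tfrac12\tfrac{d}{dt}\big(\norm{\rho^{\sfrac{1}{2}}\be_h}_{0,\cT_h}^2+\norm{\cA_s^{\sfrac{1}{2}}\bpi_h}_{0,\Omega_s}^2\big)$, and the remaining left-hand terms $\norm{\cA_f^{\sfrac{1}{2}}\bpi_h}_{0,\Omega_f}^2$ and $\norm{\tfrac{k+1}{h_\cF^{\sfrac{1}{2}}}\jump{\ube_h}}_{0,\partial\cT_h}^2$ are nonnegative. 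Integrating over $[0,t]$ and invoking the discrete initial data \eqref{initial-R1-R2-h*c}, which force $\be_h(0)=\mathbf 0$ and $\bpi_h|_{\Omega_s}(0)=\mathbf 0$, removes the boundary contributions at $t=0$. On the right I would estimate the two duality pairings by Cauchy--Schwarz after redistributing the weights ($\tfrac{h_\cF^{\sfrac{1}{2}}}{k+1}\bchi\bn$ against $\tfrac{k+1}{h_\cF^{\sfrac{1}{2}}}\jump{\ube_h}$, and $\tfrac{k+1}{h_\cF^{\sfrac{1}{2}}}\jump{\ubq}$ against $\tfrac{k+1}{h_\cF^{\sfrac{1}{2}}}\jump{\ube_h}$), and bound $B_h(\bpi_h,\ubq)$ by the continuity estimate \eqref{Bhh}, since $\bpi_h\in\cH_h$; Young's inequality with a small parameter then absorbs the $\jump{\ube_h}$ and $\Omega_f$ contributions into the left-hand side.

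The main obstacle is the structural asymmetry between the two media: the fluid stress enters coercively through $\norm{\cA_f^{\sfrac{1}{2}}\bpi_h}_{0,\Omega_f}^2$, whereas the solid stress enters only through the time derivative $\tfrac{d}{dt}\norm{\cA_s^{\sfrac{1}{2}}\bpi_h}_{0,\Omega_s}^2$. Consequently, when $B_h(\bpi_h,\ubq)$ is bounded by \eqref{Bhh}, the resulting term $\tfrac{\varepsilon}{2}\norm{\bpi_h}_\cH^2$ has a solid part $\tfrac{\varepsilon}{2}\norm{\cA_s^{\sfrac{1}{2}}\bpi_h}_{0,\Omega_s}^2$ that cannot be absorbed, because its time integral is not present on the left. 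I would instead keep $\int_0^t\norm{\cA_s^{\sfrac{1}{2}}\bpi_h}_{0,\Omega_s}^2$ on the right and close the argument with Grönwall's inequality, bounding $\norm{\cA_s^{\sfrac{1}{2}}\bpi_h(s)}_{0,\Omega_s}^2$ by the running energy functional.

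This delivers a uniform-in-time bound on $\max_{[0,T]}\big(\norm{\be_h}_{0,\cT_h}^2+\norm{\cA_s^{\sfrac{1}{2}}\bpi_h}_{0,\Omega_s}^2\big)$ together with $\int_0^T\big(\norm{\cA_f^{\sfrac{1}{2}}\bpi_h}_{0,\Omega_f}^2+\norm{\tfrac{k+1}{h_\cF^{\sfrac{1}{2}}}\jump{\ube_h}}_{0,\partial\cT_h}^2\big)$ in terms of the right-hand data. Finally, to recover $\int_0^T\norm{\bpi_h}_\cH^2$ I would add the fluid integral to $\int_0^T\norm{\cA_s^{\sfrac{1}{2}}\bpi_h}_{0,\Omega_s}^2\le T\max_{[0,T]}\norm{\cA_s^{\sfrac{1}{2}}\bpi_h}_{0,\Omega_s}^2$, and use \eqref{normHs} to pass, where needed, between the weighted and plain $L^2$ norms, which yields \eqref{stab}.
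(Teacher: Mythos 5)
Your proposal is correct, and up to the final absorption step it coincides with the paper's proof of Lemma~\ref{stab_sd}: the same error equation obtained from the consistency identity \eqref{consistent} and the $L^2$-orthogonality of $\Pi_\cT^k$, $\Pi_\cT^{k+1}$ (including the observation $\beps(\cP_{k+1}(\cT_h,\bbR^d))\subset\cH_h$, which reduces $B_h(\bchi,\ubv)$ to the boundary pairing), the same test functions $\btau=\bpi_h$, $\ubv=\ube_h$, the same energy identity, vanishing initial projected errors, and the same splitting of the right-hand side via Cauchy--Schwarz and the continuity bound \eqref{Bhh}.

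Where you diverge is the closing argument, and there your diagnosis is only half right. It is true that if Young's inequality is applied \emph{pointwise in time}, the solid contribution $\tfrac{\varepsilon}{2}\norm{\cA_s^{\sfrac12}\bpi_h}^2_{0,\Omega_s}$ cannot be absorbed, and your remedy --- keeping $\int_0^t\norm{\cA_s^{\sfrac12}\bpi_h(s)}^2_{0,\Omega_s}\,\mathrm{d}s$ on the right and invoking Gr\"onwall --- is a valid way to close the estimate, with a constant of the form $e^{cT}$ that is still independent of $h$ and $k$. The paper, however, avoids Gr\"onwall by applying Cauchy--Schwarz \emph{in time} first, producing the factor $\bigl(\int_0^T\norm{\bpi_h}^2_\cH\,\mathrm{d}t\bigr)^{\sfrac12}\bigl(\int_0^T\norm{\ubq}^2_{\underline{\cV}}\,\mathrm{d}t\bigr)^{\sfrac12}$, and then exploiting the fact that the energy inequality holds for \emph{every} $t\in(0,T]$, so that $\max_{[0,T]}\norm{\cA_s^{\sfrac12}\bpi_h}^2_{0,\Omega_s}$ is itself controlled by the left-hand side. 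Since
\[
\int_0^T\norm{\bpi_h}^2_\cH\,\mathrm{d}t \;\le\; T\,\max_{[0,T]}\norm{\cA_s^{\sfrac12}\bpi_h}^2_{0,\Omega_s} + \int_0^T\norm{\cA_f^{\sfrac12}\bpi_h}^2_{0,\Omega_f}\,\mathrm{d}t,
\]
both pieces can then be absorbed by Young's inequality (equivalently, by solving the resulting quadratic inequality for the total energy). So the absorption route you dismissed does work once it is performed at the integral-in-time level; both arguments yield the same $(h,k)$-uniform, $T$-dependent constant, and the difference is purely one of technique.
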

	\begin{proof}
	By virtue of the consistency property \eqref{consistent},  it is straightforward that 
	 \begin{align}\label{orthog} 
	\begin{split}
			  &\inner{\rho \dot\be_h, \bv}_{\cT_h} + \inner{\cA_s\dot{\bpi}_h, \btau }_{\Omega_s}  +   \inner{ \cA_f  \bpi_h, \btau}_{\Omega_f}  
		+ B_h(\bpi_h, \ubv) - B_h(\btau, \ube_h) +\dual{\tfrac{(k+1)^2}{h_\cF}\jump{\ube_h}, \jump{\ubv}}_{\partial \cT_h}
		\\ &
	\qquad = -  \inner{\rho \dot\bq, \bv}_{\cT_h} -\inner{\cA_s\dot{\bchi}, \btau }_{\Omega_s}   -\inner*{ \cA_f  \bchi, \btau}_{\Omega_f}  
		- B_h(\bchi, \ubv) + B_h(\btau, \ubq) -\dual{\tfrac{(k+1)^2}{h_\cF}\jump{\bchi}, \jump{\ubv}}_{\partial \cT_h}
		\\ &
	\qquad  = \dual{\bchi\bn, \jump{\ubv}}_{\partial \cT_h} + B_h(\btau, \ubq) - \dual{\tfrac{(k+1)^2}{h_\cF}\jump{\ubq}, \jump{\ubv}}_{\partial \cT_h}, \quad \forall \btau\in \mathcal H_h\ \forall \ubv = (\bv, \hat \bv)\in \underline{\cV}_h.
	\end{split} 
	\end{align}
	The last identity follows from the orthogonal properties  
	\begin{equation*}
		  \inner{\cA_s\dot{\bchi}, \btau }_{\Omega_s} + \inner{ \cA_f  \bchi, \btau}_{\Omega_f}  = \inner{\dot\bsig - \Pi_\cT^k \dot\bsig, \cA_s\btau_E}
	  +  \inner{ \cA_f \bsig - \Pi_\cT^k \bsig, \cA_f\btau}_{\Omega_f}
	  = 0 \quad \forall \btau \in \cH_h,
	\end{equation*}
	 \[
	  \inner{\rho \dot\bq, \bv}_{\cT_h} = \inner{\dot\bu - \Pi_\cT^{k+1}\dot\bu, \rho\bv}_{\Omega} =0 \quad \forall \bv\in \cP_{k+1}(\cT_h, \bbR^d), 
	\]
	and  
	\[
	  B_h(\bchi, \ubv) = \inner{ \bsig - \Pi_\cT^k \bsig, \beps(\bv) }_{\cT_h} - \dual{\bchi\bn, \jump{\ubv}}_{\partial \cT_h} = - \dual{\bchi\bn, \jump{\ubv}}_{\partial \cT_h}\quad \forall \ubv \in \underline{\cV}_h,
	\]
	where we took into account here the inclusion $\beps(\cP_{k+1}(\cT_h, \bbR^d)) \subset \cP_k(\cT_h, \bbS)$.

	 The choices $\btau = \bpi_h$ and $\ubv = \ube_h$ in \eqref{orthog} and the Cauchy-Schwarz inequality together with \eqref{Bhh} yield 
	\begin{align*}
		\frac12 \frac{\text{d}}{\text{d}t} &\big\{\norm{\rho^{\sfrac12} \be_h}^2_{\cT_h} + \norm{\cA_s^{\sfrac12}\bpi_h}^2_{0,\Omega_s}  \big\} + \norm{ \cA_f^{\sfrac12}  \bpi_h}^2_{0,\Omega_f} + \norm{\tfrac{k+1}{h_\cF^{\sfrac{1}{2}}}\jump{\ube_h}}^2_{0,\partial \cT_h}
		\\
		&\leq 
		\dual{\bchi\bn, \jump{\ube_h}}_{\partial \cT_h} 
		+ B_h(\bpi_h, \ubq) - \dual{\tfrac{(k+1)^2}{h_\cF}\jump{\ubq}, \jump{\ube_h}}_{\partial \cT_h}
		\\
		&\leq \norm{\tfrac{h_\cF^{\sfrac{1}{2}}}{k+1} \bchi\bn}_{0,\partial \cT_h} 
		\norm{\tfrac{k+1}{h_\cF^{\sfrac{1}{2}}} \jump{\ube_h}}_{0,\partial \cT_h} + C  \norm{\bpi_h}_\cH  \norm{ \ubq}_{\underline{\cV}} 
		 + \norm{\tfrac{k+1}{h_\cF^{\sfrac{1}{2}}} \jump{\ubq}  }_{0,\partial \cT_h} 
		\norm{\tfrac{k+1}{h_\cF^{\sfrac{1}{2}}} \jump{\ube_h}  }_{0,\partial \cT_h}. 
	\end{align*}
	We notice that, because of assumption \eqref{initial-R1-R2-h*c}, the projected errors  satisfy vanishing initial conditions, namely, $\bpi_h(0) = \mathbf 0$  and $\ube_{h}(0) = (\mathbf 0, \mathbf 0)$. Hence, integrating over $t\in (0, T]$ and using again the Cauchy-Schwarz inequality we deduce that 
	\begin{align*}
		&\norm{\rho^{\sfrac{1}{2}} \be_h(t)}^2_{0,\cT_h}  + \norm{\cA_s^{\sfrac12}\bpi_h(t)}^2_{0,\Omega_s} + \int_0^t \norm{\cA_f^{\sfrac12} \bpi_h(s) }^2_{0,\Omega_f} \,\text{d}s+ \int_0^t \norm{\tfrac{k+1}{h_\cF^{\sfrac{1}{2}}} \jump{\ube_h}  }^2_{0,\partial \cT_h}\,\text{d}s
		\\
		&\qquad \lesssim \Big(\int_0^T (\norm{\tfrac{h_\cF^{\sfrac{1}{2}}}{k+1} \bchi\bn}^2_{0,\partial \cT_h}  +  \norm{\tfrac{k+1}{h_\cF^{\sfrac{1}{2}}} \jump{\ubq}  }^2_{0,\partial \cT_h})  \text{d}t\Big)^{\sfrac{1}{2}}  
		\Big( \int_0^T \norm{\tfrac{k+1}{h_\cF^{\sfrac{1}{2}}} \jump{\ube_h}  }^2_{0,\partial \cT_h}\ \text{d}t\Big)^{\sfrac{1}{2}}
		\\
		&\qquad \qquad + \Big(\int_{0}^{T}\norm{\bpi_h(t)}^2_\cH \text{d}t\Big)^{\sfrac12} \Big( \int_0^T \norm{ \ubq}^2_{\underline{\cV}} \text{d}t \Big)^{\sfrac12},\quad \forall t\in (0, T].
		 \end{align*}
	Finally, a simple application of Young's inequality yields 
	\begin{align*}
		 &\max_{[0, T]}\norm{ \be_h(t)}^2_{0,\cT_h} + \max_{[0,T]}\norm{\cA_s^{\sfrac12}\bpi_h(t)}^2_{0,\Omega_s}  +  \int_0^T \norm{\cA_f^{\sfrac12} \bpi_h(t) }^2_{0,\Omega_f} \text{d}t
		\\
		&\qquad \qquad + \int_0^T \norm{\tfrac{k+1}{h_\cF^{\sfrac{1}{2}}} \jump{\ube_h}  }^2_{0,\partial \cT_h}\,\text{d}t \lesssim \int_0^T \Big(\norm{\tfrac{h_\cF^{\sfrac{1}{2}}}{k+1} \bchi\bn}^2_{0,\partial \cT_h}  +  \norm{ \ubq}^2_{\underline{\cV}}\Big) \, \text{d}t,
	\end{align*}
	and the result follows.
	\end{proof}
	
	As a consequence of the stability estimate \eqref{stab}, we immediately have the following convergence result for the HDG method \eqref{var:golbal_h}-\eqref{init0_h}.
	\begin{theorem}\label{hpConv}
		Let 
		\[
   \bsig\in H^1_{[0, T]}(\cH)\cap L^2_{[0,T]}(H(\bdiv,\Omega,\bbS))
  \quad \text{and} \quad 
  \bu\in W^{1,\infty}_{[0,T]}(L^2(\Omega,\bbR^d))\cap L^2_{[0,T]}(H^1_0(\Omega,\bbR^d))
  \]
		be the solutions of \eqref{var:golbal}-\eqref{init0}. Assume that $\bsig \in L^2_{[0,T]}(H^{1+r}(\Omega_s\cup \Omega_f, \bbS))$ and $\bu \in \cC^0_{[0,T]}(H^{2+r}( \Omega_s\cup \Omega_f, \bbR^d))$, with $r\geq 0$. Then, there exists a constant $C>0$ independent of $h$ and $k$ such that 
		\begin{align*}
			 &\max_{[0, T]}\norm{(\bu - \bu_h)(t)}_{0,\cT_h}  + \left( \int_{0}^{T}\norm{(\bsig - \bsig_h)(t)}^2_{\cH}\mathrm{d}t\right)^{\sfrac{1}{2}}  + \left( \int_0^T \norm{\tfrac{k+1}{h_\cF^{\sfrac{1}{2}}}\jump{\underline i(\bu) - \ubu_h}}^2_{0,\partial \cT_h}\,\mathrm{d}t\right)^{\sfrac{1}{2}}
			\\
			&\quad \leq  C \tfrac{h_K^{\min\{ r, k \}+1}}{(k+1)^{r+\sfrac12}} \Big( \max_{[0,T]}\norm{\bu}^2_{2+r,\Omega_j} + \int_{0}^{T}\norm*{\btau}^2_{1+r,\Omega_s\cup \Omega_f} \mathrm{d}t  \Big)^{\sfrac{1}{2}}\quad \forall k\geq 0. 
			\end{align*}
		\end{theorem}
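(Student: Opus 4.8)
The plan is to treat this as a direct corollary of the stability estimate \eqref{stab} combined with the approximation results of Lemmas~\ref{maintool} and~\ref{maintool2}; the only genuine work is a careful triangle-inequality splitting and the bookkeeping of the temporal norms. First I would decompose each error into a projection part and a discrete (projected-error) part,
\[
\bu - \bu_h = \bq + \be_h, \qquad \bsig - \bsig_h = \bchi + \bpi_h, \qquad \underline i(\bu) - \ubu_h = \ubq + \ube_h,
\]
so that, using $\jump{\underline i(\bu)} = \mathbf 0$, the skeleton term satisfies $\jump{\underline i(\bu) - \ubu_h} = \jump{\ubq} + \jump{\ube_h}$. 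A triangle inequality then reduces each of the three terms on the left-hand side to a discrete-error contribution plus a pure projection-error contribution.

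For the discrete-error contributions $\max_{[0,T]}\norm{\be_h}_{0,\cT_h}$, $\int_0^T\norm{\bpi_h}^2_\cH\,\mathrm{d}t$, and $\int_0^T\norm{\tfrac{k+1}{h_\cF^{\sfrac{1}{2}}}\jump{\ube_h}}^2_{0,\partial\cT_h}\,\mathrm{d}t$, I would invoke Lemma~\ref{stab_sd} directly, which bounds all of them by $\int_0^T\big(\norm{\ubq}^2_{\underline{\cV}} + \norm{\tfrac{h_\cF^{\sfrac{1}{2}}}{k+1}\bchi\bn}^2_{0,\partial\cT_h}\big)\,\mathrm{d}t$. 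Into the first summand I would insert Lemma~\ref{maintool2}, which supplies the rate $\tfrac{h^{\min\{r,k\}+1}}{(k+1)^{r+\sfrac{1}{2}}}$ against $\big(\sum_j\norm{\bu}^2_{2+r,\Omega_j}\big)^{\sfrac12}$, and into the second the boundary-trace term of Lemma~\ref{maintool}, whose rate $\tfrac{h^{\min\{r,k\}+1}}{(k+1)^{r+1}}$ (measured against $\norm{\bsig}_{1+r,\Omega_s\cup\Omega_f}$) is strictly better in the polynomial degree and is therefore absorbed. This pinpoints Lemma~\ref{maintool2} as the source of the half-power loss $(k+1)^{r+\sfrac12}$ in the final estimate.

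For the remaining projection-error contributions I would bound $\int_0^T\norm{\bchi}^2_\cH\,\mathrm{d}t$ and $\int_0^T\norm{\tfrac{k+1}{h_\cF^{\sfrac{1}{2}}}\jump{\ubq}}^2_{0,\partial\cT_h}\,\mathrm{d}t$ using the first term of Lemma~\ref{maintool} and the $\underline{\cV}$-norm of Lemma~\ref{maintool2}, respectively (note $\norm{\tfrac{k+1}{h_\cF^{\sfrac{1}{2}}}\jump{\ubq}}_{0,\partial\cT_h}\le\norm{\ubq}_{\underline{\cV}}$ by \eqref{norm:sym}). The one quantity lying outside the scope of \eqref{stab} is $\max_{[0,T]}\norm{\bq}_{0,\cT_h}$, the $L^2$-in-space error of the degree-$(k+1)$ projection of the velocity; invoking $\bu\in\cC^0_{[0,T]}(H^{2+r})$ together with the standard scalar $L^2$-projection estimate, this carries one extra order in $h$ relative to the advertised rate and is thus dominated. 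Matching the temporal norms is routine: the velocity contributions collapse to $\max_{[0,T]}\norm{\bu}^2_{2+r,\Omega_j}$ after using $\int_0^T\le T\max_{[0,T]}$, while the stress contributions stay under the integral $\int_0^T\norm{\bsig}^2_{1+r,\Omega_s\cup\Omega_f}\,\mathrm{d}t$, in agreement with the available regularity.

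Since consistency and the positivity of the stabilization were already absorbed into Lemma~\ref{stab_sd}, I do not expect any analytic difficulty here; the point requiring the most care is purely organizational, namely keeping the max-in-time norm for the velocity separate from the $L^2$-in-time norm for the stress and checking that every projection-error term—especially $\max_{[0,T]}\norm{\bq}_{0,\cT_h}$, which the stability bound does not see—decays at least as fast as $\tfrac{h^{\min\{r,k\}+1}}{(k+1)^{r+\sfrac12}}$.
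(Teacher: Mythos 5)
Your proposal is correct and follows essentially the same route as the paper: a triangle-inequality splitting into projected errors and projection errors, the stability estimate of Lemma~\ref{stab_sd}, and then the $hp$ approximation bounds \eqref{tool1} and \eqref{tool2}. In fact you are slightly more careful than the paper's terse proof, which silently absorbs the term $\max_{[0,T]}\norm{\bq(t)}_{0,\cT_h}$ into ``the result follows from \eqref{tool1} and \eqref{tool2}'', whereas you correctly note it needs the standard $L^2$-projection estimate and decays one order faster than the advertised rate.
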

		\begin{proof}
			It follows from the triangle inequality and \eqref{stab} that 
		\begin{align*}
			 &\max_{[0, T]}\norm{(\bu - \bu_h)(t)}^2_{0,\cT_h}  +  \int_{0}^{T}\norm{(\bsig - \bsig_h)(t)}^2_{\cH} \text{d}t + \int_0^T \norm{\tfrac{k+1}{h_\cF^{\sfrac{1}{2}}}\jump{\underline i(\bu) - \ubu_h}}^2_{0,\partial \cT_h}\,\text{d}t
			\\
			&\quad \leq \max_{[0, T]}\norm{\bq(t)}^2_{0,\cT_h} + \int_{0}^{T}\norm{\bchi(t)}^2_{\cH}\text{d}t  +     C \int_0^T \Big(\norm{ \ubq}^2_{\underline{\cV}} + \norm{\tfrac{h_\cF^{\sfrac{1}{2}}}{k+1} \bchi\bn}^2_{0,\partial \cT_h}     \Big) \ \mathrm{d}t, 
		\end{align*}
		and the result follows directly from the error estimates \eqref{tool1} and \eqref{tool2}.
		\end{proof}
		
\begin{remark}\label{R1}
 The energy norm error estimates provided by Theorem~\ref{hpConv} are quasi-optimal in $h$ and suboptimal by a factor of $k^{\sfrac{1}{2}}$ in $k$. This issue has been previously addressed in the literature, as shown in \cite{houston2002}, where similar bounds were obtained for stationary second-order elliptic problems. 
 \end{remark}

\section{The fully discrete scheme and its convergence analysis}\label{sec:fully-discrete}

Given $L\in \mathbb{N}$, we consider a uniform partition of the time interval $[0, T]$ with step size $\Delta t := T/L$ and nodes $t_n := n\,\Delta t$, $n=0,\ldots, L$. The midpoint of each time subinterval is represented as $t_{n+\sfrac{1}{2}}:= \frac{t_{n+1} + t_n}{2}$. For any finite sequence  $\set{\phi^n,\ n=0,\ldots,L}$ of real numbers,  we define  $\mean{\phi^n}:= \frac{\phi^{n+1} + \phi^n}{2}$ and  introduce the discrete time derivative $\partial_t \phi^n := \frac{\phi^{n+1} - \phi^n}{\Delta t}$. We adopt the same notation for sets of vectors or tensors.

In what follows we utilize the Crank-Nicolson method for the time discretisation of 
\eqref{sd}-\eqref{initial-R1-R2-h*c}. Namely, for  $n=0,\ldots,L-1$, 
we seek  $\bsig^{n+1}_h \in  \mathcal{H}_h$ and  $\ubu^{n+1}_h := (\bu^{n+1}_h, \hat\bu^{n+1}_h) \in \underline{\cV}_h$  solution of 
\begin{align}\label{fd}
\begin{split}
	\inner{\rho \partial_t\bu^n_h, \bv}_{\cT_h} &+ \inner{\cA_s\partial_t \bsig^n_h, \btau }_{\Omega_s}  +  \inner{ \cA_f  \mean{\bsig^{n}_h},  \btau}_{\Omega_f}  
	\\
	&+ B_h(\mean{\bsig^{n}_h}, \ubv) - B_h(\btau, \mean{\ubu^{n}_h}) +\dual{ \tfrac{(k+1)^2}{h_\cF}\jump{\mean{\ubu^{n}_h}}, \jump{\ubv}}_{\partial \cT_h} 
= \inner*{ \mean{\bF(t_{n})}, \bv}_{\cT_h}
\end{split}
\end{align}
for all $\btau\in \mathcal H_h$ and $\ubv = (\bv, \hat \bv)\in \underline{\cV}_h$. We assume that the scheme \eqref{fd} is initiated  with 
 \begin{equation}\label{initial-fd}
	\ubu_h^0 = \Xi^{k+1}_\cT \underline i (\bu^0) \quad \text{and} \quad 	\bsig_h^0|_{\Omega_s}= \Pi_\cT^k \bsig^0_s.
 \end{equation}  
 
We point out that each iteration step of \eqref{fd} requires solving a square system of linear equations whose matrix stems from the bilinear form 
\begin{align*}
	 \tfrac{1}{\Delta t} \inner{\rho\bu_h, \bv_h}_{\cT_h} &+ \tfrac{1}{\Delta t} \inner{\cA_s \bsig_h, \btau_h}_{\Omega_s} +  \tfrac{1}{2}  \inner{ \cA_f  \bsig_h, \omega \btau}_{\Omega_f} 
	\\
	&+ \tfrac{1}{2} B_h(\bsig_h, \ubv_h) - \tfrac{1}{2} B_h(\btau_h, \ubu_h) + \tfrac{1}{2}\dual{ \tfrac{(k+1)^2}{h_\cF}\jump{\ubu_h}, \jump{\ubv}}_{\partial \cT_h}.
\end{align*} 
The coerciveness of this bilinear form on $(\underline{\cV}_h \times \cH_h)\times (\underline{\cV}_h \times \cH_h)$ ensures the well-defined nature of the scheme \eqref{fd}-\eqref{initial-fd}. 

Our aim now is to obtain a fully discrete counterpart of Lemma~\ref{stab_sd}. We recall that, according to our notations, the components of the projected errors $\bpi_h^n\in \cH_h$ and $\ube_h^n:= (\be_h^n, \hat\be_h^n)\in \underline{\cV}_h$ are expressed as 
\begin{align*}
	\bpi_h^n =  \Pi_\cT^k\bsig(t_n) - \bsig_h^n,\quad   
	 \be_h^n = \Pi^{k+1}_\cT \bu(t_n) - \bu_h^n,\quad   \hat\be_h^n = \Pi_\cF^{k+1}\hat\bu(t_n) - \hat\bu_h^n,
\end{align*}
where $\hat \bu = \bu|_{\cF_h}$, while $\bchi^n$ and $\ubq^n = (\bq^n, \hat\bq^n)$ are given by  
\begin{align*}
	\bchi^n =  \bsig(t_n) - \Pi_\cT^k\bsig(t_n),\quad 
	 \bq^n = \bu(t_n) - \Pi^{k+1}_\cT \bu(t_n),\quad   \hat\bq^n = \hat\bu(t_n) - \Pi_\cF^{k+1}\hat\bu(t_n).
\end{align*}

\begin{lemma}\label{coco}
	Let 
\[
   \bsig\in H^1_{[0, T]}(\cH)\cap L^2_{[0,T]}(H(\bdiv,\Omega,\bbS))
  \quad \text{and} \quad 
  \bu\in W^{1,\infty}_{[0,T]}(L^2(\Omega,\bbR^d))\cap L^2_{[0,T]}(H^1_0(\Omega,\bbR^d))
\]
be the solutions of \eqref{var:golbal}-\eqref{init0} and assume that the stress tensor $\bsig$ belongs  to  $\cC_{[0,T]}^0(H^r(\cT_h, \bbS)\cap H(\bdiv, \Omega, \bbS))$, with $r>\sfrac{1}{2}$.   There exists a constant $C>0$ independent of $h$, $k$ and $\Delta t$ such that 
	\begin{align}\label{stabE}
		\begin{split}
			  &\max_n\norm{ \rho^{\sfrac{1}{2}}\be^{n}_h }_{0,\cT_h}^2  + \max_n \norm{ \cA_s^{\sfrac12}\bpi^{n}_h }_{0,\Omega_s}^2 + \Delta t  \sum_{n=0}^{L-1} \norm{\cA_f^{\sfrac12} \mean{\bpi^{n}_h}}^2_{0,\Omega_f} + \Delta t  \sum_{n=0}^{L-1}\norm{ \tfrac{(k+1)}{h_\cF^{\sfrac{1}{2}}}\jump{\mean{\ube^{n}_h}}}^2_{0,\partial \cT_h} 
			 \\ 
			  &\qquad \leq C \Big(  \Delta t\sum_{n=0}^{L-1} \norm{ \partial_t\bsig(t_n) - \mean{\dot\bsig(t_n)}}_\cH^2 + \Delta t\sum_{n=0}^{L-1} \norm{\rho^{\sfrac{1}{2}}(\partial_t \bu(t_n) -  \mean{ \dot{\bu}(t_{n})})}_{0,\cT_h}^2
			\\
			&  
			 \qquad \qquad \qquad \qquad  + \Delta t\sum_{n=0}^{L-1} \norm{\tfrac{h_\cF^{\sfrac{1}{2}}}{k+1} \mean{\bchi^{n}}\bn}_{0,\partial \cT_h}^2
		+ \Delta t\sum_{n=0}^{L-1}  \norm{\mean{\ubq^n}}_{\underline{\cV}}^2 \Big).
		\end{split}
	\end{align}
	\end{lemma}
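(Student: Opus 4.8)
The plan is to reproduce the argument of Lemma~\ref{stab_sd} at the discrete-in-time level, with the Crank--Nicolson operators $\partial_t$ and $\mean{\cdot}$ playing the roles of $\tfrac{\mathrm d}{\mathrm dt}$ and of the pointwise value, and with $\Delta t\sum_n$ replacing $\int_0^T$. First I would set up the discrete error equation. Since $\bsig\in\cC^0_{[0,T]}(H^r(\cT_h,\bbS)\cap H(\bdiv,\Omega,\bbS))$ with $r>\sfrac12$, Proposition~\ref{consistency} applies, so \eqref{consistent} holds at $t_n$ and $t_{n+1}$; averaging the two gives a relation for the averaged exact quantities tested against any $(\btau,\ubv)\in\cH_h\times\underline{\cV}_h$. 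Subtracting the scheme \eqref{fd}, inserting the splittings $\mean{\bsig(t_n)}-\mean{\bsig^n_h}=\mean{\bchi^n}+\mean{\bpi^n_h}$ and $\mean{\underline i(\bu(t_n))}-\mean{\ubu^n_h}=\mean{\ubq^n}+\mean{\ube^n_h}$, and writing the time differences as $\mean{\dot\bsig(t_n)}-\partial_t\bsig^n_h=\partial_t\bpi^n_h+(\mean{\dot\bsig(t_n)}-\partial_t\Pi^k_\cT\bsig(t_n))$ and $\mean{\dot\bu(t_n)}-\partial_t\bu^n_h=\partial_t\be^n_h+(\mean{\dot\bu(t_n)}-\partial_t\Pi^{k+1}_\cT\bu(t_n))$, I would invoke exactly the orthogonality relations used for \eqref{orthog} (that $\beps(\cP_{k+1}(\cT_h,\bbR^d))\subset\cP_k(\cT_h,\bbS)$, that $\rho$ and $\cA_s,\cA_f$ are elementwise constant, and that $\jump{\underline i(\bu)}=\mathbf 0$). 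Since the projections in the two time-truncation pieces can be moved onto the polynomial test functions and dropped, this yields the analogue of \eqref{orthog}:
\begin{align*}
\inner{\rho\partial_t\be^n_h,\bv}_{\cT_h}&+\inner{\cA_s\partial_t\bpi^n_h,\btau}_{\Omega_s}+\inner{\cA_f\mean{\bpi^n_h},\btau}_{\Omega_f}+B_h(\mean{\bpi^n_h},\ubv)-B_h(\btau,\mean{\ube^n_h})\\
&+\dual{\tfrac{(k+1)^2}{h_\cF}\jump{\mean{\ube^n_h}},\jump{\ubv}}_{\partial\cT_h}=\inner{\rho(\partial_t\bu(t_n)-\mean{\dot\bu(t_n)}),\bv}_{\cT_h}\\
&+\inner{\cA_s(\partial_t\bsig(t_n)-\mean{\dot\bsig(t_n)}),\btau}_{\Omega_s}+\dual{\mean{\bchi^n}\bn,\jump{\ubv}}_{\partial\cT_h}+B_h(\btau,\mean{\ubq^n})-\dual{\tfrac{(k+1)^2}{h_\cF}\jump{\mean{\ubq^n}},\jump{\ubv}}_{\partial\cT_h}
\end{align*}
for all $\btau\in\cH_h$ and $\ubv=(\bv,\hat\bv)\in\underline{\cV}_h$.

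Next I would choose $\btau=\mean{\bpi^n_h}$ and $\ubv=\mean{\ube^n_h}$. The two $B_h$ terms cancel, while the Crank--Nicolson identity $\inner{\rho\partial_t\be^n_h,\mean{\be^n_h}}_{\cT_h}=\tfrac{1}{2\Delta t}\big(\norm{\rho^{\sfrac12}\be^{n+1}_h}^2_{0,\cT_h}-\norm{\rho^{\sfrac12}\be^{n}_h}^2_{0,\cT_h}\big)$ and its $\cA_s$-counterpart turn the left-hand side into a telescoping increment plus the dissipation $\norm{\cA_f^{\sfrac12}\mean{\bpi^n_h}}^2_{0,\Omega_f}+\norm{\tfrac{k+1}{h_\cF^{\sfrac12}}\jump{\mean{\ube^n_h}}}^2_{0,\partial\cT_h}$. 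Multiplying by $\Delta t$, summing from $n=0$ to $m-1$, and using $\be^0_h=\mathbf0$ and $\bpi^0_h|_{\Omega_s}=\mathbf0$ (which hold by \eqref{initial-fd}), the telescoping collapses to the energy at step $m$. On the right I would apply Cauchy--Schwarz in space and then in the discrete sum: the two truncation terms give the first two data quantities of \eqref{stabE}, each multiplied by the discrete $\ell^2(L^2)$-norm of the matching error ($\mean{\bpi^n_h}|_{\Omega_s}$ or $\mean{\be^n_h}$); the $\mean{\bchi^n}$ and the jump-of-$\mean{\ubq^n}$ terms are handled by $\norm{\tfrac{h_\cF^{\sfrac12}}{k+1}\mean{\bchi^n}\bn}_{0,\partial\cT_h}$, $\norm{\mean{\ubq^n}}_{\underline{\cV}}$ and the dissipative jump term; and $B_h(\mean{\bpi^n_h},\mean{\ubq^n})$ is bounded by \eqref{Bhh} as $C\norm{\mean{\bpi^n_h}}_\cH\norm{\mean{\ubq^n}}_{\underline{\cV}}$, whose $\Omega_f$-part is absorbed by the dissipation and whose $\Omega_s$-part leaves $\norm{\cA_s^{\sfrac12}\mean{\bpi^n_h}}_{0,\Omega_s}\norm{\mean{\ubq^n}}_{\underline{\cV}}$.

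The main obstacle, exactly as in the semi-discrete case, is that the truncation terms and the $\Omega_s$-part of $B_h$ reintroduce the $L^\infty$-in-time energy on the right, since $\norm{\rho^{\sfrac12}\mean{\be^n_h}}_{0,\cT_h}$ and $\norm{\cA_s^{\sfrac12}\mean{\bpi^n_h}}_{0,\Omega_s}$ are each bounded by the maximum over $n$ of the corresponding nodal norm, and this quantity is not controlled by the dissipation. I would resolve this \emph{without} a discrete Gronwall inequality, exploiting that every right-hand side factor is a product of a data/projection error with an error. Writing $\mathcal E:=\max_n\big(\norm{\rho^{\sfrac12}\be^n_h}^2_{0,\cT_h}+\norm{\cA_s^{\sfrac12}\bpi^n_h}^2_{0,\Omega_s}\big)$ and $\mathrm{DATA}$ for the bracket on the right of \eqref{stabE}, I would evaluate the summed inequality at the index $m^\ast$ realising $\mathcal E$, bound the partial sums by the full ones, and absorb the dissipative remainders by Young's inequality, obtaining $\tfrac12\mathcal E\lesssim\mathrm{DATA}+\sqrt{T}\,\mathrm{DATA}^{\sfrac12}\,\mathcal E^{\sfrac12}$. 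The elementary implication $X^2\le a+bX\Rightarrow X^2\lesssim a+b^2$ then gives $\mathcal E\lesssim\mathrm{DATA}$, and substituting this back into the inequality at $m=L$ controls the remaining dissipation sum $\Delta t\sum_n\big(\norm{\cA_f^{\sfrac12}\mean{\bpi^n_h}}^2_{0,\Omega_f}+\norm{\tfrac{k+1}{h_\cF^{\sfrac12}}\jump{\mean{\ube^n_h}}}^2_{0,\partial\cT_h}\big)$, which yields \eqref{stabE}.
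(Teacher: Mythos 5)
Your proposal is correct and follows essentially the same route as the paper: the same averaged consistency/orthogonality argument leading to the discrete error equation \eqref{proE}, the same test functions $\btau=\mean{\bpi^n_h}$, $\ubv=\mean{\ube^n_h}$ giving telescoping energy plus dissipation, and the same key splitting of $\norm{\mean{\bpi^n_h}}_\cH$ into a max-controlled $\Omega_s$ part and a dissipation-controlled $\Omega_f$ part. The only cosmetic difference is in the final absorption, where you phrase it as evaluation at the maximizing index plus the quadratic inequality $X^2\le a+bX\Rightarrow X^2\lesssim a+b^2$, while the paper invokes Young's inequality with suitably chosen $\epsilon$; these are equivalent, and neither requires a discrete Gronwall lemma.
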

	\begin{proof}
	It follows from the consistency equation \eqref{consistent} and the orthogonality properties employed in the proof of Lemma~\ref{stab_sd}  that the projected errors $\bpi_h^n\in \cH_h$ and $\ube_h^n\in \underline{\cV}_h$ satisfy the equation  
	 \begin{align}\label{proE}
	\begin{split}
		&\inner{\rho \partial_t\be_h^n, \bv}_{\cT_h} + \inner{\cA_s\partial_t \bpi_h^n, \btau }_{\Omega_s}  +  \inner{\cA_f \mean{\bpi^{n}_h},  \btau}_{\Omega_f}  
		+ B_h(\mean{\bpi^{n}_h}, \ubv) - B_h(\btau, \mean{\ube^{n}_h}) 
		\\
	&\qquad +\dual{ \tfrac{(k+1)^2}{h_\cF}\jump{\mean{\ube^{n}_h}}, \jump{\ubv}}_{\partial \cT_h} = \inner{\rho (\partial_t \bu(t_n) -  \mean{ \dot{\bu}(t_{n})}), \bv}_{\cT_h} + \inner{\cA_s (\partial_t\bsig(t_n) - \mean{\dot\bsig(t_n)}), \btau}_{\Omega_s} 
		\\
		&\qquad \qquad + \dual{\mean{\bchi^{n}}\bn, \jump{\ubv}}_{\partial \cT_h}+ B_h(\btau, \mean{\ubq^{n}}) -\dual{ \tfrac{(k+1)^2}{h_\cF}\jump{\mean{\ubq^{n}}}, \jump{\ubv}}_{\partial \cT_h}
	\end{split}
	\end{align}
	for all $\btau\in \mathcal H_h$ and $\ubv = (\bv, \hat \bv)\in \underline{\cV}_h$. 
	Selecting $\btau = \mean{\bpi^n_h}$ and $\ubv = \mean{\ube^n_h}$ in equation \eqref{proE} and applying \eqref{Bhh} and the Cauchy-Schwartz inequality to the terms on the right-hand side we derive the estimate 
	\begin{align*}
		\begin{split}
			&\frac{1}{2\Delta t}\left(\norm{ \rho^{\sfrac{1}{2}}\be^{n+1}_h }_{0,\cT_h}^2 +  \norm{ \cA_s^{\sfrac12}\bpi^{n+1}_h }_{0,\Omega_s}^2 -  \norm{ \rho^{\sfrac{1}{2}}\be^{n}_h }_{0,\cT_h}^2   - \norm{ \cA_s^{\sfrac12}\bpi^{n}_h }_{0,\Omega_s}^2  \right) 
			\\
			 &\quad + \norm{\cA_f^{\sfrac12} \mean{\bpi^{n}_h}}^2_{0,\Omega_f}  
			+ \norm{ \tfrac{k+1}{h_\cF^{\sfrac{1}{2}}}\jump{\mean{\ube^{n}_h}}}^2_{0,\partial \cT_h}  
			\\
			&\quad \leq \norm{\rho^{\sfrac{1}{2}}(\partial_t \bu(t_n) -  \mean{ \dot{\bu}(t_{n})})}_{0,\cT_h} \norm{ \rho^{\sfrac{1}{2}}\mean{\be^{n}_h} }_{0,\cT_h} + \norm{\cA_s^{\sfrac12} (\partial_t\bsig(t_n) - \mean{\dot\bsig(t_n)} ) }_{0,\Omega_s} \norm{\cA_s^{\sfrac12}\mean{\bpi^n_h}}_{0,\Omega_s}
			\\
			&\quad     + \norm{\tfrac{h_\cF^{\sfrac{1}{2}}}{k+1} \mean{\bchi^{n}}\bn}_{0,\partial \cT_h} \norm{ \tfrac{k+1}{h_\cF^{\sfrac{1}{2}}}\jump{\mean{\ube^{n}_h}}}_{0,\partial \cT_h}
		  + \norm{\mean{\bpi^n_h}}_\cH \norm{\mean{\ubq^n}}_{\underline{\cV}} + \norm{ \tfrac{k+1}{h^{\sfrac{1}{2}}_\cF}\jump{\mean{\ubq^{n}}}}_{0,\partial \cT_h} \norm{ \tfrac{k+1}{h^{\sfrac{1}{2}}_\cF}\jump{\mean{\ube^n_h}}}_{0,\partial \cT_h}.
		\end{split}
	\end{align*}
	Summing in the index $n$ and taking into account that the projected errors vanish  identically at the  initial step we  get 
	\begin{align*}
		\begin{split}
			\max_n\norm{ \rho^{\sfrac{1}{2}}\be^{n}_h }_{0,\cT_h}^2  &+ \max_n \norm{ \cA_s^{\sfrac12}\bpi^{n}_h }_{0,\Omega_s}^2 + \Delta t  \sum_{n=0}^{L-1} \norm{\cA_f^{\sfrac12} \mean{\bpi^{n}_h}}^2_{0,\Omega_f} + \Delta t  \sum_{n=0}^{L-1}\norm{ \tfrac{(k+1)}{h_\cF^{\sfrac{1}{2}}}\jump{\mean{\ube^{n}_h}}}^2_{0,\partial \cT_h} 
			\\
			& \lesssim \max_n \norm{ \cA_s^{\sfrac12}\bpi^{n}_h }_{0,\Omega_s} \Big(\Delta t\sum_{n=0}^{L-1} \norm{  \cA_s^{\sfrac12} ( \partial_t\bsig(t_n) - \mean{\dot\bsig(t_n)} ) }_{0,\Omega_s} \Big) 
			\\
			 &+ \max_n\norm{ \rho^{\sfrac{1}{2}}\be^{n}_h }_{0,\cT_h} \Big( \Delta t\sum_{n=0}^{L-1} \norm{\rho^{\sfrac{1}{2}}(\partial_t \bu(t_n) -  \mean{ \dot{\bu}(t_{n})})}_{0,\cT_h}   \Big)
		\\
		& + \Delta t\sum_{n=0}^{L-1} \norm{\tfrac{h_\cF^{\sfrac{1}{2}}}{k+1} \mean{\bchi^{n}}\bn}_{0,\partial \cT_h} \norm{ \tfrac{k+1}{h_\cF^{\sfrac{1}{2}}}\jump{\mean{\ube^{n}_h}}}_{0,\partial \cT_h}  
		+   \Delta t\sum_{n=0}^{L-1} \norm{ \bpi^{n}_h }_\cH \norm{\mean{\ubq^n}}_{\underline{\cV}} 
		\\
		&+ \Delta t\sum_{n=0}^{L-1} \norm{ \tfrac{k+1}{h^{\sfrac{1}{2}}_\cF}\jump{\mean{\ubq^{n}}}}_{0,\partial \cT_h} \norm{ \tfrac{k+1}{h^{\sfrac{1}{2}}_\cF}\jump{\mean{\ube^n_h}}}_{0,\partial \cT_h}.
		\end{split}
	\end{align*}
	We notice that 
	\[
		\Delta t\sum_{n=0}^{L-1} \norm{ \bpi^{n}_h }_\cH \norm{\mean{\ubq^n}}_{\underline{\cV}}
		\leq 
		\Big(\max_n \norm{ \cA_s^{\sfrac12}\bpi^{n}_h }_{0,\Omega_s}^2 + \Delta t  \sum_{n=0}^{L-1} \norm{\cA_f^{\sfrac12} \mean{\bpi^{n}_h}}^2_{0,\Omega_f} \Big)^{\sfrac12} \Big(\Delta t\sum_{n=0}^{L-1} 
		\norm{\mean{\ubq^n}}^2_{\underline{\cV}} \Big)^{\sfrac12}.
	\]
	Plugging this estimate back into the previous inequality and applying the Cauchy-Schwartz inequality along with  Young's inequality $2a b \leq \frac{a^2}{\epsilon} +  \epsilon b^2$, where a suitable $\epsilon > 0$ is chosen in each instance, yields \eqref{stabE}. 
	\end{proof}
	
	To handle the time-consistency terms in \eqref{stabE}, we rely on a Taylor expansion centered at $t=t_{n+\sfrac{1}{2}}$, yielding the identity:
	\[
	  \partial_t\varphi(t_n) = \mean{\dot\varphi(t_n)} + \frac{(\Delta t)^2}{16} \int_{-1}^1 \dddot{\varphi} (t_{n+\sfrac{1}{2}} + \tfrac{\Delta t}{2} s) (|s|^2 -1) \,  \text{d}s\quad \forall \varphi \in \cC^3([0, T]), \quad 0\leq n\leq L-1.
	\]
	Therefore, under the additional assumptions $\bsig \in \cC^{3}_{[0,T]}(\cH)$ and $\bu \in \cC^{3}_{[0,T]}(L^2(\Omega,\bbR^d))$, we deduce from \eqref{stabE} that 
	\begin{align}\label{stabET}
		\begin{split}
			  &\max_n\norm{ \rho^{\sfrac{1}{2}}\be^{n}_h }_{0,\cT_h}^2  + \max_n \norm{ \cA_s^{\sfrac12}\bpi^{n}_h }_{0,\Omega_s}^2 + \Delta t  \sum_{n=0}^{L-1} \norm{\cA_f^{\sfrac12} \mean{\bpi^{n}_h}}^2_{0,\Omega_f} + \Delta t  \sum_{n=0}^{L-1}\norm{ \tfrac{(k+1)}{h_\cF^{\sfrac{1}{2}}}\jump{\mean{\ube^{n}_h}}}^2_{0,\partial \cT_h}
			  \\
			&  \qquad\lesssim (\Delta t)^4 \big( \max_{[0,T]}\norm{\dddot{\bsig}}^2_{0,\Omega_s} + \max_{[0,T]}\norm{\dddot{\bu}}^2_{0,\cT_h}\big)
				+ \max_n\norm{\tfrac{h_\cF^{\sfrac{1}{2}}}{k+1} \bchi^{n}\bn}_{0,\partial \cT_h}^2
		+ \max_n  \norm{\ubq^n}^2_{\underline{\cV}}.
		\end{split}
	\end{align}
	
	We can now state the convergence result for the fully discrete scheme \eqref{fd}-\eqref{initial-fd}.
	\begin{theorem}\label{hpConvFD}
	Assume that the solution $(\bsig, \ubu)$ of \eqref{var:golbal}-\eqref{init0} satisfies the time regularity assumptions $\bsig\in H^1_{[0, T]}(H(\bdiv,\Omega,\bbS))\cap\cC^3_{[0,T]}(\cH)$,  $\bu \in \cC^3_{[0,T]}(L^2(\Omega,\bbR^d))\cap \cC^0_{[0,T]}(H^1_0(\Omega,\bbR^d))$ and the piecewise space regularity conditions $\bsig \in \cC^0(H^{1+r}(\Omega_s\cup \Omega_f, \bbS))$ and $\bu \in \cC^0(H^{2+r}(\Omega_s\cup \Omega_f, \bbR^d))$, with $r\geq 0$. Then, there exists a constant $C>0$ independent of $h$ and $k$ such that, for all $k\geq 0$,  
	\begin{align*}
		 &\max_n\norm{\bu(t_n) - \bu_h^n}_{0,\cT_h}  +  \Big(\Delta t  \sum_{n=0}^{L-1} \norm{ \bsig(t_n) - \mean{\bsig_h^n}}^2_{\cH}\Big)^{\sfrac{1}{2}}
		  + \Big(\Delta t \sum_{n=0}^{L-1} \norm{\tfrac{k+1}{h_\cF^{\sfrac{1}{2}}}\jump{\underline i(\bu(t_n)) - \mean{\ubu_h^n}}}^2_{0,\partial \cT_h}\,\text{d}t\Big)^{\sfrac{1}{2}}
		\\
		&\, \leq  C \tfrac{h_K^{\min\{ r, k \}+1}}{(k+1)^{r+\sfrac{1}{2}}} \Big(\max_{[0,T]}\norm*{\btau}_{1+r,\Omega_s\cup \Omega_f}  + \max_{[0,T]}\norm{\bu}_{2+r,\Omega_s\cup \Omega_f}\Big)
		 + C(\Delta t)^2 \big(\max_{[0,T]}\norm{\dddot{\bsig}}_{\cH} + \max_{[0,T]}\norm{\dddot{\bu}}_{0,\cT_h}\big). 
	\end{align*}
	\end{theorem}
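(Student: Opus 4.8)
The plan is to derive Theorem~\ref{hpConvFD} from the fully discrete stability estimate \eqref{stabET} exactly as Theorem~\ref{hpConv} was obtained from Lemma~\ref{stab_sd} in the semidiscrete setting: combine a triangle-inequality splitting of each error quantity with the $hp$-approximation properties of Lemmas~\ref{maintool} and \ref{maintool2}. Existence and uniqueness of $(\bsig_h^n,\ubu_h^n)$ are already granted by the coercivity of the bilinear form underlying \eqref{fd}-\eqref{initial-fd}, and \eqref{stabET} has already absorbed the time-consistency terms into the $(\Delta t)^4$ contribution via the Taylor identity; hence only a careful decomposition of the three left-hand side quantities remains.

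First I would split each total error into a projection part and a projected-error part. For the velocity nodal error I write $\bu(t_n)-\bu_h^n = \bq^n + \be_h^n$; for the stress, using linearity of $\Pi_\cT^k$ (so that $\mean{\Pi_\cT^k\bsig(t_n)} = \Pi_\cT^k\mean{\bsig(t_n)}$), I write $\mean{\bsig(t_n)} - \mean{\bsig_h^n} = \mean{\bchi^n} + \mean{\bpi_h^n}$; and for the skeleton term, using $\jump{\underline i(\bu)}=\mathbf 0$ and linearity of $\underline i$, I get $\jump{\mean{\underline i(\bu(t_n))} - \mean{\ubu_h^n}} = \jump{\mean{\ubq^n}} + \jump{\mean{\ube_h^n}}$. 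Applying the triangle inequality in each norm then cleanly separates the right-hand side into projected-error pieces $\set{\bpi_h^n,\ube_h^n,\be_h^n}$ and projection-error pieces $\set{\bchi^n,\ubq^n,\bq^n}$.

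The projected-error pieces are bounded directly by \eqref{stabET}. Here I would use $\norm{\btau}_\cH^2 = \norm{\cA_s^{\sfrac12}\btau}_{0,\Omega_s}^2 + \norm{\cA_f^{\sfrac12}\btau}_{0,\Omega_f}^2$ and treat the solid and fluid parts separately: the fluid part of $\Delta t\sum_n\norm{\mean{\bpi_h^n}}_\cH^2$ is exactly the third left-hand term of \eqref{stabET}, while the solid part satisfies $\Delta t\sum_n\norm{\cA_s^{\sfrac12}\mean{\bpi_h^n}}_{0,\Omega_s}^2 \le T\max_n\norm{\cA_s^{\sfrac12}\bpi_h^n}_{0,\Omega_s}^2$, i.e. the second term of \eqref{stabET}. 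The nodal velocity term $\max_n\norm{\be_h^n}_{0,\cT_h}^2$ (controlled by the first term since $\rho\ge\rho^->0$) and the jump term $\Delta t\sum_n\norm{\tfrac{k+1}{h_\cF^{\sfrac12}}\jump{\mean{\ube_h^n}}}^2$ (the fourth term) complete the list. Thus every projected-error piece is dominated by the right-hand side of \eqref{stabET}, namely the $(\Delta t)^4(\max\norm{\dddot\bsig}_\cH^2+\max\norm{\dddot\bu}_{0,\cT_h}^2)$ term plus the nodal projection errors $\max_n\norm{\tfrac{h_\cF^{\sfrac12}}{k+1}\bchi^n\bn}^2$ and $\max_n\norm{\ubq^n}_{\underline{\cV}}^2$.

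It then remains to bound all projection errors — both those split off by the triangle inequality and those appearing in the right-hand side of \eqref{stabET} — by the $hp$-estimates. For each fixed $n$ one has $\norm{\mean{\bchi^n}}_\cH\le\max_m\norm{\bchi^m}_\cH$, $\norm{\mean{\ubq^n}}_{\underline{\cV}}\le\max_m\norm{\ubq^m}_{\underline{\cV}}$ and likewise for $\tfrac{h_\cF^{\sfrac12}}{k+1}\mean{\bchi^n}\bn$, so summing against $\Delta t$ and using $\Delta t\sum_{n=0}^{L-1}1=T$ collapses each sum into a maximum over the time nodes; invoking \eqref{tool1} for $\bchi$ (in both $\cH$ and the weighted normal-trace norm), \eqref{tool2} for $\ubq$, and the standard $L^2$-projection estimate for $\bq=\bu-\Pi_\cT^{k+1}\bu$, each such maximum is dominated by $\tfrac{h_K^{\min\{r,k\}+1}}{(k+1)^{r+\sfrac12}}\big(\max_{[0,T]}\norm{\bsig}_{1+r,\Omega_s\cup\Omega_f}+\max_{[0,T]}\norm{\bu}_{2+r,\Omega_s\cup\Omega_f}\big)$ under the stated piecewise regularity. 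Collecting the two groups of bounds and taking square roots gives the claim. I expect the only real obstacle to be organizational rather than analytical: tracking the mismatch between nodal quantities (solid stress and velocity maximum) and midpoint-averaged quantities (fluid stress and skeleton jump) in \eqref{stabET}, and checking that the averaging $\mean{\cdot}$ never degrades the approximation order, which holds because $\norm{\mean{\cdot}}\le\max\norm{\cdot}$ and the averaged projection errors inherit the bounds of Lemmas~\ref{maintool}-\ref{maintool2}; one should also confirm that the $\cC^3$-in-time regularity of $\bsig$ and $\bu$ is precisely what licenses the Taylor identity already used to obtain \eqref{stabET}.
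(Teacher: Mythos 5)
Your proposal is correct and follows essentially the same route as the paper's own proof, which consists exactly of the triangle-inequality splitting into projected errors controlled by \eqref{stabET} and projection errors controlled by the $hp$-estimates \eqref{tool1}--\eqref{tool2}, with the $(\Delta t)^2$ contribution already absorbed into \eqref{stabET} via the Taylor identity. The details you spell out --- reading the stress and jump terms through midpoint averages $\mean{\bsig(t_n)}-\mean{\bsig_h^n}$, splitting $\norm{\cdot}_{\cH}$ into its solid and fluid parts, and using $\norm{\mean{\cdot}}\le\max_m\norm{\cdot}$ together with $\Delta t\,L=T$ to reconcile nodal and midpoint-averaged quantities --- are precisely what the paper's terse proof leaves implicit.
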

	\begin{proof}
		Applying the triangle inequality  we deduce from \eqref{stabET} that 
	\begin{align*}
		&\max_n\norm{\bu(t_n) - \bu_h^n}_{0,\cT_h}  +  \Big(\Delta t  \sum_{n=0}^{L-1} \norm{ \bsig(t_n) - \mean{\bsig_h^n} }^2_{\cH}\Big)^{\sfrac{1}{2}}
		  + \Big( \Delta t \sum_{n=0}^{L-1} \norm{\tfrac{k+1}{h_\cF^{\sfrac{1}{2}}}\jump{\underline i(\bu(t_n)) - \mean{\ubu_h^n}}}^2_{0,\partial \cT_h}\,\text{d}t\Big)^{\sfrac{1}{2}}
		\\
		&\quad \lesssim \max_n  \norm{\bchi^n}_\cH + \max_n\norm{\tfrac{h_\cF^{\sfrac{1}{2}}}{k+1} \bchi^{n}\bn}_{0,\partial \cT_h}^2
		+ \max_n  \norm{\ubq^n}_{\underline{\cV}}
		 + C(\Delta t)^2 \big( \max_{[0,T]}\norm{\dddot{\bsig}}_{\cH} + \max_{[0,T]}\norm{\dddot{\bu}}_{0,\cT_h}\big)
		\end{align*}
		and the result is a direct consequence of the error estimates \eqref{tool1} and \eqref{tool2}.
	\end{proof}

	\section{Numerical results}\label{sec:numresults}

	The numerical results presented in this section have been implemented using the finite element library \texttt{Netgen/NGSolve} \cite{schoberl2014c++}. We validate our HDG scheme by reproducing select numerical results outlined in \cite{Fu2022}, where a different monolithic HDG approach is used to solve the same FSI linear model.

\subsection{Example 1: Validation of the convergence rates} 

In this example, we confirm the decay of error as predicted by Theorem~\ref{hpConvFD} with respect to the parameters $h$, $\Delta t$ and $k$.  We compare the computed solutions at different levels of refinement to an exact solution of problem \eqref{motion}-\eqref{initf} given by 
\begin{align}\label{exactSol}
\begin{split}
	 \bu_f(x,y,t) &:=
	\begin{pmatrix}
		\sin^2(2 \pi x) \sin( \frac{8}{3} \pi (y + 1)) \sin(2 t)
	\\
	-\frac{3}{2} \sin(4 \pi x)  \sin^2( \frac{4}{3} \pi (y + 1) ) \sin(2 t)
	\end{pmatrix} \quad \text{in $\Omega_f\times (0, T]$},
	\\
p(x,y,t) &:= \sin(2\pi x) \sin(2 \pi y) \sin(t) \quad \text{in $\Omega_f\times (0, T]$},
\\
\bd(x,y,t) &:= \begin{pmatrix}
	\sin^2(2\pi x) \sin( \frac{8}{3} \pi (y + 1)) \sin^2(t)
	\\
	-\frac{3}{2} \sin(4\pi x) \sin^2( \frac{4}{3} pi (y + 1) ) \sin^2 (t)
	\end{pmatrix} \quad \text{in $\Omega_s\times (0, T]$},
\end{split}
\end{align}
where $\Omega_f = (0,1)\times (-1, 0)$ and $\Omega_s := (0,1)\times (0, 0.5)$. We assume that the solid medium is isotropic and let the constitutive law \eqref{Constitutive:solid}  be  given in terms of the tensor 
\begin{equation}\label{eq:hooke} 
\cC_s \btau := 2 \mu_s\boldsymbol{\btau} + \lambda_s \tr(\boldsymbol{\btau}) I,
\end{equation}
where $\mu_s>0$ and $\lambda_s>0$ are the Lam\'e coefficients. The source terms $\bF_f$ and $\bF_s$ are chosen such that the manufactured solution \eqref{exactSol} satisfies \eqref{stressFormulation:solid}-\eqref{stressFormulation:fluid} with material parameters given by \eqref{L1} or \eqref{L2}. We observe that the boundary conditions $\bu_f = \mathbf{0}$ on $\Gamma_f$ and $\bu_s = \mathbf{0}$ on $\Gamma_s$ are satisfied, as prescribed in \eqref{stressFormulation:solid}-\eqref{stressFormulation:fluid}. Nevertheless, due to the discontinuity of the global stress tensor $\bsig$ across the interface $\Sigma$, the jump in normal components contributes to the right-hand side with the term
\[
\int_\Sigma (\bsig_f\bn_f + \bsig_s\bn_s)\cdot \hat\bv \, \text{d}s.
\]

In our first test, the parameters are chosen as 
\begin{gather}\label{L1}
	\rho_s = 1, \quad \mu_s = 1, \quad \lambda_s = 1, \quad \rho_f = 1, \quad \mu_f = 1, \quad \lambda_f = 10^6.
\end{gather}
\begin{table}[!htb]
	\centering
	\begin{minipage}{0.47\linewidth}
		\centering
{\footnotesize
\begin{tblr}{hline{1,22} = {1.5pt,solid}, hline{2, 7, 12, 17} = {1pt, solid},
	hline{6, 11, 16, 21} = {dashed}, vline{ 3} = {dashed}, colspec = {c l  c c c},}  
	 \textbf{$k$} & \textbf{$h$} & $\mathtt{e}^{L}_{hk}(\bsig)$ & $\mathtt{e}^{L}_{hk}(\bu)$ & $\mathtt{e}^{L}_{hk}(p)$ 
	 \\ 
	 \SetCell[r=4]{c} 0 
	& 1/8   & 2.26e+00   &   2.35e-01   &     3.09e+00 \\
	& 1/16  & 1.01e+00   &   6.12e-02   &     1.17e+00 \\
	& 1/32  & 4.92e-01   &   1.62e-02   &     5.33e-01 \\
	& 1/64  & 2.40e-01   &   5.10e-03   &     2.46e-01 \\
	 rates &   &    $1.08$     &   $1.84$  & $1.21$ \\ 
	 \SetCell[r=4]{c} 1 
	& 1/8    & 4.37e-01   &   1.01e-02   &     4.29e-01 \\ 
	& 1/16   & 9.51e-02   &   1.18e-03   &     8.93e-02 \\
	& 1/32   & 2.40e-02   &   1.58e-04   &     2.12e-02 \\
	& 1/64   & 5.89e-03   &   2.04e-05   &     4.94e-03 \\ 
	rates  &  & $2.07$ & $2.98$ & $2.14$ \\ 
	 \SetCell[r=4]{c} 2 
	& 1/8    & 7.70e-02   &   5.17e-04   &   4.40e-02 \\ 
	& 1/16   & 8.28e-03   &   2.94e-05   &   4.20e-03 \\
	& 1/32   & 1.08e-03   &   1.78e-06   &   5.19e-04 \\
	& 1/64   & 1.33e-04   &   1.23e-07   &   6.17e-05 \\ 
	rates  &  & $3.06$ & $3.97$ & $3.16$ \\ 
	 \SetCell[r=4]{c} 3 
	& 1/4   & 7.17e-02   &   4.66e-03   &     7.69e-02 \\ 
	& 1/8   & 3.42e-03   &   9.66e-05   &     4.32e-03 \\
	& 1/16  & 1.45e-04   &   2.00e-06   &     1.99e-04 \\
	& 1/32  & 9.11e-06   &   5.94e-08   &     1.29e-05 \\ 
	rates &  & $4.31$ & $5.41$ & $4.18$ \\
\end{tblr}
}	
\caption{Error progression and convergence rates are shown for a sequence of uniform refinements in space and over-refinements in time. The errors are measured at $T=0.3$, by employing the set of coefficients \eqref{L1}. The exact solution is given by \eqref{exactSol}.}
		\label{T1}
\end{minipage}\hfill
\begin{minipage}{0.47\linewidth}
	\centering
{\footnotesize 
\begin{tblr}{hline{1,22} = {1.5pt,solid}, hline{2, 7, 12, 17} = {1pt, solid},
	hline{6, 11, 16, 21} = {dashed}, vline{ 3} = {dashed}, colspec = {c l  c c c},} 
	 \textbf{$k$} & \textbf{$h$} & $\mathtt{e}^{L}_{hk}(\bsig)$ & $\mathtt{e}^{L}_{hk}(\bu)$ & $\mathtt{e}^{L}_{hk}(p)$ \\ 
	 \SetCell[r=4]{c} 1
	& 1/8   & 1.93e+03   &   2.63e+02   &     2.61e+03 \\ 
	& 1/16  & 4.20e+02   &   7.12e+01   &     6.21e+02 \\
	& 1/32  & 7.99e+01   &   1.16e+01   &     1.32e+02 \\
	& 1/64  & 1.66e+01   &   1.58e+00   &     2.53e+01 \\
	 rates &  & $2.28$ & $2.46$ & $2.22$ \\ 
	 \SetCell[r=4]{c} 2
	& 1/8    & 1.59e+02   &   2.98e+01   &     5.73e+02 \\ 
	& 1/16   & 1.74e+01   &   2.94e+00   &     5.13e+01 \\
	& 1/32   & 1.64e+00   &   1.86e-01   &     4.27e+00 \\
	& 1/64   & 1.85e-01   &   1.20e-02   &     3.64e-01 \\ 
	rates  &  & $3.24$ & $3.76$ & $3.54$ \\ 
	\SetCell[r=4]{c} 3
	& 1/4   & 3.32e+02   &   6.72e+01   &     6.78e+02 \\ 
	& 1/8   & 1.89e+01   &   2.97e+00   &     3.42e+01 \\
	& 1/16  & 9.43e-01   &   9.63e-02   &     1.77e+00 \\
	& 1/32  & 4.15e-02   &   2.67e-03   &     8.49e-02 \\
	rates  &  & $4.32$ & $4.87$ & $4.32$ \\ 
	\SetCell[r=4]{c} 4
	& 1/4   & 2.02e+01   &   7.87e+00   &     5.09e+01 \\ 
	& 1/8   & 6.76e-01   &   6.42e-02   &     1.71e+00 \\
	& 1/16  & 1.70e-02   &   1.11e-03   &     4.06e-02 \\
	& 1/32  & 3.83e-04   &   1.43e-05   &     8.39e-04 \\
	rates &  & $5.23$ & $6.35$ & $5.29$ \\ 
\end{tblr}
}
\caption{Error progression and convergence rates are shown for a sequence of uniform refinements in space and over-refinements in time. The errors are measured at $T=0.3$, by employing the set of coefficients \eqref{L2}. The exact solution is given by \eqref{exactSol}.}
\label{T2}
\end{minipage}
\end{table}
The time interval $[0, T]$ is equally divided into sub-intervals of length $\Delta t$. Since the error from the Crank-Nicolson is $O(\Delta t^2)$, we choose $\Delta t \simeq O(h^{(k+2)/2})$  so that the error from the time discretization does not pollute the order of convergence of the space discretization. For tables and figures presenting accuracy verification, we use the following notation for the $L^2-$norms of the errors:  
\begin{equation*}
\mathtt{e}^{L}_{hk}(\bsig) := \norm{\bsig(T) - \bsig_h^L}_{\cH},  \qquad \mathtt{e}^L_{hk}(\bu)  := \norm{\bu(T)  - \bu^L_h}_{0,\Omega}.
\end{equation*}
 The rates of convergence in space are computed as 
\begin{equation}\label{rate}
	\mathtt{r}_{hk}^L(\star)  =\log(\mathtt{e}^L_{hk}(\star)/\tilde{\mathtt{e}}^L_{hk}(\star))[\log(h/\tilde{h})]^{-1}
\quad \star \in \set{\bsig, \bu, p},
\end{equation}
where $\mathtt{e}_{hk}^L(\star)$, $\tilde{\mathtt{e}}^L_{hk}(\star)$ denote errors generated at time $T$ on two consecutive  meshes of sizes $h$ and~$\tilde{h}$, respectively.  

In Table~\ref{T1} we present  the errors, at the final time $T = 0.3$ relative to the mesh size $h$ for four  different polynomial degrees $k$. We also include the arithmetic mean of the experimental rates of convergence obtained via \eqref{rate}. We observe that the convergence in the stress field achieves the optimal rate of $O(h^{k+1})$. Furthermore, Table~\ref{T1} highlights a superconvergence rate of $O(h^{k+2})$ for the velocity. We point out that, the pressure field is postprocessed through the formula
\[
p = -\frac{\lambda_F}{2\mu_F + d \lambda_F} \tr \bsig.
\] 

\begin{table}[h!]
	\begin{minipage}{0.47\linewidth}
	  \centering
	  {\footnotesize
	  \begin{tblr}{hline{1,7} = {1.5pt,solid},
		  hline{2} = {dashed},
		  vline{2} = {dashed},
		  colspec = {lX[c]X[c]X[c]X[c]},}
		  $\Delta t$ & $\mathtt{e}^{L}_{hk}(\bsig)$ & $\mathtt{r}_{hk}^L(\bsig)$ & $\mathtt{e}_{hk}^L(\bu)$ & $\mathtt{r}_{hk}^L(\bu)$ \\
		  1/16 & 3.14e-03 & * & 1.63e-04 & * \\
		  1/32 & 8.10e-04  & 1.96 &  4.19e-05 &  1.96 \\
		  1/64 & 2.05e-04  & 1.98  & 1.06e-05 &  1.99   \\
		  1/128 & 5.16e-05 &  1.99 &  2.67e-06  & 1.98 \\
		  1/256 & 1.29e-05 &  1.99 &  6.85e-07  & 1.96 \\
	  \end{tblr}
	  }
	  \captionof{table}{Computed errors for a sequence of uniform refinements in time with $h=1/20$ and $k=5$. The errors are measured at $t=T=1$, with the coefficients \eqref{L1}. The exact solution is provided by \eqref{exactSol}.}
	  \label{T3}
	\end{minipage}
	\hfill
	\begin{minipage}{0.47\linewidth}
	  \centering
	  \includegraphics[width=0.8\textwidth]{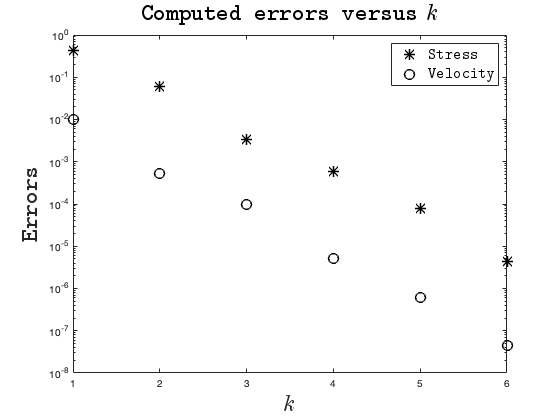}
	  \captionof{figure}{Computed errors versus the polynomial degree $k$ with $h=1/8$ and $\Delta t = 10^{-4}$. The errors are measured at $t=T=0.3$, by employing the coefficients \eqref{L1}. The exact solution is provided by \eqref{exactSol}.}
	  \label{T4}
	\end{minipage}
  \end{table} 

To verify the accuracy and stability of the scheme in the nearly incompressible limit we repeat the same experiment with the set of material parameters
\begin{gather}\label{L2}
	\rho_s = 10^3, \quad \mu_s = 10^6, \quad \lambda_s = 10^{10}, \quad \rho_f = 1, \quad \mu_f = 1, \quad \lambda_f = 10^6.
\end{gather}
The error decay for this case is collected in Table~\ref{T2}.  These results demonstrate the ability of the proposed HDG scheme to produce accurate approximations also in the nearly incompressible elasticity regime.

On the other hand, Table~\ref{T3} portrays the convergence results obtained after fixing the mesh size to $h=1/16$ and the polynomial degree to $k= 5$ and  varying the time step $\Delta t$ used to subdivide the time interval $[0,T]$ uniformly, with $T=0.3$. The rates of convergence in time, are computed as
\[
\mathtt{r}_{hk}^L(\star)  =\log(\mathtt{e}^L_{hk}(\star)/\tilde{\mathtt{e}}^L_{hk}(\star))[\log(\Delta t/\widetilde{\Delta t})]^{-1}\quad \star \in \set{\bsig, \bu, p},
\]
where $\mathtt{e}^L_{hk}$, $\tilde{\mathtt{e}}^L_{hk}$ denote errors generated on two consecutive runs considering time steps $\Delta t$ and~$\widetilde{\Delta t}$, respectively. 
In this example, we consider the same manufactured solution obtained from \eqref{exactSol} with material coefficients \eqref{L1}. The expected convergence rate of $O(\Delta t^2)$ is attained as the time step is refined.

Finally, we fix the space mesh size $h = 0.25$ and the time mesh size $\Delta t = 10^{-6}$ and let $k$ vary from 1 to 6. In Figure~\ref{T4} we report the error $\mathtt{e}^{L}_{hk}(\bsig)$ in the stress variable and the error $\mathtt{e}^{L}_{hk}(\bu)$ in velocity at  $T = 0.3$  as a function of the polynomial degree $k$ in a semi-logarithmic scale. As expected, an exponential convergence is observed.

\subsection{Example 2: Linear hemodynamics problem in two dimensions}

We consider a simplified hemodynamics problem commonly employed as a benchmark to validate linear fluid-structure Interaction (FSI) solvers \cite{bukac2014,bukac2021, Fu2022}. It involves fluid flow in a two--dimensional channel interacting with a thick elastic wall. The fluid and structure domains are defined as $\Omega_f = (0,6)\times (0, 0.5)$ \unit{cm^2} and $\Omega_s = (0, 6)\times (0.5, 0.6)$ \unit{cm^2}, respectively. 

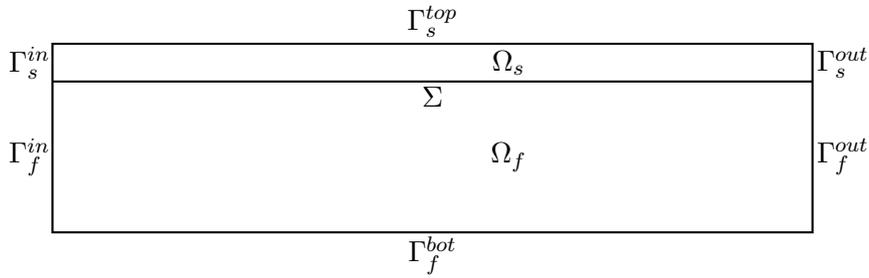
\begin{figure}[h]
	\centering
\begin{tikzpicture}
    
    \draw [thick] (0,0) -- (10,0) -- (10,2.5) -- (0,2.5) -- cycle;
    \draw [thick] (0,2) -- (10,2);
    
    \node at (5,1.8) {$\Sigma$};
	\node at (-0.3,1) {$\Gamma_{f}^{in}$};
	\node at (10.4,1) {$\Gamma_{f}^{out}$};
	\node at (-0.3,2.25) {$\Gamma_{s}^{in}$};
	\node at (10.4,2.25) {$\Gamma_{s}^{out}$};
	\node at (5,-0.3) {$\Gamma_{f}^{bot}$};
	\node at (5,2.8) {$\Gamma_{s}^{top}$};

	\node at (6,1) {$\Omega_{f}$};
	\node at (6,2.25) {$\Omega_{s}$};

\end{tikzpicture}
\caption{Fluid and solid domains.}
\label{FIG1}
\end{figure}

We consider the fluid-structure interaction problem \eqref{stressFormulation:solid}--\eqref{stressFormulation:fluid}, where the source terms $\bF_s$ and $\bF_f$ are assumed to vanish. Additionally, we introduce a modification to the elastodynamic equation within the solid domain as follows: 
\[
	\rho_s\dot{\bu}_s - \bdiv \bsig_s + \beta_s \bd = \mathbf 0  \quad\text{in $\Omega_s\times (0, T]$}.
\]
Here, the term $\beta_s \bd$ represents the linearly elastic spring component, which is used to take into account the radial symmetry of the artery in two dimensions, as detailed in \cite{bukac2014}. We highlight that the inclusion of $\beta_s \bd$ necessitates treating the elastic displacement as a further unknown in our formulation, which requires the introduction of the additional equation $\partial_t \bd = \bu_s$ within the solid domain. We approximate the elastic displacement variable $\bd$ using the same polynomial space employed for the velocity field $\bu_s$, thereby introducing only further locally coupled degrees of freedom.

The values of all the fluid and structure parameters in this example are given by 
\begin{align}\label{sang}
	\begin{split}
		\rho_s &= 1.1\, \unit{g\per cm^3} , \quad \mu_s = 0.575\times 10^6\, \unit{dyne\per cm^2} , \quad \lambda_s = 1.7\times 10^6\, \unit{dyne \per cm^2} 
		\\
		 \rho_f & = 1\, \unit{g \per cm^3} , \quad \mu_f = 1\, \unit{g \per cm.s} , \quad \beta_s = 4\times 10^6\, \unit{dyne \per cm^4}.	
	\end{split}
\end{align}

\begin{figure}[h!]
	\begin{center}
		\includegraphics[width=0.9\textwidth]{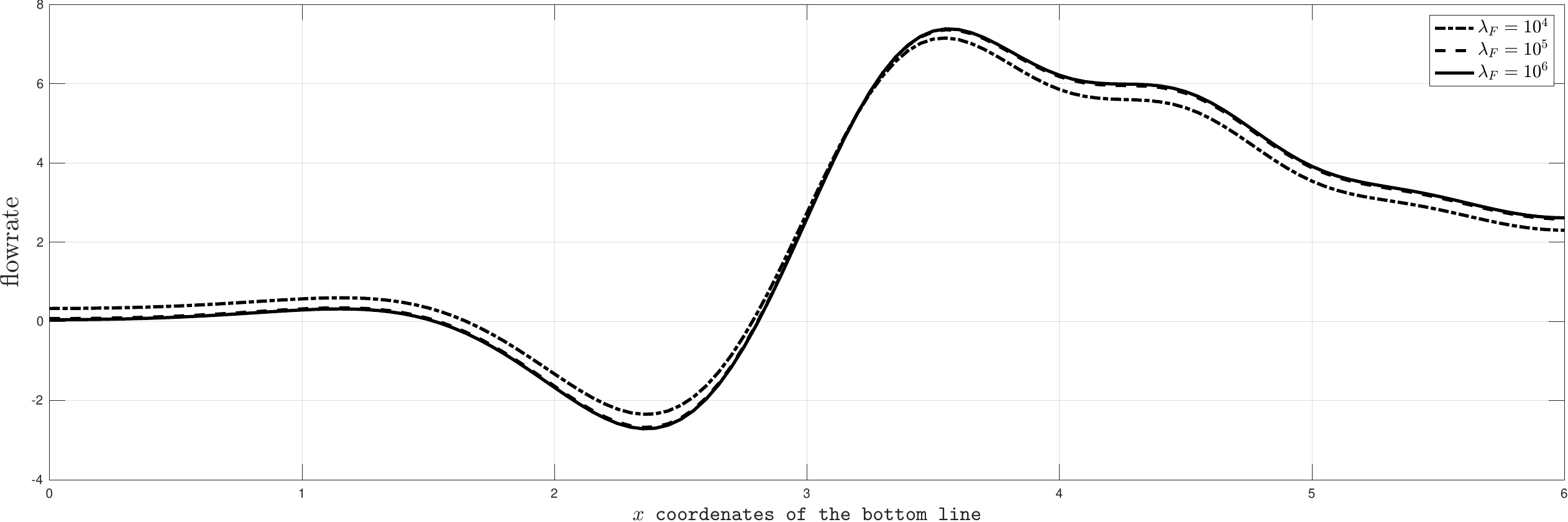}
		\includegraphics[width=0.9\textwidth]{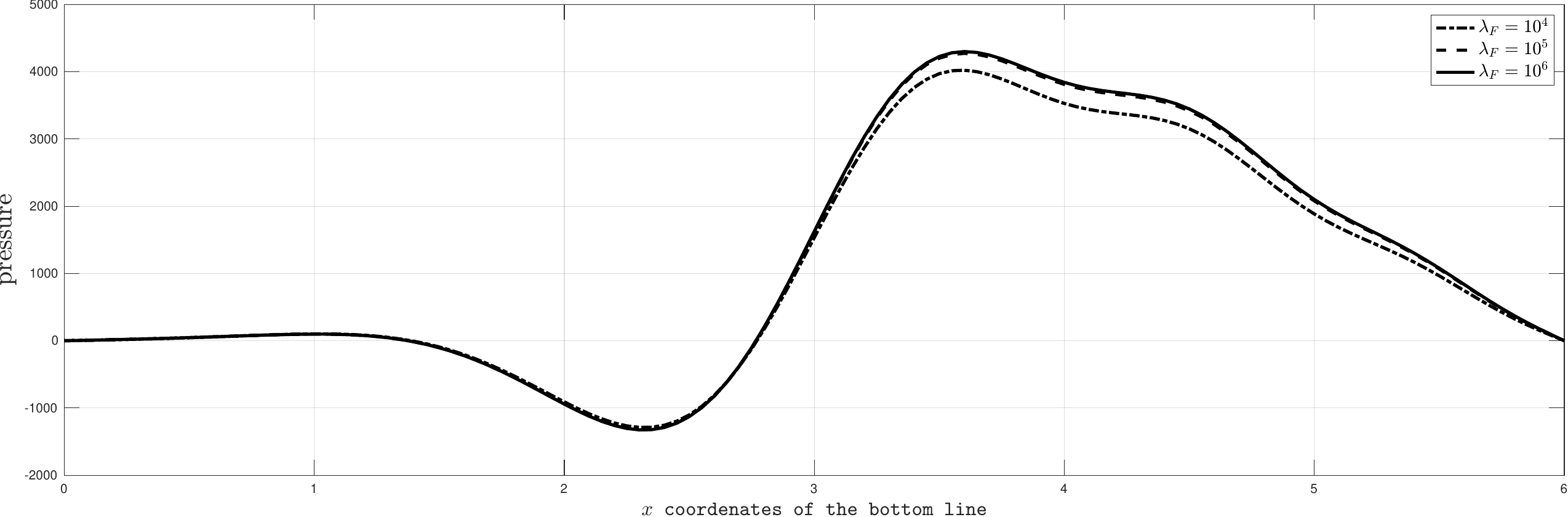}
		\includegraphics[width=0.9\textwidth]{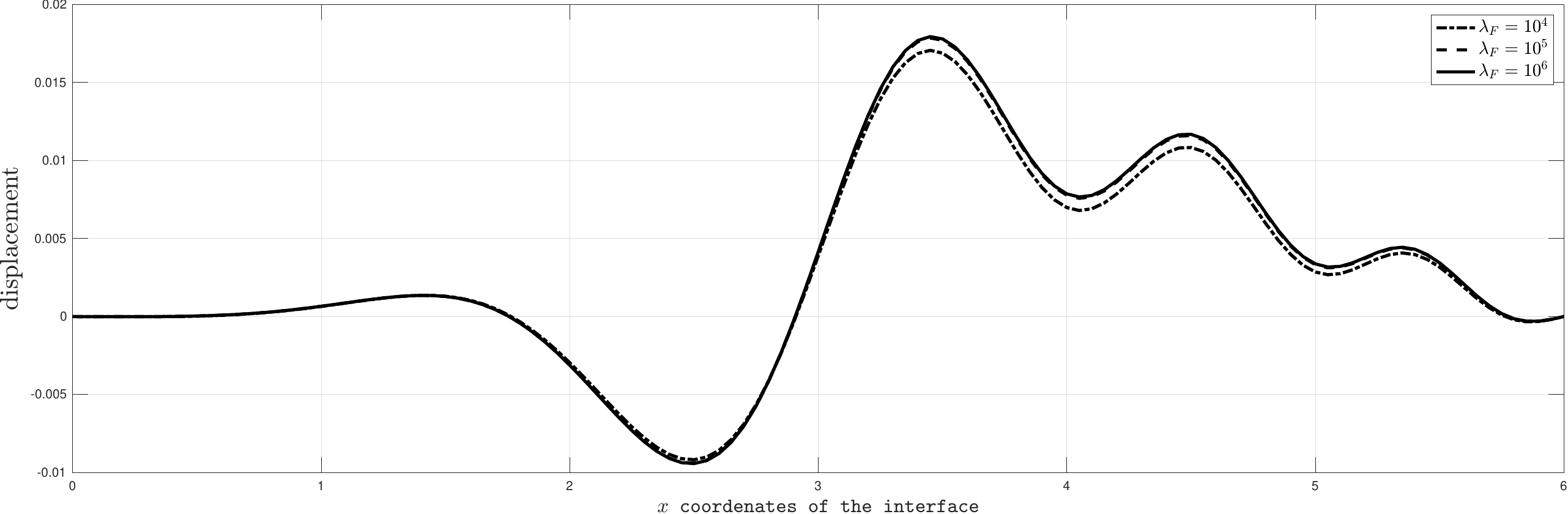}
	\end{center}
	\vspace{-4mm}
	\caption{Numerical solutions of Example 2 at final time $T = 0.012$ \unit{s}, obtained with the discretization parameters $h= 0.1$, $k=2$, $\Delta t = 10^{-4}$, and  different incompressibility parameters $\lambda_f$. Top: $x$--component of  $\frac23 \bu_{f,h}$ along bottom line $\Gamma^{bot}_f$; Middle: pressure along bottom line $\Gamma^{bot}_f$; Bottom: $y$--component of $\bd$ along the interface $\Sigma$.}\label{fig:disp}
\end{figure}

We set zero initial conditions in \eqref{inits} and \eqref{initf} and assume that the flow is driven by a prescribed time-dependent pressure $p_{in}(t)$  at the inlet boundary $x=0$ defined as
\begin{equation}\label{pulse}
	p_{in}(t) = \begin{cases}
		\frac{p_{max}}{2} \left(1 - \cos(\frac{2\pi t}{t_{max}})\right) & \text{if $t\leq t_{max}$}
		\\
		0 & \text{if $t > t_{max}$},
		\end{cases}	
\end{equation}
where $t_{max} = 0.003$ \unit{s} and $p_{max} = 1.333 \times 10^4$ \unit{dyne \per cm^2}. Furthermore, zero pressure is enforced at $x=6$, while a symmetry condition is applied to the lower boundary $y=0$. The structure is clamped at $x=0$ and $x=6$, and zero traction is applied at $y=0.6$. The specific boundary conditions are detailed as follows: (See Figure~\ref{FIG1} for an illustration of the domains and boundary labels.)
\begin{align}\label{bcs}
	\begin{aligned}
		(\bsig\bn_f)\cdot \bn_f &= - p_{in}(t), &  \text{tang}(\bu) &=  \mathbf 0 \quad \text{on $\Gamma_f^{in} = \set{0}\times (0, 0.5)$},
		\\
		(\bsig\bn_f)\cdot \bn_f &= 0, &   \text{tang}(\bu) &= \mathbf 0 \quad \text{on $\Gamma_f^{out} = \set{6}\times (0, 0.5)$},
		\\
		(\bsig\bn_s)\cdot \bn_s &= 0, &  \text{tang}(\bu) &= \mathbf 0 \quad \text{on $\Gamma_{s}^{top} = (0, 6) \times \set{0.6}$},
		\\
		\text{tang}(\bsig\bn_s) &= \mathbf 0,  & \bu\cdot \bn_s &= 0 \quad \text{on $\Gamma_{s}^{bot} = (0, 6) \times \set{0}$},
		\\
		\bu &= \mathbf 0  & &  \qquad \quad \text{on $\Gamma_{s}^{in}\cup \Gamma_{s}^{out} = \set{0} \times (0.5, 0.6) \cup \set{6} \times (0.5, 0.6)$}. 
	\end{aligned}
\end{align}
Here, we use the notation $\text{tang}(\bv) = \bv - (\bv\cdot \bn)\bn$ to denote the tangential component of a vector field on a boundary with unit normal vector $\bn$.

We solve the problem with a mesh size $h= 0.1$, a polynomial degree $k=2$, and a uniform time step $\Delta t = 10^{-4}$. To validate the penalty strategy adopted in the formulation of the Stokes problem, we compare the solutions obtained at time $T = 0.012$ \unit{s} for incompressibility parameters $\lambda_f \in \set{ 10^4, 10^5, 10^6 }$. For each case, we plot in Figure~\ref{fig:disp} the flow rate, calculated as two thirds of the horizontal velocity, following \cite{Fu2022}), and the pressure at the bottom boundary $\Gamma_f^{bot}$, along with the vertical displacement on the interface $\Sigma$. Notably, for the two largest values of $\lambda_f$, the plotted curves exhibit negligible distinctions. Furthermore, our results show good agreement with those presented in \cite[Figure 2]{Fu2022}.

\begin{figure}[h!]
	\begin{center}
	\includegraphics[width=0.6\textwidth]{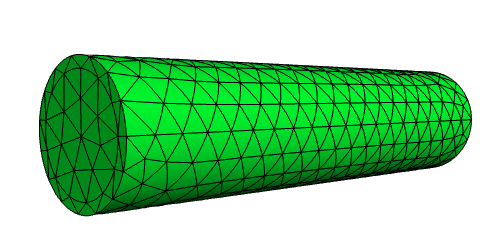}
	\end{center}
	\vspace{-4mm}
	\caption{The computational domain partitioned with a mesh size $h = 0.25$.}\label{mesh}
\end{figure}

\begin{figure}[h!]
	\begin{center}
		\includegraphics[width=1\textwidth]{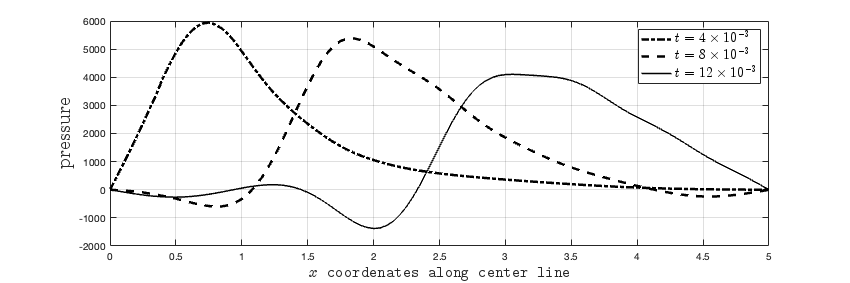}
		\includegraphics[width=1\textwidth]{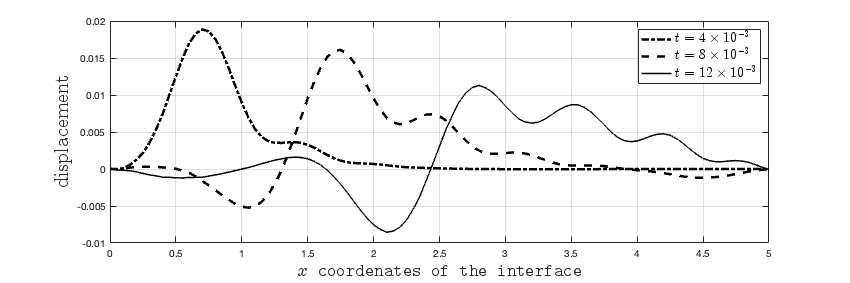}
	\end{center}
	\vspace{-4mm}
	\caption{Numerical solutions of Example 3  at progressively increasing final times. Discretization parameters used: mesh size $h= 0.25$, polynomial degree $k=3$, and time step $\Delta t = 10^{-4}$. Top: Pressure distribution along the center line $\set{(x,0,0)\ 0 < x < 5}$. Bottom: $y$--component of the displacement on the  line $\set{(x, 0.55, 0) : \ 0 <x<5 }$.}\label{fig:disp3D}
\end{figure}

\begin{figure}[h!]
	\begin{center}
	\includegraphics[width=0.7\textwidth]{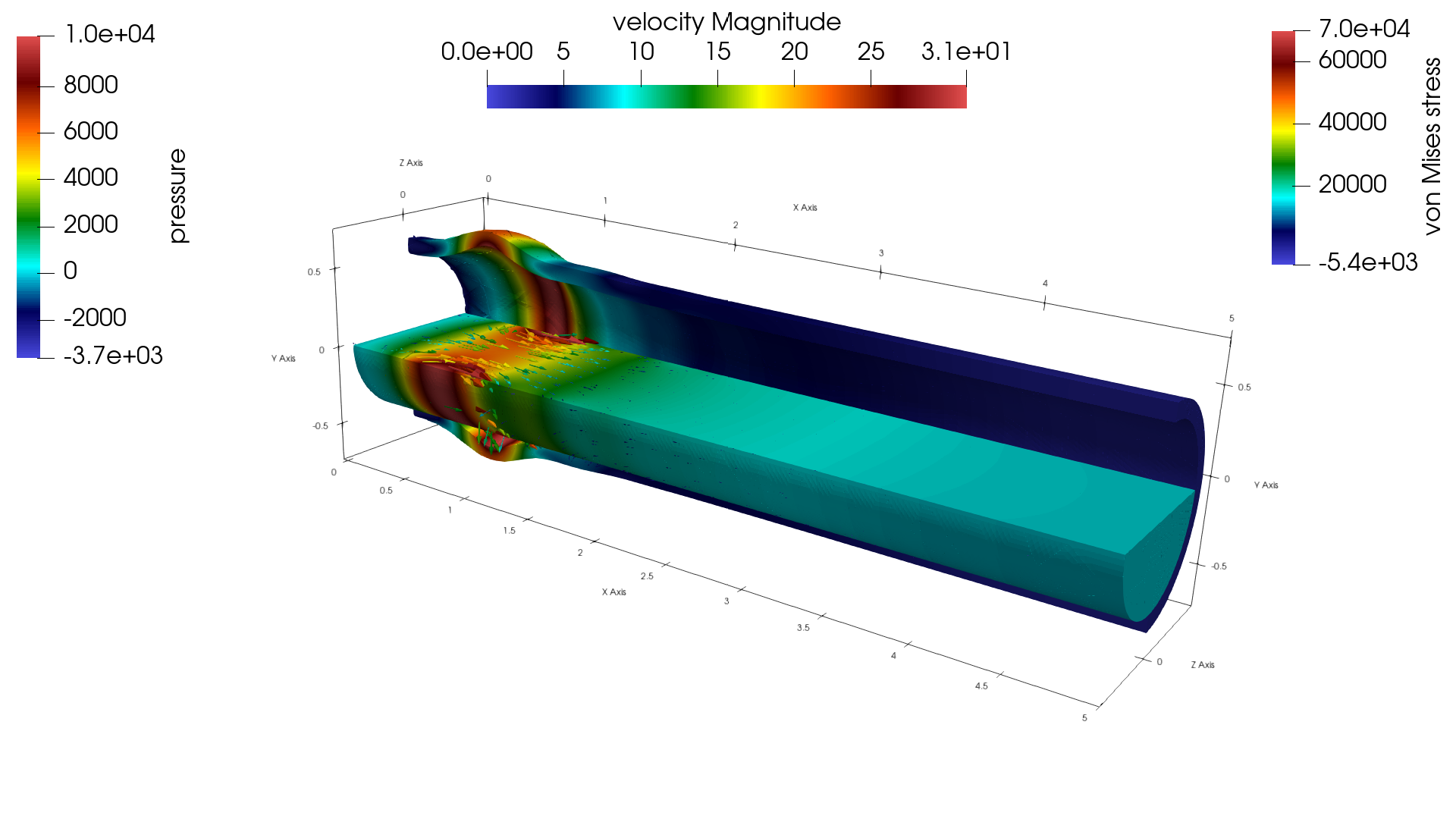}
	\includegraphics[width=0.7\textwidth]{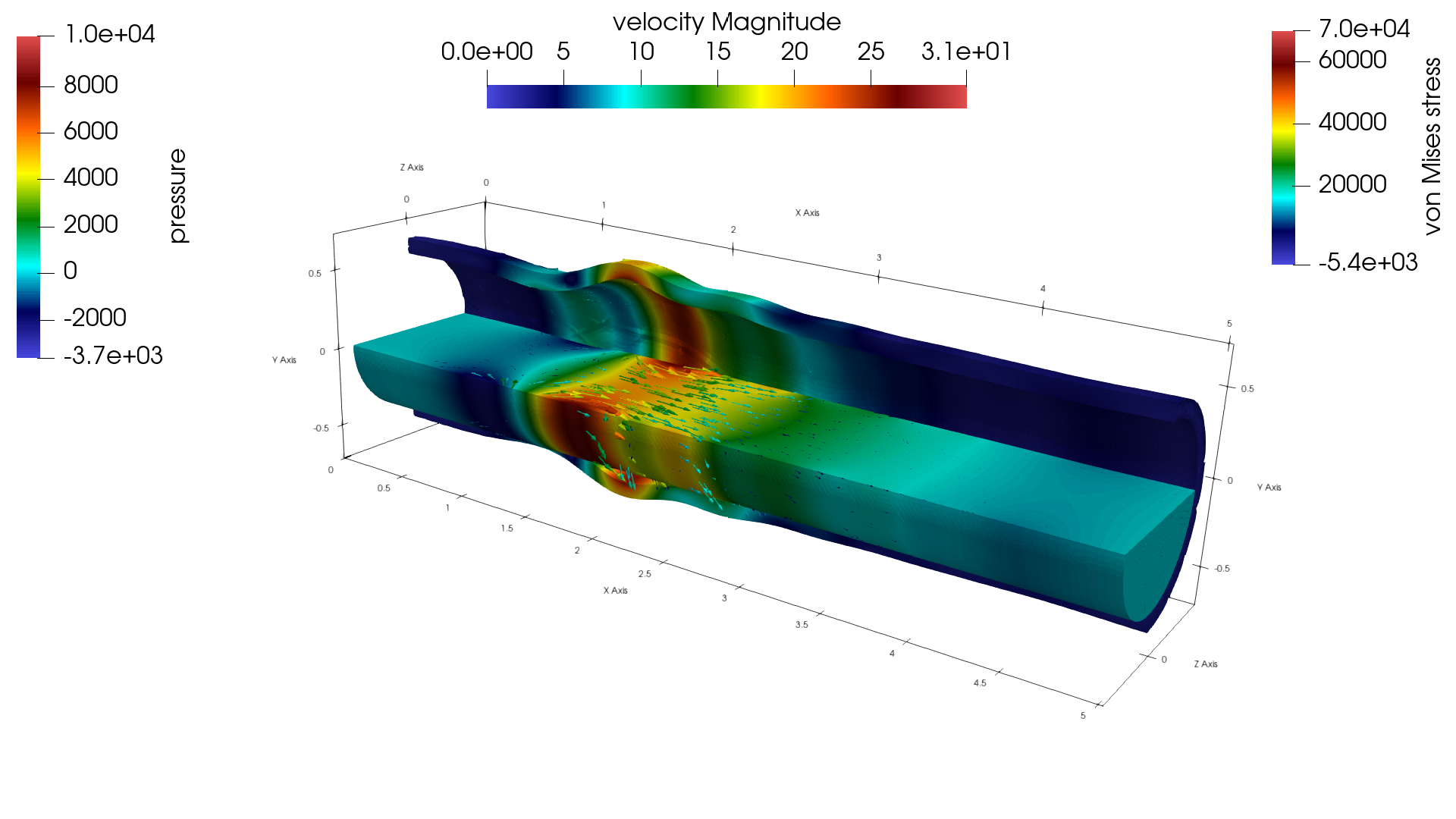}
	\includegraphics[width=0.7\textwidth]{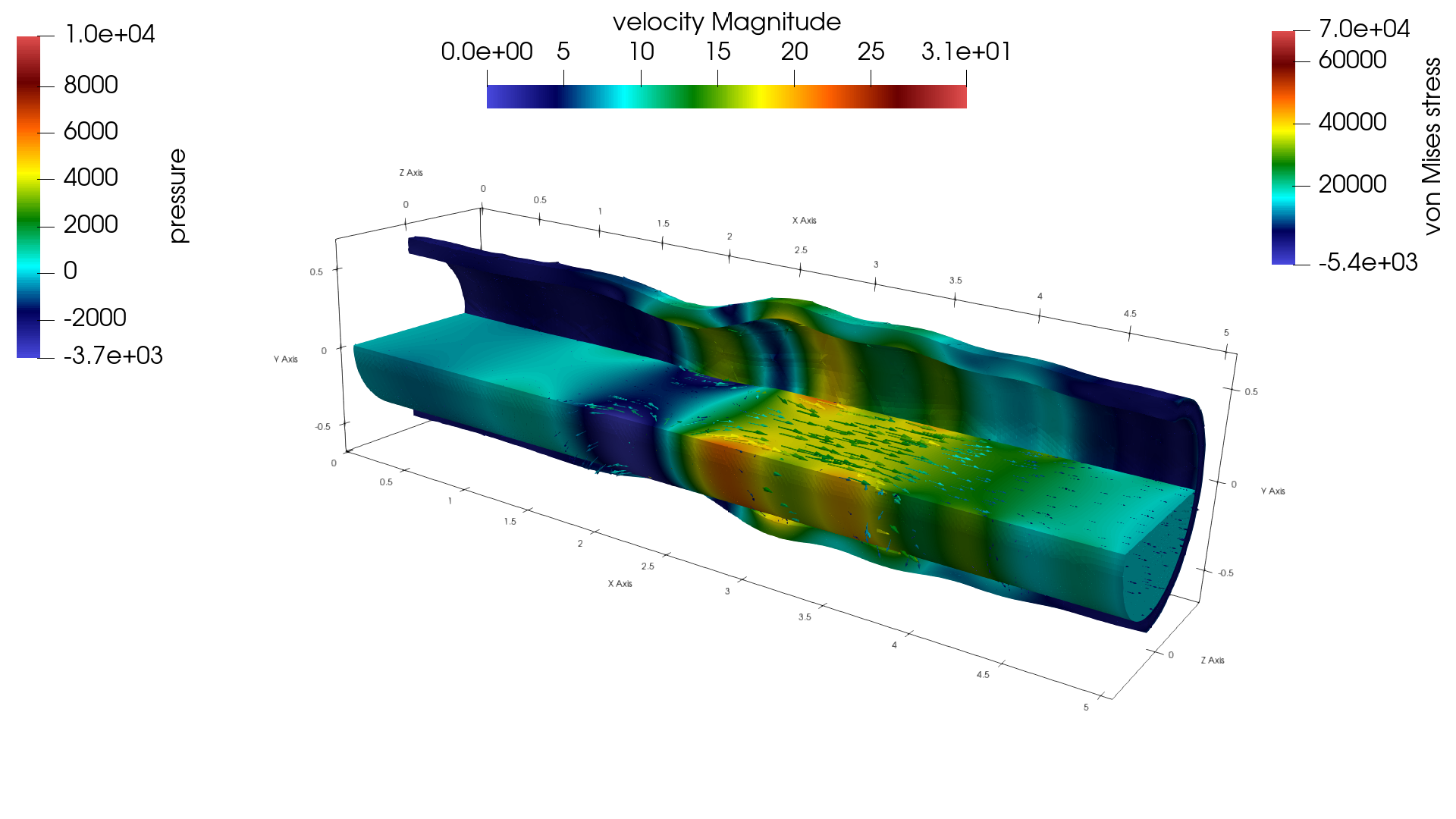}
	\end{center}
	\vspace{-6mm}
	\caption{The figure presents snapshots of the numerical solution for Example 3 at three distinct times: $t = 4\times 10^{-3}$ \unit{s}, $t = 8\times 10^{-3}$ \unit{s},  and $t = 12\times 10^{-3}$ \unit{s}. The solid domain depicts the deformed structure, with the deformation magnified by a factor of 8 for better visualization. The color map within this domain represents the distribution of von Mises stresses. In contrast, the fluid domain utilizes a color map to visualize the pressure distribution. Additionally, the velocity field within the fluid domain is shown using arrows. The results are obtained using a mesh size of $h= 0.25$, a polynomial degree $k=3$, and a time step $\Delta t = 10^{-4}$.}\label{fig:tube3D}
\end{figure}

\subsection{Example 3: Linear hemodynamics problem in three dimensions}

We conclude the numerical experiments by revisiting the problem introduced in Example 2, extending it to three dimensions. We simulate the propagation of a pressure pulse within a straight cylinder with a length of 5 \unit{cm}. The fluid domain $\Omega_f = \set{(x,y,z):\ x\in (0,5), y^2 + z^2 < 0.5^2}$ corresponds to the interior of the cylinder, while the structural part $\Omega_s = \set{(x,y,z):\ x\in (0,5), 0.5^2 < y^2 + z^2 < 0.6^2}$ consists in a tubular region with a thickness of 0.1 \unit{cm}. Consequently, the interface between the fluid and the structure is  given by $\Sigma = \set{(x,y,z):\ x\in (0,5), y^2 + z^2 = 0.5^2}$. 

We choose an incompressibility coefficient $\lambda_f = 10^{6}$ and employ the material parameters defined in equation~\eqref{sang}, excluding the spring component by setting $\beta_s = 0$. The boundary conditions are once again those specified in equation~\eqref{bcs}, with one exception.  A pure Neumann boundary condition, $\bsig \bn = \mathbf 0$ is applied on the top boundary structure  $\Gamma_s^{top} = \set{(x,y,z):\ x\in (0,5),  y^2 + z^2 = 0.6^2}$. This effectively clamps the structure at both ends, while the fluid is driven by the pressure pulse $p_{in}(t)$, defined in \eqref{pulse}, at $\Gamma_f^{in} = \set{(x,y,z):\ x=0,\ y^2 + z^2 = 0.5^2}$. 

We present a numerical solution to problem~\eqref{fd} utilizing the aforementioned initial and boundary conditions. The discretization employs a mesh size $h= 0.25$, a polynomial degree $k=3$, and a uniform time step  $\Delta t = 10^{-4}$. 

Figure~\ref{fig:disp3D} visualizes the pressure distribution along the centerline $\set{(x, 0, 0) :\ 0 <x<5}$, and the $y$-component of the displacement on the  line $\set{(x, 0.55, 0) : \ 0 <x<5 }$ at final times $T \in \set{4 \times 10^{-3}, 8\times 10^{-3}, 12 \times 10^{-3}}$. The approximate displacements within the structure are obtained through post-processing of the velocity field using the formula
\[
  \bd_h^{n+1} = \bd_h^{n} + \Delta t \ (\bu^{n+1}_{s,h} + \bu^{n}_{s,h})/2\quad \text{in $\Omega_s$}.
\]

Figure~\ref{fig:tube3D} further illustrates the propagation of the pressure pulse over time. This figure depicts the deformation and von Mises stress within the structure, alongside the pressure and velocity fields in the fluid domain.

\section{Conclusion}
We have developed a novel velocity/stress-based formulation for a linear Fluid-Structure Interaction problem involving a thick structure. We have established the well-posedness and energy stability of the variational formulation. Additionally, we have introduced a hybridizable discontinuous Galerkin  discretization method  tailored to this problem, and performed an $hp$-convergence analysis of the  semi-discrete scheme. Moreover, we have employed the Crank-Nicolson method for temporal discretization, and examined the convergence properties of the resulting fully-discrete problem. The generalization to more realistic FSI models is under investigation.

\bibliographystyle{plainnat}
\bibliography{cvBib.bib}

\end{document}